\newtheorem{thmi}{Theorem}
\newtheorem{cori}[thmi]{Corollary}
\newtheorem{thm}{Theorem}[section]
\newtheorem{theo}[thm]{Theorem}
\newtheorem{cor}[thm]{Corollary}
\newtheorem{prop}[thm]{Proposition} 
\newtheorem{lem}[thm]{Lemma} 
\newtheorem{lemma}[thm]{Lemma}
 \theoremstyle{definition}
\newtheorem{defi}[thm]{Definition}
\newtheorem{rem}[thm]{Remark}
\newtheorem{remark}[thm]{Remark}
\newtheorem{notation}[thm]{Notation}
\newtheorem{assumption}[thm]{Standing assumptions}
  \def\command@factory#1{%
    \expandafter\def\csname cal#1\endcsname{\mathcal{#1}}
    \expandafter\def\csname frak#1\endcsname{\mathfrak{#1}}
    \expandafter\def\csname scr#1\endcsname{\mathscr{#1}}
    \expandafter\def\csname bb#1\endcsname{\mathbb{#1}}
    \expandafter\def\csname rm#1\endcsname{\mathrm{#1}}
      \expandafter\def\csname bf#1\endcsname{\mathbf{#1}}
  }
\newcommand{\Cay}{\operatorname{Cay}}
\newcommand{\Act}{\operatorname{Act}}
\newcommand{\G}{\Gamma}
\newcommand{\ang}{\sphericalangle}
\begin{document}

\title[Dehn filling Dehn twists]{Dehn filling Dehn twists}
\author[F. Dahmani]{Fran\c{c}ois Dahmani}
\address{Institut Fourier, Univ. Grenoble Alpes, CNRS, Grenoble, France}
\email{francois.dahmani@univ-grenoble-alpes.fr}
\thanks{}
\author[M. Hagen]{Mark Hagen}
\address{School of Mathematics, Univ. Bristol, Bristol, United Kingdom}
\email{markfhagen@posteo.net}
\thanks{Hagen was supported by EPSRC grant EPSRC EP/R042187/1}
\author[A. Sisto]{Alessandro Sisto}
\address{Department of Mathematics, ETH Zurich, Zurich, Switzerland}
\email{sisto@math.ethz.ch}
\thanks{}

\maketitle

\begin{abstract}
Let $\Sigma_{g,p}$ be the genus--$g$ oriented surface with $p$ punctures, with either $g>0$ or $p>3$.  We show that $MCG(\Sigma_{g,p})/DT$ is 
acylindrically hyperbolic where $DT$ is the normal subgroup of the mapping class group 
$MCG(\Sigma_{g,p})$ generated by $K^{th}$ powers of Dehn twists about curves in $\Sigma_{g,p}$ for suitable $K$.

Moreover, we show that in low complexity $MCG(\Sigma_{g,p})/DT$ is in fact hyperbolic. In particular, for $3g-3+p\leq 2$, we show that the mapping class group $MCG(\Sigma_{g,p})$ is fully residually 
non-elementary hyperbolic and admits an affine isometric action with unbounded 
orbits on some $L^q$ space.  Moreover, if every hyperbolic group is residually 
finite, then every convex-cocompact subgroup of $MCG(\Sigma_{g,p})$ is 
separable. 

The aforementioned results follow from general theorems about composite rotating families, 
in the sense of~\cite{D_PlateSpinning}, that come from a collection of subgroups of 
vertex stabilisers for the action of a group $G$ on a hyperbolic graph $X$. We give conditions ensuring that the graph $X/N$ is 
again hyperbolic and various properties of the action of $G$ on $X$ persist for 
the action of $G/N$ on $X/N$.
\end{abstract}

\tableofcontents

\section*{Introduction}
Thurston's Dehn filling theorem has an algebraic counterpart in the
context of relatively hyperbolic groups
\cite{Os-perfill,GrMa-perfill}, which has numerous important
applications such as in the proof of the Virtual Haken conjecture
\cite{vhak} and in the solution of the isomorphism problem for certain
relatively hyperbolic groups \cite{DG:recognize_DF, DT-DecidingIsom}. A Dehn filling of a relatively hyperbolic group $G$ is the quotient of $G$ by the normal closure of normal subgroups $N_i\triangleleft H_i $ of its peripheral subgroups $H_i$, and the Dehn filling theorem for relatively hyperbolic groups says that such quotients are still relatively hyperbolic provided that the $N_i$ are sufficiently ``sparse''.  Mapping class groups are not non-trivially relatively hyperbolic except in very low complexity \cite{AAS:MCG_not_rel_hyp, BDM:thick}, but it is still natural to think of their subgroups generated by Dehn twists around curves in a pants decomposition as peripheral subgroups. Hence, we think of the following theorem (Theorem~\ref{thm:mcg_acyl} below), as a Dehn filling theorem for mapping class groups: 

\begin{thmi}\label{thmi:acyl}
 Suppose $g>0$ or $p>3$, and consider  $\Sigma_{g,p}$, 
 an oriented surface of genus $g$ with $p$ punctures. There exists a positive integer $K_0$ so
 that for all non-zero multiples $K$ of $K_0$,  if  $DT_K$ denotes the
 normal subgroup of the mapping class group $MCG(\Sigma_{g,p})$ generated by all $K^{th}$ 
powers of Dehn twists, then  the group $MCG(\Sigma_{g,p})/DT_K$ is
acylindrically hyperbolic.  
\end{thmi}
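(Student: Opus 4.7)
The plan is to apply the general composite rotating family machinery developed in the body of the paper to an appropriate action of $G = MCG(\Sigma_{g,p})$ on a Gromov-hyperbolic graph $X$. That machinery takes as input the action together with a $G$-equivariant collection $\{R_v\}$ of subgroups, with each $R_v$ contained in the stabiliser of an ``apex'' vertex $v$, subject to rotation and separation axioms. For the present application, the apex vertices are (isotopy classes of) essential simple closed curves $c$ on $\Sigma_{g,p}$, and one takes $R_c := \langle T_c^K \rangle$. The family is $G$-equivariant because conjugating $T_c$ by $g \in G$ yields $T_{g(c)}$, and its normal closure in $G$ is exactly $DT_K$.

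Building $X$ is the first and most delicate step. The bare curve graph $\mathcal{C}(\Sigma_{g,p})$ is not directly usable: the Dehn twist $T_c$ fixes every curve disjoint from $c$ and so acts trivially on the star of $c$, failing the rotation condition outright. The proposed fix is to replace the curve graph by a hyperbolic graph obtained by $G$-equivariantly attaching, at each vertex $c$, a hyperbolic ``horoball-like'' gadget on which $\langle T_c \rangle$ acts cocompactly with unbounded orbits --- for instance a combinatorial horoball built from the $T_c$-orbit on the annular arc graph of $c$, or a cone on that arc graph. The resulting $X$ must then be verified to be hyperbolic and to retain the key features of the action on $\mathcal{C}$, most importantly that pseudo-Anosov elements still act loxodromically and that the action is acylindrical in the relevant sense. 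Bowditch's acylindricity of the $MCG$ action on the curve graph is the main quantitative input.

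With $X$ in hand, the composite rotating family axioms must be checked for the $R_c$. The rotation axiom --- that non-trivial elements of $R_c$ move far-away points by a large amount --- becomes the assertion that $T_c^K$ translates substantially on the horoball attached at $c$; this holds once $K$ is a sufficiently large multiple of some fixed $K_0$, extracted from the hyperbolicity and acylindricity constants of $X$. The separation/angle axiom between two distinct apex vertices $c, c'$ follows from acylindricity combined with the fact that $\mathrm{Stab}_G(c) \cap \mathrm{Stab}_G(c')$ is finite when $c$ and $c'$ intersect, so that distinct rotation subgroups interact only in controlled ways.

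Once the axioms are in place, the general composite rotating family theorem from the body of the paper gives that $X/DT_K$ is hyperbolic and that $G/DT_K$ acts on it with dynamical properties inherited from the original action. To conclude acylindrical hyperbolicity, the final step is to exhibit a loxodromic element with the WPD property. Starting from a pseudo-Anosov $f \in MCG$ whose axis in $X$ stays uniformly far from the attached horoballs (such $f$ exist because generic pseudo-Anosovs have uniformly bounded subsurface projections to annular complexes), the persistence statements of the machinery promote $f$ to an element of $G/DT_K$ acting loxodromically with WPD on $X/DT_K$. The hypothesis $g > 0$ or $p > 3$ secures the existence of two independent pseudo-Anosovs whose images remain independent in the quotient, so $G/DT_K$ is not virtually cyclic, and Osin's characterisation then yields acylindrical hyperbolicity. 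The main obstacle, as indicated, is the construction of $X$ together with the quantitative verification of the rotation and separation axioms; once that is done, the remaining steps are essentially formal applications of the machinery.
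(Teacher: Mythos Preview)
Your proposal rests on a misreading of the paper's machinery. The central claim that ``the bare curve graph $\mathcal C(\Sigma_{g,p})$ is not directly usable'' because $T_c$ fixes the star of $c$ is exactly backwards: in the definition of a composite rotating family on a composite projection graph, it is a \emph{requirement} that $\Gamma_s$ fix every vertex at distance $1$ from $s$ in $X$. The word ``rotation'' in a composite rotating family does not refer to rotation in the hyperbolic space $X$; it is measured by the projection distances $d^\ang_y$ coming from the composite projection system --- here, annular subsurface projection. The large-rotation axiom says that for $g\in\Gamma_y\setminus\{1\}$ and $x,z$ close in $d_y$, the quantity $d_y(x,gz)$ is at least $\Theta_{rot}$. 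For $\Gamma_y=\langle T_y^K\rangle$ this holds once $K$ is large precisely because $T_y^K$ shifts annular projections by roughly $K$. Thus the paper applies Theorem~\ref{thm:quotient_proj_graph} directly to $X=\mathcal C(\Sigma_{g,p})$, with the composite projection system built from subsurface projections as in Proposition~\ref{prop:mcg_cpg} (following~\cite{D_PlateSpinning,BBF}); no modification of the space is needed, and the quantitative inputs are the Behrstock inequality and BGIT rather than Bowditch acylindricity.

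The construction you sketch --- attaching horoballs or cones at curves so that Dehn twists genuinely rotate by large angles in a hyperbolic space --- is in spirit the \emph{very rotating family} framework of~\cite{DGO}, which is a different machinery with different hypotheses. The paper's Theorem~\ref{thm:quotient_proj_graph} does not apply to such a space (its vertex set would no longer carry the composite projection system structure), so you would have to build $X$, prove its hyperbolicity, and verify the DGO axioms from scratch. This is substantially harder --- indeed, the difficulty of making Dehn twists into very rotating families in the DGO sense is exactly what motivated the composite framework of~\cite{D_PlateSpinning} in the first place --- and your proposal does not carry out any of it. As written, then, the argument has a genuine gap: it invokes the paper's theorem on a space to which that theorem does not apply, while the space to which it does apply is dismissed on the basis of a misunderstood axiom.
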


\subsection*{Residual properties of mapping class groups in low complexity}
Recall that the complexity of $\Sigma_{g,p}$ is defined as $\xi(g,p)=3g+p-3$. 
In low 
complexity, we can press a bit further and study residual properties of mapping 
class groups.  First, we produce many hyperbolic quotients. Recall that, given a class of groups $\mathcal P$, a group $G$ is fully residually $\mathcal P$ if for every finite set $F\subseteq G-\{1\}$ there exists a group $H$ in $\mathcal P$ and a surjective homomorphism $\phi:G\to H$ with $\phi(F)\subseteq H-\{1\}$.

\begin{thmi}\label{thmi:resid}
Suppose that $(g,p)\in\{(0,5),(1,2)\}$. Then $MCG(\Sigma_{g,p})$ is fully residually non-elementary 
hyperbolic.
\end{thmi}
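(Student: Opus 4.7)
The plan is to derive Theorem~\ref{thmi:resid} from Theorem~\ref{thmi:acyl}, together with its low-complexity strengthening announced in the abstract (that the quotient $MCG(\Sigma_{g,p})/DT_K$ is in fact hyperbolic when $\xi(g,p)\le 2$), plus a ``Greendlinger-type'' escaping lemma for the normal subgroups $DT_K$.

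Fix a finite subset $F\subseteq MCG(\Sigma_{g,p})\setminus\{1\}$. I would aim to produce a multiple $K$ of $K_0$ (depending on $F$) such that both of the following hold:
\begin{enumerate}
 \item[(i)] $Q_K:=MCG(\Sigma_{g,p})/DT_K$ is a non-elementary hyperbolic group;
 \item[(ii)] $F\cap DT_K=\emptyset$.
\end{enumerate}
The quotient map $MCG(\Sigma_{g,p})\to Q_K$ then witnesses the desired property for $F$.

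For (i), hyperbolicity is exactly the low-complexity upgrade of Theorem~\ref{thmi:acyl} stated in the abstract, applied in the cases $\xi(g,p)=2$. Non-elementariness of $Q_K$ I would obtain from the existence of two independent pseudo-Anosov elements $\alpha,\beta\in MCG(\Sigma_{g,p})$, together with a ``persistence of loxodromics'' lemma in the composite rotating family framework: since $\alpha$ and $\beta$ act loxodromically with independent axes on the hyperbolic graph $X$ to which the paper's general machinery is applied, their images act loxodromically on $X/DT_K$ and remain independent, provided $K$ is large enough in comparison with the distance between the quasi-axes. This yields two independent infinite-order elements in $Q_K$, ruling out the elementary cases.

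For (ii), which I expect to be the main obstacle, the task is to show that for $K$ large enough (depending on $F$), the normal subgroup $DT_K$ avoids $F$. The trouble is that $DT_K$ is the normal closure of the $K$-th powers of Dehn twists about \emph{all} simple closed curves, not just finitely many, so one cannot reduce to a single relatively hyperbolic Dehn filling. However, the composite rotating family machinery from~\cite{D_PlateSpinning} that underlies Theorem~\ref{thmi:acyl} is designed precisely for this situation: it should deliver a normal form / Greendlinger-type statement of the form ``any non-trivial element of $DT_K$ has a specific geometric signature on $X$ whose complexity grows with $K$'' (e.g., it either has translation length at least some function of $K$ on $X$, or else lies in a conjugate of a specific subgroup of a vertex stabiliser containing a $K$-th power of a twist). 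Either alternative fails for elements of the fixed finite set $F$ once $K$ is sufficiently large, giving $F\cap DT_K=\emptyset$.

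Combining (i) and (ii), for any $F$ one chooses $K$ that is a multiple of $K_0$, larger than the threshold from (ii), and large enough for (i); the resulting $Q_K$ is a non-elementary hyperbolic group receiving a surjection from $MCG(\Sigma_{g,p})$ that is injective on $F$. Since $F$ was arbitrary, $MCG(\Sigma_{g,p})$ is fully residually non-elementary hyperbolic. The routine work is all in Step (i); the conceptual core, and the place where the paper's general theorems about composite rotating families must be invoked most carefully, is the escape statement underlying (ii).
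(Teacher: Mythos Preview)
Your high-level structure is exactly that of the paper: produce a non-elementary hyperbolic quotient $G_K=MCG(\Sigma_{g,p})/DT_K$ (this is Theorem~\ref{thm:mcg_quotients}) and show that any fixed nontrivial element survives for $K$ large (this is Lemma~\ref{lem:first_quotient_survive}); Corollary~\ref{cor:res_hyp} then concludes. Your treatment of (i) matches the paper's.

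The gap is in your mechanism for (ii). The dichotomy you propose for nontrivial elements of $DT_K$ --- ``large translation length on $X$, or else in a conjugate of a specific subgroup of a vertex stabiliser containing a $K$-th power of a twist'' --- does not by itself exclude elements of $F$. A reducible $x\in F$ can fix a curve $\gamma$, so its translation length on $\mathcal C(\Sigma_{g,p})$ is zero, and it genuinely sits in $\mathrm{Stab}(\gamma)$; the question is precisely whether $x$ lies in $DT_K\cap\mathrm{Stab}(\gamma)$. Your proposal gives no reason why this second alternative fails for such $x$ as $K$ grows. The paper closes this with two further ingredients you do not mention: Proposition~\ref{prop:stab}, which identifies $DT_K\cap\mathrm{Stab}(\gamma)$ with the subgroup generated by $K$-th powers of twists supported on $\Sigma_{g,p}\setminus\gamma$, and then an \emph{induction on complexity} (Lemma~\ref{lem:first_quotient_survive}), reducing the survival of $x$ to the same question one complexity level down. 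Finite-order elements are handled separately by showing $DT_K$ is torsion-free via Theorem~\ref{theo;statement_that_should_have_been_in_D_PS}. So the Greendlinger-type input (Corollary~\ref{cor:short_less_complex}) is used, but only indirectly: it feeds into Proposition~\ref{prop:stab} and into the loxodromic case, while the real work for reducible elements is the Nielsen--Thurston case split plus the inductive descent through subsurfaces.
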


It was not previously known whether either of $MCG(\Sigma_{0,5})$ or 
$MCG(\Sigma_{1,2})$ admits an infinite 
hyperbolic quotient.

We deduce 
Theorem~\ref{thmi:resid} from a different statement, 
Theorem~\ref{thm:mcg_quotients}, which is interesting even in complexity $1$.  
Specifically, for $(g,p)\in\{(0,4),(1,0),(1,1)\}$, this says that 
$MCG(\Sigma_{g,p})/DT_K$ is hyperbolic for all suitably large multiples $K$ of the constant $K_0$ from 
Theorem~\ref{thm:mcg_quotients}, and when 
$(g,p)\in\{(0,5),(1,2)\}$, the quotient $MCG(\Sigma_{g,p})/DT_K$ is hyperbolic relative 
to subgroups isomorphic to $MCG(\Sigma_{g',p'})/DT_K$ with $(g',p')\in\{(0,4),(1,1)\}$ 
and is therefore again hyperbolic.

From Theorem~\ref{thmi:resid} and results of Yu and Nica (see
also Alvarez and Lafforgue), we obtain:

\begin{cori}\label{cori:l_q}
 Suppose that $(g,p)\in\{(0,5),(1,2)\}$. Then $MCG(\Sigma_{g,p})$ admits an affine isometric action 
with unbounded orbits on some $L^q$ space.
\end{cori}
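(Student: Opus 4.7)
The plan is to combine Theorem~\ref{thmi:resid} with a standard ``pullback'' argument using a theorem of Yu (later refined by Nica, and also approached differently by Alvarez--Lafforgue) that every non-elementary hyperbolic group admits a proper affine isometric action on some $L^q$ space, for $q$ large enough. Since a proper action of an infinite group has unbounded orbits, pulling back such an action via any surjection onto a non-elementary hyperbolic quotient will yield an affine isometric action of the original group with unbounded orbits.

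More precisely, I would proceed as follows. First, by Theorem~\ref{thmi:resid}, for $(g,p)\in\{(0,5),(1,2)\}$ the group $G=MCG(\Sigma_{g,p})$ is fully residually non-elementary hyperbolic; applying this to any singleton $F=\{\gamma\}$ with $\gamma\neq 1$ furnishes a surjective homomorphism $\phi:G\twoheadrightarrow H$ onto a non-elementary hyperbolic group $H$ (which is therefore in particular infinite). Second, I would invoke Yu's theorem (or Nica's improvement) to fix some $q\in[1,\infty)$ and a proper affine isometric $H$-action $\rho:H\to\mathrm{Isom}(\mathcal E)$ on a suitable $L^q$ space $\mathcal E$. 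Third, I would define the desired $G$-action as the composition $\rho\circ\phi:G\to\mathrm{Isom}(\mathcal E)$; for any basepoint $v\in\mathcal E$, the orbit $G\cdot v$ coincides with $H\cdot v$, which is unbounded because $H$ is infinite and $\rho$ is proper.

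There is no real obstacle here: the content of the corollary lies entirely in Theorem~\ref{thmi:resid} (which supplies the non-elementary hyperbolic quotient) together with the cited external results on $L^q$-actions of hyperbolic groups. The only point requiring minor care is to make sure the quotient produced by Theorem~\ref{thmi:resid} is genuinely infinite, which is automatic since ``non-elementary hyperbolic'' excludes finite and two-ended groups, and to record that properness of $\rho$ is more than is needed: one only requires that $\rho$ have at least one unbounded orbit, which is immediate whenever $H$ is infinite and $\rho$ is proper.
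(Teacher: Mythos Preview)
Your proof is correct and follows essentially the same approach as the paper: use Theorem~\ref{thmi:resid} to obtain a surjection onto a non-elementary hyperbolic quotient, invoke the results of Yu, Nica, and Alvarez--Lafforgue to obtain a proper affine isometric action of the quotient on some $L^q$ space, and pull back. The paper's argument is identical in content, merely more terse.
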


Specifically, Theorem~\ref{thmi:resid} yields a non-elementary hyperbolic 
quotient $Q$, which in turn admits a proper affine isometric action on 
$L^q(\partial Q\times \partial Q)$ and $\ell^q(Q\times Q)$,
by~\cite{Nica,Yu, AlLa}, 
for sufficiently large $q$. 

This is related to the 
question of which mapping class groups have property (T) because, if $Q$ can be 
chosen as above so that one could take $q=2$, we would get an affine isometric 
action of $MCG(\Sigma_{g,p})$, with unbounded orbits, on a Hilbert space.

Recall from~\cite{FarbMosher} that a subgroup $H\leq MCG(\Sigma_{g,p})$ is 
\emph{convex-cocompact} if some (hence any) $H$--orbit in the Teichm\"uller 
space of $\Sigma_{g,p}$ is quasiconvex.  
There are several equivalent characterisations of convex-cocompactness, see~\cite{KentLeininger:convex_cocompact, Hamenstadt, 
DurhamTaylor:stability},  and one reason this notion is interesting is its connection with hyperbolicity of fundamental 
groups of surface bundles over surfaces \cite{FarbMosher,Hamenstadt}.

In~\cite{Reid:separable}, Reid posed the question of whether convex-cocompact subgroups of 
$MCG(\Sigma_{g,p})$ are separable. Recall that a subgroup $H<G$ is separable if for every $x\in G-H$ there exists a finite group $F$ a surjective homomorphism $\phi:G\to F$ with $\phi(x)\notin \phi(H)$.

Note that in general $MCG(\Sigma_{g,p})$ contains non-separable subgroups, and in fact this is already the case for 
$MCG(\Sigma_{0,5})$~\cite{LeiningerMcreynolds}.   (Nonetheless, various 
geometrically natural subgroups, e.g. curve-stabilisers, are known to be 
separable in $MCG(\Sigma_{g,p})$~\cite{LeiningerMcreynolds}.)  

The techniques of the present paper engage with this question in a somewhat 
mysterious way:

\begin{thmi}\label{thmi:convex_cocompact}
 Assume that all hyperbolic groups are residually finite.  Suppose 
$(g,p)\in\{(0,5),(1,2)\}$. Then any convex-cocompact subgroup 
$Q<MCG(\Sigma_{g,p})$ is separable.
\end{thmi}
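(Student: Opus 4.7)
The plan is to combine the hyperbolicity of $MCG(\Sigma_{g,p})/DT_K$ from Theorem~\ref{thm:mcg_quotients} (available in complexity $2$) with the theorem of Agol--Groves--Manning: the hypothesis that every hyperbolic group is residually finite implies that every quasiconvex subgroup of every hyperbolic group is separable. Under this umbrella, separating $x\in MCG(\Sigma_{g,p})\setminus Q$ from the convex-cocompact subgroup $Q$ reduces to separating images in a hyperbolic quotient.

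Given such $Q$ and $x$, I would first choose a positive multiple $K$ of $K_0$ large enough that (a) the quotient $\bar G:=MCG(\Sigma_{g,p})/DT_K$ is hyperbolic, and (b) the image $\bar x$ of $x$ in $\bar G$ lies outside the image $\bar Q$ of $Q$. Item~(a) is provided by Theorem~\ref{thm:mcg_quotients}. For item~(b), note that $\bar x\in\bar Q$ means $x\in Q\cdot DT_K$; as $K$ varies over large multiples of $K_0$ the normal subgroups $DT_K$ become sparser, and one can avoid any prescribed element $x\notin Q$ by the same style of residual argument that underlies the fully-residually-hyperbolic statement of Theorem~\ref{thmi:resid}.

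Next, I would verify that $\bar Q$ is a quasiconvex subgroup of the hyperbolic group $\bar G$. Convex-cocompactness of $Q$ in $MCG(\Sigma_{g,p})$ means, by the characterisations of Kent--Leininger and Hamenst\"adt, that $Q$ acts cocompactly on some quasiconvex subgraph of the curve graph of $\Sigma_{g,p}$. The composite rotating family machinery developed in the body of the paper is designed precisely to transfer geometric properties, in particular quasiconvexity of orbits, from the $G$--action on a hyperbolic graph $X$ to the $G/N$--action on the hyperbolic quotient $X/N$. Applied to the $Q$--invariant quasiconvex subgraph, this yields that $\bar Q$ acts quasiconvexly on the hyperbolic quotient graph, hence is quasiconvex in $\bar G$.

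Finally, by the Agol--Groves--Manning consequence of the residual-finiteness hypothesis, the quasiconvex subgroup $\bar Q<\bar G$ is separable, so there exists a finite quotient $\bar G\twoheadrightarrow F$ whose image of $\bar x$ lies outside the image of $\bar Q$. Composing with the surjection $MCG(\Sigma_{g,p})\twoheadrightarrow\bar G$ produces the required finite quotient of $MCG(\Sigma_{g,p})$ separating $x$ from $Q$. The \emph{main obstacle} is the quasiconvexity step for $\bar Q$: once quasiconvex orbits can be shown to survive in $X/DT_K$ via the composite rotating family framework, the remaining argument is a formal assembly of Theorem~\ref{thm:mcg_quotients}, the curve-graph characterisation of convex-cocompactness, and Agol--Groves--Manning.
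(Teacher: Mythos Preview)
Your outline matches the paper's strategy exactly: pass to the hyperbolic quotient $G_K=MCG(\Sigma_{g,p})/DT_K$, show that the image $\bar Q$ is quasiconvex and that $\bar x\notin\bar Q$, and then invoke Agol--Groves--Manning. Two points deserve comment, and one of them is a genuine gap.

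\textbf{The quasiconvexity step.} You correctly flag this as the obstacle, but the mechanism is sharper than ``transferring quasiconvexity via the rotating-family machinery.'' The relevant characterisation of convex-cocompactness is not the quasiconvex-subgraph one but the bounded-projections one (Kent--Leininger, Durham--Taylor): there is a constant $C$ so that $d_s(v_0,hv_0)\le C$ for every $h\in Q$ and every curve $s$. This is precisely the hypothesis of Lemma~\ref{lift;segments_witout_angles}, so once $\Theta_{rot}$ is large relative to $C$, each geodesic $[v_0,hv_0]$ maps \emph{isometrically} into $X_K$. This gives the quasi-isometric embedding $Q\to X_K$ (with constants independent of $K$), hence quasiconvexity of $\bar Q$ in $G_K$ via the equivariant quasi-isometry of Theorem~\ref{thm:mcg_quotients}.

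\textbf{The gap in item~(b).} Your claim that ``one can avoid any prescribed element $x\notin Q$ by the same style of residual argument'' is where you are skipping a real step. The residual statement (Lemma~\ref{lem:first_quotient_survive}) kills a \emph{finite} set of nontrivial elements, but $\bar x\in\bar Q$ means $xq^{-1}\in DT_K$ for \emph{some} $q\in Q$, and $Q$ is infinite. The paper's Proposition~\ref{prop:convex_cocompact} handles this by first observing (from the previous paragraph) that the orbit map $Q\to X_K$ is a quasi-isometric embedding with constants \emph{independent of $K$}. Hence the set of $q\in Q$ with $d_{X_K}(\Psi_K(q)\bar v_0,\bar v_0)\le d_{X}(v_0,xv_0)$ lies in a fixed finite ball in $Q$, independent of $K$. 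Only those finitely many $q$ can satisfy $\Psi_K(x)=\Psi_K(q)$, and now the residual argument for the finitely many nontrivial elements $xq^{-1}$ finishes. Without this uniformity-in-$K$ observation, your step~(b) does not go through.
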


The proof of Theorem~\ref{thmi:convex_cocompact} relies on the hyperbolic 
quotients of $MCG(\Sigma_{g,p})$ arising in the proof of 
Theorem~\ref{thmi:resid}.  The extra work, done in 
Proposition~\ref{prop:convex_cocompact}, is to show that in all but finitely 
many such quotients, the subgroup $Q$ survives as a quasiconvex subgroup, and 
arbitrary $g\in\Sigma_{g,p}\setminus Q$ can be separated from $Q$ in 
such quotients.  At this point, the assumption about residual finiteness is 
invoked: using a result of~\cite{AGM} (namely, if all hyperbolic groups are residually finite then all hyperbolic groups are 
QCERF), we then separate the image of $g$ from this 
quasiconvex subgroup in a finite quotient.

\subsection*{More general context and proof strategy}
In order to prove our results we actually work in a more general context (that we plan on using in future work, see the next 
subsection).  Roughly speaking, we consider a group $G$ acting on a hyperbolic graph $X$ so that the vertex set of $X$ fits 
the framework of composite projection systems introduced in \cite{D_PlateSpinning}, and consider quotients $G/N$ of $G$ by 
normal subgroups $N$ generated by subgroups of vertex stabilisers consisting of ``big rotations''. The main technical 
innovation we introduce in this paper is our method for proving that the graph $X/N$ is hyperbolic. The strategy is as 
follows (see Proposition \ref{prop:lift}). Consider a geodesic triangle in $X/N$. We can lift the 3 sides of the triangle to 
a concatenation of 3 geodesics, which need not close up. If it doesn't close up, there is a non-trivial element $n\in N$ that 
maps the initial vertex $x$ of the concatenation to the terminal vertex $nx$. We then want to change the lifts so that the 
new concatenation is either a triangle, or at least $n$ is ``simpler''. What allows us to do this is Corollary 
\ref{cor:short_less_complex}, which we think of as an analogue of the Greendlinger lemma from \cite{DGO}. What the corollary 
says is that we have a large rotation $\gamma_v$ around some vertex $v$ so that $\gamma_vn$ is ``simpler'', and $v$ 
needs to lie within distance $1$ of one of the lifts. Applying $\gamma_v$ to part of the concatenation yields new lifts with 
the required property. The measure of complexity of elements of $N$ is actually rather complicated, but the same proof 
strategy works in other contexts as long as there is a version of the Greendlinger lemma so that some notion of complexity 
gets reduced when applying it. In particular, it can replace the use of the Cartan-Hadamard Theorem (for hyperbolic space) in 
the context of very rotating families from \cite{DGO} (thereby making for a simpler proof). On the other hand, 
Cartan-Hadamard cannot be applied to $X/N$ in our context, and in fact we could not see any way to apply it to any space 
quasi-isometric to it.

\subsection*{Future work in the hierarchically hyperbolic setting}
A natural strategy for extending the above applications beyond complexity $2$ 
involves combining the techniques in the present paper with the theory of 
\emph{hierarchically hyperbolic groups}~\cite{HHS_I,HHS_II} (of which mapping 
class 
groups are one of the ``type species''). 

We believe that the quotients $MCG(\Sigma_{g,p})/DT_K$ are in fact 
hierarchically hyperbolic.  One could then apply 
again our results on composite rotating graphs, and take further quotients. The complexity (in the hierarchically 
hyperbolic sense) decreases at each step, and hierarchically hyperbolic groups of minimal complexity are known to be 
hyperbolic \cite{HHS:quasiflats}.
In particular, under the assumption that every hyperbolic group is residually 
finite, one should be able to prove that for arbitrary $g,p$, the group $MCG(\Sigma_{g,p})$ admits a non-elementary 
hyperbolic quotient, and that every convex-cocompact subgroup is separable.

\subsection*{Outline of the paper}
In Section~\ref{sec:CPS_CRF}, we recall the notions of \emph{composite projection systems} and \emph{composite rotating 
families} from~\cite{D_PlateSpinning} and establish some useful facts, relying on the \emph{transfer lemma} 
from~\cite{D_PlateSpinning}.  The notion of a composite projection system relies on the projection axioms from~\cite{BBF}.  

In Section~\ref{sec:composite_projection_graph}, we introduce hyperbolic graphs into the picture, and define the notion of a 
composite projection graph and a composite rotating family on it.  Here, we state our main technical result, 
Theorem~\ref{thm:quotient_proj_graph}, and give a proof which relies on statements proved in subsequent sections.

The reader mainly interested in the proof of Theorem~\ref{thm:quotient_proj_graph} is advised to focus on the main result of 
Section~\ref{sec:shortening}, which is Corollary~\ref{cor:short_less_complex}; this is the statement, mentioned above, that 
allows one to lower the ``complexity'' of elements $n\in N$ by applying large rotations.  This is used 
in Section~\ref{sec:lift_and_project} to construct the lifts mentioned above.  From the lifting procedure, one obtains the 
facts used in the proof of Theorem~\ref{thm:quotient_proj_graph}.  In Section~\ref{sec:lift_and_project}, we also prove 
Proposition~\ref{prop:stab}, which describes how stabilisers of vertices in $X$ intersect $N$.  This is not used in the 
proof of Theorem~\ref{thm:quotient_proj_graph}, but does play a role in Section~\ref{sec:MCG}.

Finally, in Section~\ref{sec:MCG}, we consider the case of $MCG(\Sigma_{g,p})$ acting on the curve graph $\mathcal 
C(\Sigma_{g,p})$, which is a composite projection graph by Proposition~\ref{prop:mcg_cpg} (see also~\cite{D_PlateSpinning}). 
 Large powers of Dehn twists generate a collection of rotation subgroups forming a composite rotating family, and we can 
invoke Theorem~\ref{thm:quotient_proj_graph} to obtain Theorem~\ref{thmi:acyl}.  The rest of Section~\ref{sec:MCG} is 
devoted to the proofs of Theorem~\ref{thmi:resid} and 
Theorem~\ref{thmi:convex_cocompact}.

\section{Composite projection systems and rotating families}\label{sec:CPS_CRF}
We now recall the notion of a composite projection system from \cite{D_PlateSpinning}, and establish some basic facts.  The 
reader familiar with mapping class groups might want to keep in 
mind that in that context $\bbY_*$ is the collection of (isotopy classes of simple closed) curves, that two curves are 
active if they intersect, and that $d^\pi_y$ is defined using subsurface projection.

\subsection{Composite projection systems}\label{subsec:CPS}
Given $y$ in a partitioned set $\bbY_* = \sqcup_{i=1}^m \bbY_i$, denote by 
$i(y)$ the index such that $y\in  \bbY_{i(y)}$. 

\begin{defi}\label{def;CPS}\cite[Definition 1.2]{D_PlateSpinning}
	Let $\bbY_*$ be the disjoint union of finitely many countable
	sets         $\bbY_1, \dots, \bbY_m$. A \emph{composite projection system} on 
	$ (\bbY_i)_{i=1 .. m}$ (or on $\bbY_*$),  for the constant $\theta$,
	consists of 
	\begin{itemize}   
		\item a family of subsets  $\Act(y)\subset   \bbY_*$ for $ y\in
		\bbY_*$  (the \emph{active set} for $y$) such that $\bbY_{i(Y)} \subset\Act(y)$,     and such that  $x\in 
\Act(y)$ if and
		only if $y\in \Act(x)$  (\emph{symmetry in action}), 
		\item and  a family of functions
		$d^\pi_y : ( \Act(y)\setminus \{y\} )^2   \to \bbR_+$, satisfying: 
		\begin{itemize} 
		\item \textbf{Symmetry}: $d^\pi_y(x,z)=d^\pi_y(z,x)$ for $x,z\in\Act(y)\setminus\{y\}$;
		\item \textbf{Triangle inequality}: $d^\pi_y(w,x)\leq d^\pi_y(w,z)+d^\pi_y(z,x)$ for all 
$w,x,z\in\Act(y)\setminus\{y\}$;
		\item \textbf{Behrstock inequality:} $\min\{d^\pi_y(x,z),d^\pi_z(x,y)\}\leq\theta$ whenever both quantities 
are defined; 
			\item \textbf{Properness:} $|\{y \in \bbY_i, 
d^\pi_y(x,z)\geq \theta \}|<\infty$ for all $x,z \in \bbY_i$;
			\item  \textbf{Separation:} $d^\pi_y(z,z)<\theta$ for $z\in\Act(y)\setminus\{y\}$;
			\item  \textbf{Closeness in inaction:} if   $x\notin \Act(z)$ then, for
			all $y \in \Act(x) \cap \Act(z)$, we have $ d_y^\pi(x, z)\leq \theta$; 
			\item \textbf{Finite filling:}  for all $\calZ \subset \bbY_*$, there
			is a finite collection $x_j\in\calZ$ such that $\cup_j
			\Act(x_j)$ covers  $\cup_{ x\in \calZ} \Act(x)$.
		\end{itemize}
	\end{itemize}
\end{defi}

By \cite[Theorem 3.3]{BBF}, for each 
$i\leq m$, and $y\in \bbY_i$, and  for a suitable choice of $\theta$, there exists  a
modified function $d_y : \bbY_{i} \times \bbY_i       \to  \bbR_+$, 
satisfying the \emph{monotonicity} property of \cite[Theorem 3.3]{BBF} (see also \cite[Axiom (SP3)]{BBFS} for a strengthened property).  

This function is unfortunately not defined on
$\Act(y)\setminus \bbY_i$. However, $d^\pi_y$ is defined on all of $\Act(y)$.  Therefore, we define 
$d^\ang_y:(\Act(y)\setminus\{y\})^2\to\bbR_+$ as follows.  Let $x,z\in\Act(y)\setminus\{y\}$.  If $x,z\in\bbY_i$, we let 
$d^\ang_y(x,z)=d_y(x,z)$, and otherwise, we let $d^\ang_y(x,z)=d^\pi_y(x,z)$. 

Let $\bbY^j_M (x, z)=\{y\in \bbY_j \cap \Act(x)
\cap \Act(z),  d^\ang_y(x, z) \geq M\}$ (the set of $M$-large projections between $x$ and $z$ in the $j$-coordinate).  The 
elements $x,y,z$ need not be in the same coordinate.

We now introduce the first of various constraints on the constants that will appear.  Fix a composite projection system with 
constant $\theta$.  Let $\Theta=\Theta(\theta)$ be the constant provided by applying Theorem~3.3 of~\cite{BBF} to each 
$\bbY_i$ to obtain the maps $d_y$ as above.  In particular, $d_y$ now has the monotonicity property: if 
$d_y(x,z)\geq\Theta$, then $d_w(x,y),d_w(y,z)\leq d_w(x,z)$ where $w,x,y,z\in\bbY_i$.  Within $\bbY_i$, the maps $d_y$ 
continue to be symmetric and satisfy the properness property, with $\Theta$ replacing $\theta$.  The same theorem also 
provides a constant $\kappa$ so that, for all pairwise distinct $x,y,w,z\in \bbY_i$, we have
\begin{itemize}
     \item $d^\pi_y-\kappa\leq d_y\leq d^\pi_y$;
     \item $d_y(x,z)-\kappa\leq d_y(x,w)+d_y(z,w)$;
     \item $\min\{d_y(x,z),d_x(y,z)\}\leq\kappa$.
\end{itemize}
We emphasise that the constants $\kappa,\Theta$ have been chosen so that the above properties hold within each $\bbY_i$.

\begin{remark}\label{rem:constants_1}
From the proof of~\cite[Theorem 3.3]{BBF}, we see that we can take $\Theta=4\theta+1$.  Indeed, any choice of 
$\Theta>4\theta$ guarantees all of the properties of $d_y$ that we will need.  We can also take any $\kappa\ge3\theta$, 
by~\cite[Proposition 3.2]{BBF}.
\end{remark}

\subsection{Composite rotating families}\label{subsec:CRF}
We now recall the notion of a composite rotating family.  The main idea to keep 
in mind is that we want $\Gamma_x$ to consist of ``large rotations'' around $x$, where $d^\ang_x$ is thought of as the angle 
at $x$.

\begin{defi}(Composite rotating family)\label{def;CRF}
	Consider a composite projection system  $\bbY_*$ endowed with an action of a group $G$  by isomorphisms, i.e.	$G$ 
acts on $\bbY_*$, preserving the partition $\bbY_*=\bigsqcup_{i=1}^m\bbY_i$, and satisfying $\Act(gy)=g\Act(y)$ for all 
$g\in G$ and $y\in\bbY_*$.  Moreover, suppose that if $x,y,z\in\bbY_*$ are such that $d^\ang_y(x,z)$ is defined, then 
$d^\ang_{gy}(gx,gz)=d^\ang_y(x,z)$ for all $g\in G$.
	
	A \emph{composite rotating family} on  $(\bbY_*,G)$, with rotating control $\Theta_{rot}>0$ 
	is a  family of subgroups $\G_v,v\in \bbY_*$ such that   
	\begin{itemize}
		\item for all $x\in \bbY_*, \G_x <  {\rm Stab}_G(x)$,   is an infinite group;
		\item $\G_x$ acts by \textbf{rotations} around $x$ (i.e. whenever $y=x$ or $y\not\in\Act(x)$, the subgroup 
$\G_x$ fixes $y$ and $d^\ang_y$), with
		\item \textbf{proper isotropy}   (i.e. for all 
$R>0, y \in \Act(x)$, the set  $\{\gamma \in \G_x, d_x^\ang (y, \gamma y) <R\}$ is finite);
		\item for all $g\in G$, and all $x\in \bbY_*$, we have $\G_{gx}= g\G_x g^{-1}$;
		\item if $x\notin \Act(z)$ then $\Gamma_x$ and $\Gamma_z$ commute;
		\item for all $i\leq m$ and for all $x,y, z \in \bbY_i$, if $d_y(x,z)
		\leq \Theta$, then $$d_y(x,gz) \geq \Theta_{rot}$$ for all
		$g\in \G_y\setminus\{1\}$.
	\end{itemize}
	
\end{defi}

\begin{assumption}\label{assump:const_1}
 From now and until the end of the subsection, we fix a composite rotating family, and we use the notation from Definition 
\ref{def;CRF}.  Moreover, we assume that the constants are chosen as in Remark \ref{rem:constants_1}. Finally, we set 
$\Theta_0 =2\Theta + 3\kappa$.
\end{assumption}
Recall the useful transfer lemma, which allows one to reduce to ``transfer'' various configurations to a single coordinate.

\begin{lem} (Transfer Lemma, \cite[Lemma 1.4, Prop. 1.6]{D_PlateSpinning})\label{lem;transfert}
	Let $x\in \bbY_*$. For all $i$, there exists $x^t \in \bbY_i$, such that 
	\begin{itemize}
		\item for all $\gamma_x\in \G_x$,   and all $z \in \bbY_i$, one has $d_z^\ang (\gamma_x x^t, x^t) \leq 
\Theta_0 $, and 
		
		\item for all $y\in \bbY_i$ that  is $x$-active, for all  but finitely many elements $\gamma_x$ of $\G_x$, 
one has $d_y^\ang (x,\gamma_x x^t)\leq \kappa$,
		\item there is $\gamma_x\in \G_x$ so that for all $y\in \bbY_i$ that is $x$-active, we have either $d_y^\ang 
(x,\gamma_x x^t)\leq \kappa$, or $d_y^\ang (x, x^t)\leq \kappa$.
	\end{itemize}
Moreover, if $\Gamma_x$ has a fixed point in $\bbY_i$, we can choose $x^t$ to be 
such a fixed point.
\end{lem}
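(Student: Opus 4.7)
The plan is to produce $x^t$ by a case analysis on how $x$ sits relative to the coordinate $\bbY_i$, and to verify the three conditions in turn. The guiding intuition is that $x^t$ should be a proxy for $x$ inside $\bbY_i$: the orbit $\Gamma_x\cdot x^t$ should be bounded in every projection $d^\ang_z$ with $z\in\bbY_i$ (bullet~1), while still tracking the projections of $x$ under most $d^\ang_y$ with $y\in\bbY_i\cap\Act(x)$ (bullets~2 and~3).

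Two degenerate cases are essentially forced. If $i=i(x)$, then $x\in\bbY_i$, and $x^t=x$ works: $\gamma_x x^t=x$ for every $\gamma_x\in\Gamma_x$, so all bullets reduce to $d^\ang_y(x,x)\leq\theta\leq\kappa\leq\Theta_0$ by the separation axiom. If $\Gamma_x$ has a fixed point in $\bbY_i$, take that point as $x^t$: the orbit collapses, bullet~1 is immediate, and for bullets~2 and~3 it suffices to show that $\{y\in\bbY_i\cap\Act(x):d^\ang_y(x,x^t)>\kappa\}$ is finite. I would deduce this from Behrstock (such a $y$ forces $d^\pi_x(y,x^t)\leq\theta$) combined with properness, using that $x^t$ is $\Gamma_x$-fixed to constrain the set of candidate $y$.

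The substantive case is $i\neq i(x)$ with $\Gamma_x$ acting without fixed points on $\bbY_i$; since $\Gamma_x$ fixes every $z\notin\Act(x)$, this forces $\bbY_i\cap\Act(x)\neq\emptyset$, and one may pick $x^t\in\bbY_i\cap\Act(x)$. For bullet~1, fix $z\in\bbY_i$ and $\gamma_x\in\Gamma_x$. When $z\notin\Act(x)$, closeness-in-inaction applied to $(x,z)$ with the active points $x^t$ and $\gamma_x x^t$ gives $d^\pi_z(x,x^t),\,d^\pi_z(x,\gamma_x x^t)\leq\theta$; the triangle inequality then bounds $d^\pi_z(x^t,\gamma_x x^t)$, and transferring to $d^\ang_z$ costs at most $\kappa$. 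When $z\in\Act(x)$, applying Behrstock to $(x,z)$ separately with $x^t$ and with $\gamma_x x^t$, and exploiting the equivariance $d^\pi_x(z,\gamma_x x^t)=d^\pi_x(\gamma_x^{-1}z,x^t)$, splits the verification into subcases each controlled by the monotonicity of $d$ inside $\bbY_i$; a careful combination yields $d^\ang_z(x^t,\gamma_x x^t)\leq 2\Theta+3\kappa=\Theta_0$. Bullet~2 is the same Behrstock argument with $y$ in place of $z$: proper isotropy makes $d^\ang_x(y,\gamma_x x^t)$ large for all but finitely many $\gamma_x$, and Behrstock at $(x,y)$ then forces $d^\ang_y(x,\gamma_x x^t)\leq\kappa$. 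Bullet~3 follows from bullet~2 once one observes that $\{y\in\bbY_i\cap\Act(x):d^\ang_y(x,x^t)>\kappa\}$ is finite (Behrstock plus properness once more); a single $\gamma_x$ correcting these finitely many exceptional $y$ simultaneously does the job.

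The main obstacle I anticipate is the orbit-boundedness step of bullet~1 in the subcase $z\in\Act(x)$: one must juggle the $G$-equivariant projection $d^\pi$ against the monotonic projection $d$, the latter being defined only within a single coordinate and hence unavailable for comparisons involving $x\notin\bbY_i$. Each passage between the two regimes costs at most $\kappa$, and the derivation $\Theta_0=2\Theta+3\kappa$ is precisely the bookkeeping of how many such bridges and Behrstock applications accumulate in the worst case.
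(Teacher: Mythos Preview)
The paper does not prove this lemma; it is imported from \cite{D_PlateSpinning}. As the remark immediately following the statement makes explicit, the point $x^t$ is \emph{not} an arbitrary element of $\bbY_i$: it is chosen so that the $\Gamma_x$--orbit of $x^t$ has diameter at most $1$ in the BBF projection complex built from $\bbY_i$, and the first bullet is then essentially a restatement of that choice. The substance of the argument in \cite{D_PlateSpinning} is showing that such a point exists, i.e., that the $\Gamma_x$--action on that quasi-tree has a bounded orbit.

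Your sketch in the substantive case takes an arbitrary $x^t\in\bbY_i\cap\Act(x)$ and attempts to verify bullet~1 directly. This cannot work. A minor point first: in that case one actually has $\bbY_i\subset\Act(x)$ (any $z\in\bbY_i\setminus\Act(x)$ would be a $\Gamma_x$--fixed point), so your subcase ``$z\notin\Act(x)$'' is vacuous --- fortunately, since the argument you give there invokes $d^\pi_z(x,x^t)$, a quantity that is undefined when $x\notin\Act(z)$. The real problem is the subcase $z\in\Act(x)$. Nothing prevents some $z\in\bbY_i$ from having $d^\pi_z(x,x^t)$ arbitrarily large; in the mapping-class-group model this just says that two curves $x,x^t$ crossing a third curve $z$ can have large annular projection at $z$. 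But your own (correct) argument for bullet~2 shows $d^\pi_z(x,\gamma_x x^t)\le\kappa$ for all but finitely many $\gamma_x$, and the triangle inequality then forces $d^\pi_z(x^t,\gamma_x x^t)\ge d^\pi_z(x,x^t)-\kappa$, which can exceed any prescribed bound. Thus bullet~1 fails for a generic choice of $x^t$, and no ``careful combination'' of Behrstock, equivariance and monotonicity will rescue it: what is missing is precisely the step that singles out a good $x^t$, namely the bounded-orbit argument on the projection complex. Your treatments of the degenerate cases and of bullets~2 and~3 are essentially sound and would go through once $x^t$ is chosen correctly.
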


\begin{rem}
 The reader familiar with the construction of projection complexes from \cite{BBF}  will notice that the first bullet says 
that $\G_x$ has an orbit of diameter at most $1$ in the projection complex of $\bbY_i$. This is in fact the defining 
property 
of $x^t$ in \cite{D_PlateSpinning}, and the reason why the ``moreover'' part holds.
\end{rem}

\begin{rem}\label{rem:either_way}
 Notice that by the first and third bullets, we have $d_y^\ang(x,x^t)\leq \Theta_0+\kappa$ (regardless of which case from the 
third bullet applies).
\end{rem}

\begin{cor}\label{cor:large_multicolor_rotations}
 Let $x,y\in \bbY_*$ so that $x$ is $y$-active, and let $\gamma_y\in\Gamma_y-\{1\}$. Then $d_y^\ang(x,\gamma_yx)\geq 
\Theta_{rot}-2(\Theta_0+2\kappa)$.
\end{cor}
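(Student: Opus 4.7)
The plan is to reduce the statement, via the transfer lemma, to the rotating-family axiom of Definition~\ref{def;CRF}, which is only stated for three points in a single coordinate $\bbY_i$. Set $i = i(y)$ and use Lemma~\ref{lem;transfert} to produce a transfer point $x^t \in \bbY_i$ associated to $x$. By Remark~\ref{rem:either_way},
$$d_y^\ang(x, x^t) \leq \Theta_0 + \kappa.$$
Because the $G$-action preserves $d^\ang$ and $\gamma_y$ fixes $y$, applying $\gamma_y$ to both points gives $d_y^\ang(\gamma_y x, \gamma_y x^t) = d_y^\ang(x, x^t) \leq \Theta_0 + \kappa$ as well.

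Next, since $\gamma_y y = y$ and the $G$-action permutes the blocks of the partition, $\gamma_y$ preserves $\bbY_i$ setwise; in particular both $x^t$ and $\gamma_y x^t$ lie in $\bbY_i$. Inside this coordinate, $d_y^\ang$ coincides with the BBF-modified distance $d_y$. By the separation axiom and $d_y \leq d_y^\pi$, one has $d_y(x^t, x^t) < \theta \leq \Theta$, so the rotation clause of Definition~\ref{def;CRF} applies and yields
$$d_y(x^t, \gamma_y x^t) \geq \Theta_{rot}.$$

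To conclude, I would insert $x$ and $\gamma_y x$ as intermediates between $x^t$ and $\gamma_y x^t$ and apply the triangle inequality twice. If $x \in \bbY_i$, the modified triangle inequality for $d_y$ carries slack $\kappa$ at each step, so chaining it twice gives
$$d_y(x^t, \gamma_y x^t) \leq d_y(x^t, x) + d_y(x, \gamma_y x) + d_y(\gamma_y x, \gamma_y x^t) + 2\kappa,$$
and rearranging yields $d_y^\ang(x, \gamma_y x) \geq \Theta_{rot} - 2(\Theta_0 + 2\kappa)$ exactly. If instead $x \notin \bbY_i$, then $d_y^\ang(x, \gamma_y x) = d_y^\pi(x, \gamma_y x)$, and using the slack-free triangle inequality of $d_y^\pi$ together with $d_y^\pi \geq d_y$ produces the slightly stronger bound $\Theta_{rot} - 2(\Theta_0 + \kappa)$. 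The only real obstacle is this case analysis between $d_y$, $d_y^\pi$, and $d_y^\ang$ when $x$ and the transfer point $x^t$ lie in different coordinates; once the correct variant of each triangle inequality is applied, the arithmetic is immediate.
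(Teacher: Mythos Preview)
Your proof is correct and follows essentially the same approach as the paper: transfer $x$ to $x^t\in\bbY_{i(y)}$, invoke Remark~\ref{rem:either_way} and equivariance to bound $d_y^\ang(x,x^t)$ and $d_y^\ang(\gamma_y x,\gamma_y x^t)$, use the rotation axiom to get $d_y(x^t,\gamma_y x^t)\geq\Theta_{rot}$, and conclude by an approximate triangle inequality. The paper compresses the last step into the phrase ``the (approximate) triangular inequality for $d_y^\ang$'', whereas you spell out the case analysis depending on whether $x\in\bbY_{i(y)}$ and track the slack $\kappa$ explicitly; this is a welcome clarification rather than a different argument.
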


\begin{proof}
 Let $x^t$ be as in Lemma \ref{lem;transfert}. Recall that we have $d_y^\ang(x,x^t)\leq \Theta_0+\kappa$ (Remark 
\ref{rem:either_way}).  By equivariance, $d_y^\ang(\gamma_y x,\gamma_y x^t)\leq \Theta_0+\kappa$, and the conclusion follows 
from the (approximate) triangular inequality for $d_y^\ang$.
\end{proof}

Also, the transfer lemma allows one to transfer properness, from Definition \ref{def;CPS}, which will be useful.

\begin{lem}\label{lem:more_proper}
	For all $i$ and all $x,z \in \bbY_i$  the set   $ I= \{y \in \bbY_*, d^\pi_y(x,z)\geq \theta +2\kappa  \}$ is finite.
\end{lem}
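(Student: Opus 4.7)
The plan is to decompose $I$ by colour, writing $I=\bigsqcup_{j=1}^m I_j$ where $I_j = I\cap \bbY_j$. Since $m$ is finite, it suffices to show each $I_j$ is finite. For $j=i$, this is immediate from the Properness axiom of Definition~\ref{def;CPS}, since $\theta + 2\kappa \geq \theta$.

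For $j\neq i$, the plan is to reduce to Properness inside $\bbY_j$ by using the Transfer Lemma to replace $x$ and $z$ by points of $\bbY_j$. Applying Lemma~\ref{lem;transfert} to $x$ with target index $j$ produces $x^t \in \bbY_j$ together with an element $\gamma_x \in \Gamma_x$ such that, for every $y\in \bbY_j$ that is $x$-active, either $d^\ang_y(x,\gamma_x x^t) \leq \kappa$ or $d^\ang_y(x,x^t) \leq \kappa$. Since $x\in \bbY_i$ and both $x^t, \gamma_x x^t$ lie in $\bbY_j\neq \bbY_i$, the definition of $d^\ang_y$ forces these to be genuine $d^\pi_y$-bounds. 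I would do the same with $z$ to obtain $z^t,\gamma_z z^t\in \bbY_j$ with the analogous dichotomy, and note that every $y\in I_j$ is automatically both $x$- and $z$-active, since $d^\pi_y(x,z)$ is defined.

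I would then partition $I_j$ into at most four sub-classes according to which of $\{x^t,\gamma_x x^t\}$ and $\{z^t,\gamma_z z^t\}$ realises the $\kappa$-bound for a given $y$, and fix one such sub-class with representative pair $(x',z')\in \bbY_j\times \bbY_j$. For every $y$ in this sub-class with $y\notin \{x',z'\}$, the triangle inequality for $d^\pi_y$, combined with the $\kappa$-bounds, yields
$$d^\pi_y(x',z') \;\geq\; d^\pi_y(x,z) - d^\pi_y(x,x') - d^\pi_y(z',z) \;\geq\; (\theta+2\kappa) - \kappa - \kappa \;=\; \theta.$$
Since $y,x',z'\in \bbY_j$, the Properness axiom applied in $\bbY_j$ to the pair $(x',z')$ bounds the number of such $y$. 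Each of the (at most four) sub-classes is therefore finite, and hence so is $I_j$.

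The main obstacle I expect is precisely that Properness is formulated only for triples lying in a common $\bbY_i$, so it does not apply when $y$ has a colour different from that of $x$ and $z$. The Transfer Lemma is the tool designed to bridge this gap; the only combinatorial cost is the four-fold pigeonhole forced by the dichotomy in its third bullet, which prevents us from using a single uniform replacement pair $(x',z')$ across all of $I_j$.
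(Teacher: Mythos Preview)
Your proof is correct and follows essentially the same approach as the paper's: both arguments use the Transfer Lemma to move $x$ and $z$ into $\bbY_j$, invoke the four-fold dichotomy coming from the third bullet of Lemma~\ref{lem;transfert}, and then apply the triangle inequality together with Properness in $\bbY_j$. The only cosmetic difference is that the paper phrases the argument by contradiction (assume $I$ infinite, extract an infinite monochromatic subset, pigeonhole on the four cases), whereas you argue directly by partitioning each $I_j$ into finitely many pieces.
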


\begin{proof}
	Assume it is infinite, and extract an infinite family of elements of the same coordinate $j$. We use  Lemma 
\ref{lem;transfert} to transfer $x$ and $z$ in $\bbY_j$: for $x^t$ and $z^t$ as in the transfer lemma, we have that for all 
$y\in \bbY_j \cap I$, and  for 
$\phi_x(y)$ either the specific element $ \gamma_x\in \G_x$ from the lemma (third point), or the identity, (and $\phi_z$ 
similarly),  one has $d^\ang_y(\phi_x(y) x^t, \phi_z(y) z^t) >  \theta$.  One may extract an infinite family of elements $y$ 
for which the $\phi_x(y)$ are all equal, and the $\phi_z(y)$ are all equal. This provides two elements $\phi_x x^t$ and  
$\phi_z z^t$ of $\bbY_j$ such that  $\{y \in \bbY_j, d^\pi_y(\phi_x x^t, \phi_z z^t)\geq \theta\}$ is infinite, which is a 
contradiction with properness.
\end{proof}

\begin{notation}\label{notation:kernel}
Given a composite rotating family, let $N\leq G$ be the subgroup generated by all the subgroups $\G_x, x\in  \bbY_*$.
\end{notation}

\section{Composite projection graphs}\label{sec:composite_projection_graph}
We say that $(X,\bbY_*)$ is a $G$-\emph{composite projection graph} if
\begin{enumerate}
	\item $X$ hyperbolic graph and $G$ acts on $X$ by simplicial automorphisms.
	\item  \textbf{Composite projection system:}  $\bbY_*=X^{(0)}$ has the structure of a composite projection system on 
which $G$ acts by isomorphisms.  We let $\theta$ be the constant from Definition~\ref{def;CPS} and let  $\Theta,\kappa$ be 
the constants, depending on $\theta$, from the discussion following that definition.  
	\item \textbf{Bounded geodesic image (BGIT):}  There exists $C$ so that the following holds. For each $x,y,s\in 
\bbY_*$ so that $d_s(x,y)$ is defined and larger than $C$, on any geodesic $[x,y]$ there exists a vertex $w$ with 
$d_X(w,s)=1$.\label{item:BGIT}
\end{enumerate}

Moreover, $(\{\Gamma_s\},\bbY^\tau_*)$ is a \emph{composite rotating family} with constant $\Theta_{rot}$ on the 
$G$-\emph{composite projection graph} $(X,\bbY_*)$ if:
\begin{enumerate}
	\item $\{\Gamma_s\}$ is a composite rotating family on a composite projection system $\bbY^\tau_*\subseteq \bbY_*$, 
with constant $\Theta_{rot}$.
	\item $\Gamma_s$ fixes any $w$ with $d_X(w,s)=1$.
\end{enumerate}

Our main technical statement is about properties of the action of $G$ on $X$ that persist for the action of $G/N$ on $X/N$ 
when the rotations are sufficiently large.

Recall that, given a group $G$ acting on a metric space $X$, the element $g\in G$ is \emph{WPD} (weakly proper 
discontinuous) 
if for every $x_0\in X$ and $r\geq 0$  there exists $n_0$ so that for all $n\geq n_0$ the set $$\{h\in G: 
d_X(x_0,hx_0),d_X(g^nx_0,hg^nx_0)\leq r\}$$ is finite.

Our main goal in this section is to prove Theorem~\ref{thm:quotient_proj_graph}.  The proof refers to various statements 
which are postponed to subsequent sections, so that the high-level strategy is made clear before the technicalities are 
introduced.

\begin{thm}\label{thm:quotient_proj_graph}
Let $(X,\bbY_*)$ be a $G$-composite projection graph.
	If $\Theta_{rot}$ is sufficiently large, in terms of $X,G,\bbY_*,\bbY^\tau_*,C$, then for $N=\langle 
\{\Gamma_s\}\rangle$, we have:
	\begin{itemize}
		\item $X/N$ is hyperbolic.
		\item If the action of $G$ on $X$ has a loxodromic then so does the action of $G/N$ on $X/N$.  If the action 
of $G$ on $X$ has a WPD element, then so does the action of $G/N$ on $X/N$.  If the action of $G$ on $X$ is non-elementary, 
then so is the action of $G/N$ on $X/N$.
	\end{itemize}
\end{thm}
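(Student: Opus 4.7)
The plan is to implement the strategy outlined in the introduction, using the lifting mechanism of Proposition~\ref{prop:lift} powered by the Greendlinger-type Corollary~\ref{cor:short_less_complex}. First I choose $\Theta_{rot}$ sufficiently large in terms of $\theta,\Theta,\kappa$, the bounded-geodesic-image constant $C$, and the hyperbolicity constant $\delta$ of $X$, so that any non-trivial $n\in N$ producing a ``defect'' in a lifted configuration admits a rotation $\gamma_v\in\Gamma_v$, with $v$ at $X$-distance $\leq 1$ from one of the lifted segments, such that $\gamma_v n$ has strictly smaller complexity in the sense of Section~\ref{sec:shortening}.

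To establish hyperbolicity of $X/N$ I show that geodesic triangles are uniformly thin. Given a geodesic triangle $\Delta$ in $X/N$ with vertices $\bar x_0,\bar x_1,\bar x_2$, I lift its sides to a cycle of three $X$-geodesics $[x_0,x_1],[x_1,x_2],[x_2,x_0']$, where $x_0'=nx_0$ for some $n\in N$. If $n=1$ the cycle is an honest triangle in $X$, which is $\delta$-thin, and hence so is $\Delta$. Otherwise Corollary~\ref{cor:short_less_complex} supplies a vertex $v$ within distance $1$ of the cycle and a rotation $\gamma_v\in\Gamma_v$ with $\gamma_v n$ strictly simpler; since $\Gamma_v$ fixes every vertex within distance $1$ of $v$, applying $\gamma_v$ to the portion of the cycle beyond the contact vertex produces a new lift of $\Delta$ with defect $\gamma_v n$. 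Induction on complexity terminates at $n=1$, giving uniform thinness.

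The dynamical statements follow from the same lifting technology applied to paths. For a $G$-loxodromic $g$ on $X$, I lift a geodesic in $X/N$ from $\bar x$ to $\bar g^{k}\bar x$ to a geodesic in $X$ from some $x$ to $ng^{k}x$ for some $n\in N$; shortening $n$ via rotations produces a path in $X$ of the same $X/N$-length from $x$ to $\gamma g^{k}x$, and the control on how rotations move projections (together with bounded geodesic image) forces this length to grow linearly in $k$, so $\bar g$ is loxodromic. For WPD, given $\bar x_0$ and $r$, any $\bar h\in G/N$ moving both $\bar x_0$ and $\bar g^{k}\bar x_0$ by at most $r$ lifts to $h\in G$ moving $x_0$ and $g^{k}x_0$ by at most $r$ plus a bounded additive error; WPD for $g$ in $X$ bounds the finitely many possible such $h$, and Proposition~\ref{prop:stab} controls the contribution of $N$ to the vertex stabilisers, so only finitely many $\bar h$ arise. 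Non-elementarity is inherited: two $G$-independent loxodromics project to loxodromics on $X/N$ with distinct fixed points on $\partial(X/N)$, because a shared endpoint would, after lifting quasi-axes in the same manner, force the original axes in $X$ to fellow-travel on arbitrarily long segments, contradicting independence.

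The main obstacle will be step two: making the induction on complexity terminate under the modifications $n\mapsto\gamma_v n$. The difficulty is that applying $\gamma_v$ can reshuffle which projections between the lifted segments are large, so the complexity measure on $N$ must be robust enough that each rotation produced by Corollary~\ref{cor:short_less_complex} genuinely decreases it. Ensuring the constants fit together---a threshold large relative to $\theta,\Theta,\kappa,C$ for counting large projections, and $\Theta_{rot}$ large relative to that threshold---is precisely the bookkeeping that underlies the phrase ``$\Theta_{rot}$ sufficiently large'' in the statement, and is the only place where the hypotheses of the theorem are really used.
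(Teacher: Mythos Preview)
Your hyperbolicity argument is essentially the paper's: lift triangles via Proposition~\ref{prop:lift}, use Corollary~\ref{cor:short_less_complex} to reduce the defect, and conclude thinness (this is Corollary~\ref{cor:quotient_hyperbolic}). That part is fine.

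The dynamical part has a real gap. The paper does \emph{not} show that an arbitrary loxodromic $g\in G$ descends to a loxodromic on $X/N$; instead, Proposition~\ref{prop:preserve_lox} and Lemma~\ref{lem:non_elem} have the extra hypothesis that $d_s(x_0,g^n x_0)<\Theta_{rot}/10$ for all $n$ and all $s$ where defined (and similarly for the pair of loxodromics in the non-elementary case). The substance of the paper's proof of Theorem~\ref{thm:quotient_proj_graph} is exactly to manufacture this hypothesis: for a given loxodromic $\gamma$ and basepoint $x_0$ one uses BGIT together with the quasi-geodesic $\{x_0,\gamma x_0,\ldots,\gamma^n x_0\}$ and Lemma~\ref{lem:more_proper} to show that $L_\gamma=\sup_{n,s}d_s(x_0,\gamma^n x_0)$ is \emph{finite}, then sets $L=\inf_\gamma L_\gamma$ and requires $\Theta_{rot}>10(L+1)$. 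This is why the statement says ``sufficiently large in terms of $X,G,\ldots$'': the threshold depends on the $G$--action, not just on $\theta,\Theta,\kappa,C,\delta$ as you wrote.

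Your direct approach---lift a geodesic $[\bar x,\bar g^k\bar x]$, then ``shorten $n$ via rotations''---does not give what you need. After rotations you end at some $n'g^kx$ with $n'\in N$, and nothing prevents $d_X(x,n'g^kx)$ from being small even when $d_X(x,g^kx)$ is large; equivalently, you have no control over $\min_{n\in N}d_X(x,ng^kx)$. The paper sidesteps this by using Lemma~\ref{lift;segments_witout_angles}: under the small-projection hypothesis the quotient map is isometric on $[x_0,g^nx_0]$, so $d_{X/N}(\bar x_0,\bar g^n\bar x_0)=d_X(x_0,g^nx_0)$ directly. For WPD, the paper lifts quadrilaterals using the ``moreover'' clause of Proposition~\ref{prop:lift}, which again needs the small-projection hypothesis; Proposition~\ref{prop:stab} is not used here (the paper says so explicitly). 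For non-elementarity the paper repeats the $L_\gamma$ argument for pairs $(\gamma,\gamma')$ to get a finite $M=\inf L_{\gamma,\gamma'}$ and enlarges $\Theta_{rot}$ accordingly. You should insert this bounding-of-projections step and the corresponding enlargement of $\Theta_{rot}$; without it the loxodromic, WPD, and non-elementary conclusions do not follow.
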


\begin{proof}
We will refer to three facts established in the next section, Corollary~\ref{cor:quotient_hyperbolic}, 
Proposition~\ref{prop:preserve_lox}, and Lemma~\ref{lem:non_elem}.  Let $\theta,\Theta,\kappa$ be the constants 
from above, which depend on the composite projection system, and recall that $C$ is the BGIT constant associated to the 
composite rotating family.  Suppose that $\Theta_{rot}$ satisfies 
$\frac{3}{10}\Theta_{rot}>\frac52\Theta_0+3\kappa+2C$. 

Corollary~\ref{cor:quotient_hyperbolic} implies that $X/N$ is hyperbolic. 

Suppose that $\gamma\in G$ acts loxodromically on $X$.  Suppose, moreover, that there exists $x_0\in X$ such that 
$d_s(x_0,\gamma^nx_0)<\Theta_{rot}/10$ for all $n\in\bbZ$ and all $s$ for which the preceding quantity is defined.  
Then Proposition~\ref{prop:preserve_lox} ensures that the image $\bar\gamma\in G/N$ of $\gamma$ is loxodromic on $X/N$, and, 
moreover, if $\gamma$ acts on $X$ as a WPD isometry, then $\bar\gamma$ acts on $X/N$ as a WPD isometry.  We need to show 
that 
for a suitably large $\Theta_{rot}$, there exists a loxodromic (resp. loxodromic WPD) element $\gamma\in G$ with the desired 
small-projection property from Proposition \ref{prop:preserve_lox}. 

Fix a base vertex $x_0\in X$ and let $\gamma\in G$ be loxodromic on $X$ (we choose it loxodromic WPD if there is such an 
element in $G$).  We now show that $d_s(x_0,\gamma^nx_0)$ is uniformly bounded whenever it is defined. This is ultimately a 
consequence of BGIT, which implies that the ``tails'' of the orbit of $\gamma$ do not affect projection distances very much.

Let $L_\gamma=L_\gamma(x_0)$ be the supremum over 
all $n\in\bbZ$, and over all $s$ for which the quantity is defined, of $d_s(x_0,\gamma^nx_0)$.  We claim that 
$L_\gamma<\infty$.

Indeed, let $\delta$ be the hyperbolicity constant for $X$.  Then there exists $\mu=\mu(\gamma,\delta)$ such that 
each geodesic $[x_0,\gamma^nx_0]$ lies at Hausdorff distance at most $\mu$ from the (quasigeodesic) sequence 
$x_0,\gamma x_0,\ldots,\gamma^nx_0$.  

Fix $s$.  By BGIT, either $d_s(x_0,\gamma^nx_0)\leq C$, or $s$ is adjacent to some vertex of $[x_0,\gamma^nx_0]$.  Hence 
there exist integers $p,q$ with $1\leq p\leq q\leq n$ and $|p-q|$ depending only on $\gamma$, so that $s$ is not adjacent to 
a vertex 
of $[x_0,\gamma^ix_0]$ or $[\gamma^ix_0,\gamma^nx_0]$ unless $p\leq i\leq q$.  Thus $d_s(x_0,\gamma^nx_0)\leq 
d_s(\gamma^px_0,\gamma^qx_0)+2C$, by BGIT and the triangle inequality.  It follows that $d_s(x_0,\gamma^nx_0)\leq 
2C+d_{\gamma^{-p}s}(x_0,\gamma^{q-p}x_0)$.  Now, either  $d_{\gamma^{-p}s}(x_0,\gamma^{q-p}x_0)\leq \theta+2\kappa$, in 
which 
case  $d_s(x_0,\gamma^nx_0)\leq \theta+2\kappa+2C$, or $\gamma^{-p}s$ is one of finitely many elements for which 
$d_{\gamma^{-p}s}(x_0, \gamma^{q-p}x_0)>\theta+2\kappa$, by Lemma~\ref{lem:more_proper}.  Letting $L'_\gamma$ be the maximum 
of  $d_{\gamma^{-p}s}(x_0,\gamma^{q-p}x_0)$ over these finitely many elements gives $d_s(x_0,\gamma^nx_0)\leq L'_\gamma+2C$. 
 Hence $L_\gamma<\infty$ for all $\gamma$.

Now let $L=\inf L_\gamma$, where the infimum is taken over the set of $\gamma\in G$ that are loxodromic on $X$.  Note that 
$L$ depends only on the composite projection system, its associated constants, the BGIT constant $C$, and the $G$--action, 
but not on the choice of rotation subgroups.  Suppose that $\Theta_{rot}>10(L+1)$.  Then, any loxodromic $\gamma$ with 
$L_\gamma\leq L+1$ satisfies the hypothesis of Proposition~\ref{prop:preserve_lox} and thus has image $\bar \gamma$ which is 
loxodromic on $X/N$, and WPD if $\gamma$ is itself WPD. 

Finally, suppose that $\gamma$ is as above and that $\gamma'$ is a loxodromic element that is independent of $\gamma$ and 
has the property that $\bar\gamma'$ is also loxodromic on $X/N$.  (If $G$ contains independent loxodromics 
$\gamma,\gamma_1$, then we can choose $\gamma'$ to be a conjugate of $\gamma$ by a sufficiently high power of $\gamma_1$, 
and see from the above argument that $\bar\gamma'$ is again loxodromic on $X/N$.)

Given such a pair $\gamma,\gamma'$, let $L_{\gamma,\gamma'}$ be the supremum, over all $m,n\in\bbZ$ and all $s$ where 
the following quantity is defined, of $d_s(\gamma^mx_0,(\gamma')^nx_0)$.  We claim that $L_{\gamma,\gamma'}<\infty$.  
Indeed, since $\gamma,\gamma'$ are independent loxodromics, there exists $\mu$ such that we have the following.  For all 
$m,n\in\bbZ$, any geodesic $[\gamma^m x_0,(\gamma')^nx_0]$ lies at Hausdorff distance $\leq\mu$ from 
$$\{\gamma^mx_0,\gamma^{m-1}x_0,\cdots,x_0\}\cup\{x_0,\gamma'x_0,\cdots,(\gamma')^nx_0\}.$$  We can now argue as 
above, using BGIT and replacing $\{x_0,\gamma x_0,\ldots,\gamma^nx_0\}$ by 
$\{\gamma^mx_0,\gamma^{m-1}x_0,\cdots,x_0\}\cup\{x_0,\gamma'x_0,\cdots,(\gamma')^nx_0\}$.

Now, letting $\gamma,\gamma'$ vary over all pairs of independent loxodromic elements of $G$ such that 
$\bar\gamma,\bar\gamma'\in G/N$ are loxodromic on $X/N$, take $M=\inf_{\gamma,\gamma'}L_{\gamma,\gamma'}$.  Suppose that 
$\Theta_{rot}>10(M+1)$.  Then any pair $\gamma,\gamma'\in G$ of independent loxodromics such that $\bar\gamma,\bar\gamma'$ 
are loxodromic and $L_{\gamma,\gamma'}\leq M+1$ has the property that $\bar\gamma,\bar\gamma'$ are independent loxodromics, 
by Lemma~\ref{lem:non_elem}.  Hence, if $\Theta_{rot}>10(M+1)$, the action of $G/N$ on $X/N$ is non-elementary.
\end{proof}

\section{Shortenings and their applications}\label{sec:shortening}
\newcommand{\Thetashortenable}{\Theta_{short}}
We work in the setting of Section~\ref{sec:composite_projection_graph}, keeping all notation.  

The results of this section support the lifting procedure developed in Section~\ref{sec:lift_and_project}, which is vital 
for proving the statements (Proposition~\ref{prop:preserve_lox}, Corollary~\ref{cor:quotient_hyperbolic}, 
Lemma~\ref{lem:non_elem}) used in the proof of Theorem~\ref{thm:quotient_proj_graph}.  

The main statements are Corollary~\ref{cor:short_less_complex} and Proposition~\ref{prop:shortening}, on which the corollary 
depends.  In fact, the reader interested in the proof of Theorem~\ref{thm:quotient_proj_graph} is advised to read the 
statement of Corollary~\ref{cor:short_less_complex} and then proceed to Section~\ref{sec:lift_and_project}.  In order to 
understand the statement of Corollary~\ref{cor:short_less_complex}, one needs to know the following.  For each $\gamma\in 
N\setminus\{1\}$, there is an associated complexity $(\alpha(\gamma),n(\gamma))$, where $\alpha(\gamma)$ is a countable 
ordinal and $n(\gamma)\in\bbN$, and $\Thetashortenable$ is a constant depending on $\Theta_{rot}$, $\Theta_0$, and $\kappa$.  
(This value is one of the sources of the ``sufficiently large'' constraint on $\Theta_{rot}$ in 
Theorem~\ref{thm:quotient_proj_graph}.)

\subsection{Structure of the kernel $N$ and complexity of elements}\label{subsec:GOG}
The aim of \cite{D_PlateSpinning} was to investigate the structure of $N$. We may extract the following statement, combining 
the construction from \cite[\S 2.4.2]{D_PlateSpinning} with \cite[Lem 2.16, Prop. 2.13, Lem 2.17]{D_PlateSpinning}:

\begin{theo}\label{theo;statement_that_should_have_been_in_D_PS}
For any countable ordinal $\alpha$ there exists a subset $\bbY_{*_\alpha}$  of   $\bbY_*$ such that,  denoting by 
$N_\alpha$ the subgroup of $N$ generated by $\G_v, v\in   \bbY_{*_\alpha}$,  we have:
	\begin{enumerate}
		\item $N_0=\{1\}$ and $\bbY_{*_0}=\emptyset$
		\item if $\alpha$ is not a limit ordinal, there exists $i(\alpha)$, and a subset $R_{\alpha} \subset 
\bbY_{i(\alpha)}$,   such that  $N_\alpha$ is an amalgamated free product of $N_{\alpha-1}$ with the groups
		$$\G_{v} \times  \langle \G_w, \, w\in  \left(\bbY_{*_{\alpha-1}}   \setminus  \Act(v)\right)  \rangle $$ for 
 $v\in  R_{\alpha}$.  (\cite[Lem. 2.16]{D_PlateSpinning}) 
		
		Notice that each element in the $N_\alpha$-orbit of $R_{\alpha}$ naturally corresponds to a vertex in the 
Bass-Serre tree of the  previous decomposition;  from now on we implicitly identify any such element with the corresponding 
vertex.
		\item \label{item:shortenings_to_alpha-1}
		Suppose that $\alpha$ is not a limit ordinal, and let $T_\alpha$ be the Bass-Serre tree of the  previous 
decomposition.  Also, let $v_1, v_2, v_3$ be three vertices  of $T_\alpha$ in the $N_\alpha$-orbit of the vertices of   
$R_{\alpha}$, with $v_2\in [v_1, v_3]$ in $T_\alpha$. Then, when seen as elements in  $\bbY_{i(\alpha)}$, one has 
$d_{v_2}^\ang(v_1, v_3) \geq \Theta_{rot}- \Theta_0$, and there exists $\gamma_{v_2} \in \G_{v_2}$ such that  
$d_{v_2}^\ang(v_1, \gamma_{v_2} v_3) \leq  \Theta_0$.
		(\cite[Lem. 2.16 with Prop 2.13]{D_PlateSpinning})
				
		\item  if $\alpha$ is a limit ordinal,  $N_\alpha$ is the direct union of $N_\beta, \beta <\alpha$.  
(\cite[Lem. 2.17]{D_PlateSpinning})
		\item   $\bigcup_\alpha \bbY_{*_\alpha} =  \bbY_*$. (\cite[Lem. 2.19]{D_PlateSpinning})
	\end{enumerate}
\end{theo}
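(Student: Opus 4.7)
The plan is to proceed by transfinite induction on $\alpha$, simultaneously constructing $\bbY_{*_\alpha}$, the subgroup $N_\alpha$, and (at successors) the set $R_\alpha$, and verifying (1)--(5) at each stage. The base case $\alpha=0$ is the definition, and at a limit ordinal $\alpha$ one sets $\bbY_{*_\alpha}=\bigcup_{\beta<\alpha}\bbY_{*_\beta}$ and $N_\alpha=\bigcup_{\beta<\alpha}N_\beta$, so that (4) holds automatically. All substantive work is in the successor step and in arranging that the resulting chain exhausts $\bbY_*$.

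At a successor $\alpha$, I would choose an index $i(\alpha)\in\{1,\dots,m\}$ and a set $R_\alpha$ of $N_{\alpha-1}$-orbit representatives of vertices in $\bbY_{i(\alpha)}\setminus\bbY_{*_{\alpha-1}}$, then define $\bbY_{*_\alpha}=\bbY_{*_{\alpha-1}}\cup N_{\alpha-1}\cdot R_\alpha$. The amalgamated decomposition in (2) is forced by the commutation relation built into Definition \ref{def;CRF}: for any $w\notin\Act(v)$ the subgroups $\Gamma_v$ and $\Gamma_w$ commute, so each new ``star'' vertex $v\in R_\alpha$ naturally contributes a vertex group $\Gamma_v\times\langle\Gamma_w:w\in\bbY_{*_{\alpha-1}}\setminus\Act(v)\rangle$, amalgamated with $N_{\alpha-1}$ over the commutant factor $\langle\Gamma_w:w\in\bbY_{*_{\alpha-1}}\setminus\Act(v)\rangle$. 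That this amalgamation is faithful --- i.e.\ that the natural map to $N_\alpha$ is injective --- is the content of \cite[Lem.~2.16]{D_PlateSpinning}, which I would quote directly.

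For item (\ref{item:shortenings_to_alpha-1}), given collinear vertices $v_1,v_2,v_3\in T_\alpha$ with $v_2$ in the $N_\alpha$-orbit of $R_\alpha$, moving from the $v_1$-side to the $v_3$-side across $v_2$ amounts, in Bass-Serre terms, to applying a nontrivial element $\gamma_{v_2}\in\Gamma_{v_2}$. To extract the projection estimates, I would transfer $v_1$ and $v_3$ into $\bbY_{i(\alpha)}$ using Lemma \ref{lem;transfert} and Remark \ref{rem:either_way}, picking up an error of at most $\Theta_0$ in $d^\ang_{v_2}$. The last bullet of Definition \ref{def;CRF}, applied to the transferred points $v_1^t,v_3^t\in\bbY_{i(\alpha)}$, forces $d_{v_2}(v_1^t,\gamma\, v_3^t)\ge\Theta_{rot}$ for every nontrivial $\gamma\in\Gamma_{v_2}$, which after reabsorbing the transfer error yields the lower bound $\Theta_{rot}-\Theta_0$. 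The existence of $\gamma_{v_2}$ with $d^\ang_{v_2}(v_1,\gamma_{v_2} v_3)\le\Theta_0$ then follows by reading the same bullet contrapositively: exactly one rotation brings $\gamma_{v_2} v_3^t$ into the $\Theta$-neighbourhood of $v_1^t$, and the $\Theta_0$ absorbs transfer error. This is essentially \cite[Prop.~2.13]{D_PlateSpinning}.

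For (5), since each $\bbY_i$ is countable, $\bbY_*$ is countable, and a greedy enumeration argument --- at each successor stage, arrange for the least-indexed element of $\bbY_*\setminus\bbY_{*_{\alpha-1}}$ to lie in the $N_{\alpha-1}$-orbit of $R_\alpha$ --- guarantees $\bigcup_\alpha\bbY_{*_\alpha}=\bbY_*$ at some countable ordinal. The hardest part, which I would expect to eat most of the work, is verifying (2) and (\ref{item:shortenings_to_alpha-1}) in tandem rather than separately: the commutation relations alone do not rule out unintended collapsing inside the putative amalgamated product, and one needs the Greendlinger-type estimate of (\ref{item:shortenings_to_alpha-1}) to supply a Bass-Serre normal form witnessing faithfulness. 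These inter-dependencies are exactly the content of \cite[\S2.4.2, Lem.~2.16, Prop.~2.13, Lem.~2.17]{D_PlateSpinning}, so the present proof is in essence a packaging of those results into the single transfinite statement.
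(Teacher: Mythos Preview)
Your proposal is essentially correct and matches the paper's approach: the paper does not give an independent proof of this theorem but simply packages the cited results from \cite{D_PlateSpinning} (the construction of \S2.4.2 together with Lemmas 2.16, 2.17, 2.19 and Proposition 2.13), and your sketch does the same.

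One small inaccuracy worth flagging: in your treatment of item~(\ref{item:shortenings_to_alpha-1}) you invoke the transfer lemma to move $v_1,v_3$ into $\bbY_{i(\alpha)}$, but this is unnecessary --- the vertices $v_1,v_2,v_3$ lie in the $N_\alpha$-orbit of $R_\alpha\subset\bbY_{i(\alpha)}$, and since the $G$-action preserves the partition, all three are already in $\bbY_{i(\alpha)}$. The estimate then comes directly from the rotating-family axiom within a single colour (plus the tree combinatorics encoded in \cite[Prop.~2.13]{D_PlateSpinning}), with no transfer error to absorb.
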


We can now define the complexity $(\alpha(\gamma),n(\gamma))$ of $\gamma\in N\setminus 1$.

\begin{defi}\label{defn:alpha}
	Given $\gamma \in N$, let $\alpha(\gamma)$ be the smallest ordinal $\alpha$ for which $\gamma $ is in a conjugate 
of $N_\alpha$ in $N$.  
  Observe that $\alpha(\gamma)$ is never a limit ordinal, by the third point of Theorem 
\ref{theo;statement_that_should_have_been_in_D_PS},  and if $\alpha(\gamma)=0$ then $\gamma=1$. 
\end{defi}

\begin{defi}\label{defn:n}
	Given $\gamma \in N\setminus\{1\}$, consider the amalgamated free product decomposition of a conjugate of 
$N_{\alpha(\gamma)}$ 	containing $\gamma$, given by the second point of Theorem 
\ref{theo;statement_that_should_have_been_in_D_PS}. 

Consider the cyclic normal form of the conjugacy class $[\gamma]$, which 
is either an element of $\Gamma_v$ for some $v\in R_{\alpha(\gamma)} $, or a cyclic word $w=\ell_1 \ell_2 \dots 
\ell_{2r}$, where, for all $i$,  we have $\ell_{2i}\in  N_{\alpha(\gamma)-1}\setminus\{1\}$ and  $\ell_{2i+1} \in 
\Gamma_v\setminus\{1\}$ for some $v\in  R_{\alpha(\gamma)}$.   Let $n(\gamma)$ be the length of this 
cyclic normal form, namely    $n(\gamma)=1$ if $\gamma$ is conjugate into some         $\Gamma_v$ for some $v\in 
R_{\alpha(\gamma)} $, and it is  $n(\gamma)=2r$ for the above $r$ otherwise.
\end{defi}

\begin{rem}
Note that if $\gamma \in N_{\alpha(\gamma)}$ is seen as an element of  $N_{\alpha(\gamma) +1}$, 
the length of its cyclic normal form is $1$, but we do not set this in the notation $n(\gamma)$ since this notation is 
reserved to the amalgam decomposition of $N_{\alpha(\gamma)}$. This way, no $n(\gamma)$ has been multiply defined. We 
adopt the convention that $n(1) = 0$. 
\end{rem}

\subsection{Angles and shortenings}\label{subsec:shortenings}
The main point of the following proposition is to relate the normal form of $\gamma$ with vertices at which one sees a large 
projection between $\gamma$-translates.  The move that will allow us to shorten normal forms can be pictorially described as 
follows: Consider the axis of $\gamma$, a vertex $v$ on the axis, and an element $\gamma_v$ that stabilises $v$ and rotates 
an edge on the axis containing $v$ to the other such edge. Then $\gamma_v\gamma$ has shorter normal form than $\gamma$.

\begin{prop} [Angles and shortenings]\label{prop:shortening}
	Let $\gamma$ be an element of $N_{\alpha(\gamma)}$ with $n(\gamma)>1$.  Then all of the following hold:
	\begin{itemize}
		\item The element $\gamma$ is hyperbolic in the tree $T_{\alpha(\gamma)}$, and its axis in this tree 
contains some vertex $v_0$ in the $N_{\alpha(\gamma)}$-orbit of $R_{\alpha(\gamma)}$. 
		\item For all $v$ in the axis of $\gamma$ and in the $N_{\alpha(\gamma)}$-orbit of $R_{\alpha(\gamma)}$, 
		there exists $\gamma_v\in \G_v \setminus \{1\}$ such that the cyclic normal form of $[\gamma_v \gamma]$ is 
strictly shorter than that of $[\gamma]$. 
		\item If $w$ is a vertex in the tree $T_{\alpha(\gamma)}$,  then
		there exists a  vertex $v$  in the $N_{\alpha(\gamma)}$-orbit of $R_{\alpha(\gamma)}$ such that $v$ lies in 
the intersection of the interior of the segment $[w,\gamma w]$ of $T_{\alpha(\gamma)}$, and of the axis of $\gamma$.   
		Moreover, for all such  $v$, one has $d^\ang_v (w, \gamma w) > \Theta_{rot} - \Theta_0$. 
		\item For all  $\nu \in \bbY_{i(\gamma)}$,  there is a vertex $v$ in the $N_{\alpha(\gamma)}$-orbit of 
$R_{\alpha(\gamma)}$ on the axis of $\gamma$ such that         $d^\ang_v (\nu, \gamma \nu) >(\Theta_{rot} - 
\Theta_0)/2-\kappa$.
		\item Suppose that $j\neq i(\gamma)$ and let $\nu'\in\bbY_j$.   Then either there exists $v$ in the 
$N_{\alpha(\gamma)}$-orbit of $R_{\alpha(\gamma)}$ that is $\nu'$-inactive, and $\gamma_v\in\G_v$ so that $\gamma_v\gamma$ 
has shorter cyclic normal form than $\gamma$, or there exists $v$  that is active for $\nu'$ and $\gamma \nu'$, with the 
property that $d^\ang_v (\nu', \gamma \nu') >(\Theta_{rot} - \Theta_0)/2-2\Theta_0 -3\kappa$.		
	\end{itemize}
\end{prop}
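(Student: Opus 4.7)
My plan treats the five bullets in the order they appear, as each relies on the preceding. Bullet~1 is a Bass-Serre-theoretic observation: $n(\gamma)>1$ combined with the minimality in Definition~\ref{defn:alpha} rules out $\gamma$'s conjugacy into any vertex stabiliser of the amalgamated decomposition of $N_{\alpha(\gamma)}$, so $\gamma$ is hyperbolic on $T_{\alpha(\gamma)}$; bipartiteness of the tree then yields a vertex $v_0$ of the axis in the $N_{\alpha(\gamma)}$-orbit of $R_{\alpha(\gamma)}$. For Bullet~2, take $v$ on the axis in that orbit, let $v_1, v_3$ be the two $R_{\alpha(\gamma)}$-orbit vertices along the axis at tree-distance $2$ on either side of $v$, and use item~(3) of Theorem~\ref{theo;statement_that_should_have_been_in_D_PS} to get $\gamma_v \in \G_v$ with $d^\ang_v(v_1, \gamma_v v_3) \leq \Theta_0$. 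Pre-multiplying by $\gamma_v$ then folds the axis at $v$: written in cyclic normal form, the letter in $\G_v$ at $v$'s position merges into the amalgamating subgroup, enabling cancellation that strictly decreases $n$ (or drops $\alpha$, if the element becomes elliptic).

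The main technical step is Bullet~3, which extracts a genuine projection bound from this tree-theoretic structure. For $w$ in the $N_{\alpha(\gamma)}$-orbit of $R_{\alpha(\gamma)}$ (to which the general case reduces by looking at a nearby vertex of that orbit), hyperbolicity of $\gamma$ on $T_{\alpha(\gamma)}$ ensures that $[w, \gamma w]$ contains a sub-segment of the axis of length equal to the translation length of $\gamma$ (at least $2$), so it contains an $R_{\alpha(\gamma)}$-orbit vertex $v$ in its interior; let $v_1, v_3$ be the adjacent $R_{\alpha(\gamma)}$-orbit vertices on the axis, with $v_1$ between $w$ and $v$, and $v_3$ between $v$ and $\gamma w$. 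Item~(3) immediately gives $d^\ang_v(v_1, v_3) \geq \Theta_{rot} - \Theta_0$. The plan is to upgrade this to $d^\ang_v(w, \gamma w) \geq d^\ang_v(v_1, v_3)$ using the monotonicity property from \cite[Theorem~3.3]{BBF} recalled after Definition~\ref{def;CPS}: applied first at $v_1$ with $y=v_1$, $x=v_3$, $z=w$, the hypothesis $d_{v_1}(w, v_3) \geq \Theta$ being supplied by a further application of item~(3) to the triple $(w, v_1, v_3)$, it yields $d_v(v_1, v_3) \leq d_v(w, v_3)$, and then applied analogously at $v_3$ it gives $d_v(w, v_3) \leq d_v(w, \gamma w)$. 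Chaining these yields the bound. The main obstacle is verifying the monotonicity hypotheses, which requires carefully setting up additional triples on the axis to which item~(3) can be applied.

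For Bullet~4, fix any $R_{\alpha(\gamma)}$-orbit vertex $v$ on the axis and use item~(3) for the triple $(\gamma^{-1}v, v, \gamma v)$: $d^\ang_v(\gamma^{-1}v, \gamma v) \geq \Theta_{rot} - \Theta_0$. Expanding this via the approximate triangle inequality through the points $\gamma^{-1}\nu$ and $\gamma\nu$ and using $G$-equivariance of $d^\ang$, one gets
\[\Theta_{rot} - \Theta_0 \leq d^\ang_{\gamma v}(\nu, v) + d^\ang_v(\gamma^{-1}\nu, \gamma\nu) + d^\ang_{\gamma^{-1}v}(\nu, v) + 2\kappa.\]
The Behrstock inequality prevents the two ``side'' terms from being large simultaneously (otherwise $\nu$ would force the $d^\pi_v$-distance between $\gamma^{\pm1} v$ to be small, contradicting item~(3)); after splitting the remaining projection $d^\ang_v(\gamma^{-1}\nu, \gamma\nu)$ via $\nu$ as a mid-point, one of the resulting summands gives the bound $d^\ang_{v'}(\nu, \gamma\nu) > (\Theta_{rot} - \Theta_0)/2 - \kappa$ at $v$ or an axis-neighbour $v'$. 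The factor $1/2$ is the cost of that final splitting.

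Bullet~5 divides into the two alternatives of its statement. If some $R_{\alpha(\gamma)}$-orbit axis vertex is $\nu'$-inactive, Bullet~2 applied at that vertex furnishes the required shortening. Otherwise every such vertex is $\nu'$-active (and $\gamma\nu'$-active by equivariance). I would pass $\nu'$ to a proxy $(\nu')^t \in \bbY_{i(\gamma)}$ via Lemma~\ref{lem;transfert}, which by Remark~\ref{rem:either_way} satisfies $d^\ang_v(\nu', (\nu')^t) \leq \Theta_0 + \kappa$ at each active axis vertex, and by equivariance $d^\ang_v(\gamma\nu', \gamma(\nu')^t) \leq \Theta_0 + \kappa$ as well. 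Applying Bullet~4 to $(\nu')^t$ gives an axis vertex $v$ with $d^\ang_v((\nu')^t, \gamma(\nu')^t) > (\Theta_{rot} - \Theta_0)/2 - \kappa$, and a triangle-inequality step absorbing the two transfer errors produces the claimed bound $d^\ang_v(\nu', \gamma\nu') > (\Theta_{rot} - \Theta_0)/2 - 2\Theta_0 - 3\kappa$.
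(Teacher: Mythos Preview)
Your treatment of bullets 1, 2, and 5 is essentially correct and matches the paper's approach (the paper dispatches bullets 1--3 by citing general Bass--Serre facts together with item~(\ref{item:shortenings_to_alpha-1}) of Theorem~\ref{theo;statement_that_should_have_been_in_D_PS}). For bullet 3 you take an unnecessary detour: item~(\ref{item:shortenings_to_alpha-1}) applies to \emph{any} three $R_{\alpha}$--orbit vertices $v_1,v_2,v_3$ with $v_2\in[v_1,v_3]$, not just to consecutive ones, so one can take $v_1=w$, $v_2=v$, $v_3=\gamma w$ directly and read off $d^\ang_v(w,\gamma w)\ge\Theta_{rot}-\Theta_0$ without invoking monotonicity.

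Bullet 4, however, has a genuine gap. You fix an arbitrary axis vertex $v$ and derive
\[
\Theta_{rot}-\Theta_0 \le d^\ang_{\gamma v}(\nu,v) + d^\ang_v(\gamma^{-1}\nu,\gamma\nu) + d^\ang_{\gamma^{-1}v}(\nu,v) + 2\kappa.
\]
Your Behrstock argument correctly shows that the two side terms cannot both exceed $\kappa$, but it gives \emph{no} upper bound on the larger one. Take for instance $\nu=\gamma^N v$ with $N$ large: then $d^\ang_{\gamma v}(\nu,v)\ge\Theta_{rot}-\Theta_0$ by item~(\ref{item:shortenings_to_alpha-1}), which swamps the inequality, while the middle term $d^\ang_v(\gamma^{N-1}v,\gamma^{N+1}v)$ is small since $v\notin[\gamma^{N-1}v,\gamma^{N+1}v]$. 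So for an arbitrary $v$ your inequality yields nothing, and jumping to a single axis-neighbour $v'$ does not help either---the correct vertex may be $\gamma^{N}v$, arbitrarily far along the axis. The missing idea is that one must \emph{locate} the right axis vertex using the properness axiom. The paper does this by considering the full sequence $(\gamma^n v)_{n\in\bbZ}$, showing via Behrstock that $\{n:d^\ang_{\gamma^n v}(\gamma^{n-1}v,\nu)\le\kappa\}$ is a half-line $(n_0,\infty)$, and then using properness to rule out $n_0=-\infty$. At the transition index $n_0$ both relevant side terms are simultaneously controlled, and only then does the triangle-inequality computation go through.
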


\begin{proof}
	The first point is  a general fact for elements in amalgamated free products. 
	This is also true of the second point, in the specific setting where there is only one orbit of edges 
around $v$ under the action of its stabilizer, as is the case in our situation.   The first part of the third point is also 
general. The distance estimate of the third point follows from   Theorem \ref{theo;statement_that_should_have_been_in_D_PS} 
(\ref{item:shortenings_to_alpha-1}). 
	 
	For the fourth point, we can assume that $\nu$ is not a vertex of $T_{\alpha(\gamma)}$, for otherwise we can just 
use 
the third bullet.  Consider the bi-infinite sequence of points $(\gamma^n v)_{n\in \mathbb{Z}}$ for some vertex $v$ in the 
$N_{\alpha(\gamma)}$-orbit of $R_{\alpha(\gamma)}$ on the axis of $\gamma$. This sequence can be 
thought of as ranging over points of the axis of $\gamma$ in $T_{\alpha(\gamma)}$ or over points of $\bbY_{i(\gamma)}$.   
Notice that $d^\ang_{\gamma^nv}(\gamma^{n-1}v,\gamma^{n+1}v)\geq \Theta_{rot} - \Theta_0$ for every $n$, by the third point 
of Theorem \ref{theo;statement_that_should_have_been_in_D_PS}. One can then see, using 
the Behrstock inequality and induction, that the set $\{n\in \bbZ: d^\ang_{\gamma^n v} (\gamma^{n-1} 
v, \nu) \leq \kappa\}$ is an open interval $(n_0, +\infty )$, for $n_0 \in \mathbb{Z}\cup\{-\infty\}$ (notice that 
$d^\ang_{\gamma^n v} (\gamma^{n-1} 
v, \nu)$ is defined for each $n$). Moreover, $n_0$ 
cannot be equal to $ -\infty$, since in that case, for all negative $n$,  $\gamma^n v$ would be in 
$\bbY^i_{\Theta_{rot}-\Theta_0-\kappa} (\nu, \gamma v )$, contradicting the properness of the projection system. Thus,   
$d^\ang_{\gamma^{n_0} v} (\gamma^{n_0-1} v, \nu) > \kappa$, while 
	$d^\ang_{\gamma^{n_0+1} v} (\gamma^{n_0} v, \nu) \leq \kappa$. The Behrstock inequality again ensures that   
$d^\ang_{\gamma^{n_0-1} v} (\gamma^{n_0} v, \nu) \leq \kappa$.    After translation by 
$\gamma$, we have $  d^\ang_{\gamma^{n_0} v} (\gamma^{n_0+1} v, \gamma \nu) \leq \kappa $.  

There are two cases to consider. First, suppose that $d^\ang_{\gamma^{n_0} v} (\gamma^{n_0-1} v, \nu) \leq (\Theta_{rot} - 
\Theta_0)/2$. In that case, the triangle inequality gives $   d_{\gamma^{n_0} v}^\ang ( \nu, \gamma \nu ) \geq  
(\Theta_{rot} - \Theta_0)/2 -\kappa$ (we used $d^\ang_{\gamma^nv}(\gamma^{n-1}v,\gamma^{n+1}v)\geq \Theta_{rot} - 
\Theta_0$). 
  The second case is when $d^\ang_{\gamma^{n_0} v} (\gamma^{n_0-1} v, \nu) > 
(\Theta_{rot} - \Theta_0)/2$. Then $d_{\gamma^{n_0} v}^\ang ( \nu, \gamma^{-1} \nu ) \geq  (\Theta_{rot} - \Theta_0)/2 
-\kappa$ (we used $d^\ang_{\gamma^{n_0} v} (\gamma^{n_0-1} v, \gamma^{-1}\nu)=d^\ang_{\gamma^{n_0+1} v} (\gamma^{n_0} v, 
\nu) 
\leq \kappa$).   In both cases, we obtained the desired conclusion.
	
	Let us prove the fifth point. The first case is when $\G_{\nu'}$ has no fixed point in $\bbY_{i(\gamma)}$.  In that 
case,   the conjugate $\gamma \G_{\nu'} \gamma^{-1}$ has no fixed point either, since a fixed point for one would give a 
fixed 
point for the other by translation by $\gamma^{-1}$. Then for all $v\in \bbY_{i(\gamma)}$ (so in particular in the axis of 
$\gamma$ in the tree $T_{\alpha(\gamma)}$), $\nu'$ and $\gamma \nu'$ are $v$-active. We now consider $\nu'^t$ as in the 
transfer lemma, and
we apply the previous point for $\nu'^t$. There is $v$ on the axis of $\gamma$ such that $d^\ang_v (\nu'^t, \gamma 
\nu'^t) >(\Theta_{rot} - \Theta_0)/2-\kappa$. By Remark \ref{rem:either_way}, we have $d^\ang_v (\nu', \gamma \nu'^t) 
>(\Theta_{rot} - \Theta_0)/2-2\kappa -\Theta_0$. 

Notice that $d^\ang_v(\gamma\nu',\gamma\nu'^t)=d^\ang_{\gamma^{-1}v}(\nu',\nu'^t)$,  and the latter quantity is again 
bounded 
by $\Theta_0+\kappa$ by Remark \ref{rem:either_way}. Hence, we get $$d^\ang_v (\nu', \gamma \nu') >(\Theta_{rot} - 
 \Theta_0)/2-3\kappa -2\Theta_0.$$

 Let us now treat the case where $\G_{\nu'}$ has a fixed point in  $\bbY_{i(\gamma)}$. 
Pick one fixed point $\nu'^t$ and consider first  the case where $\gamma \nu'$ is not active for one of the vertices 
of the axis. Then, by the second bullet, one could shorten the length of $[\gamma]$ using the element $\gamma_v$ associated 
to this vertex in 
the normal form of $[\gamma]$, so we are done.
We may thus assume that $\gamma \nu'$ is $v$ active for all $v$ in the axis. It follows that $\nu'$ is 
active for all $v$ in the axis as well, and the argument of the previous case can be applied (in view of the ``moreover'' 
part of the transfer lemma).
\end{proof}

\subsection{Rotations to reduce complexity}\label{subsec:lower_complexity}
For convenience, let $\Thetashortenable =  (\Theta_{rot} -\Theta_0)/2 -2\Theta_0 -3\kappa$.

\begin{cor}[Rotating to reduce the complexity $(\alpha,n)$]\label{cor:short_less_complex}
	For all $\gamma\in N\setminus\{1\}$, and all $x\in \bbY_*$, there is $(s,\gamma_s)$ (here $s\in\bbY_*$ and 
$\gamma_s\in\G_s$)  so that  
$(\alpha(\gamma_s\gamma),n(\gamma_s\gamma))< (\alpha(\gamma),n(\gamma))$ in lexicographic order and either
	\begin{enumerate}
	 \item $x$ is $s$-inactive, or
	 \item $x$ and $\gamma x$ are $s$-active and $d^\ang_s(x,\gamma x)> \Thetashortenable$.	
	\end{enumerate}
\end{cor}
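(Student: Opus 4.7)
The plan is to split on the length $n(\gamma)$ of the cyclic normal form of $\gamma$, using Corollary~\ref{cor:large_multicolor_rotations} to handle $n(\gamma)=1$ and Proposition~\ref{prop:shortening} to handle $n(\gamma)>1$.

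If $n(\gamma)=1$, then by Definition~\ref{defn:n} and $G$-equivariance of the rotating family (the relation $g\Gamma_v g^{-1}=\Gamma_{gv}$), the element $\gamma$ itself lies in $\Gamma_s\setminus\{1\}$ for some $s\in\bbY_*$. I set $\gamma_s:=\gamma^{-1}$, so $\gamma_s\gamma=1$ has complexity $(0,0)$, strictly below $(\alpha(\gamma),n(\gamma))$. If $x$ is $s$-inactive, condition (1) holds. Otherwise $x,\gamma x\in\Act(s)$ (since $\Gamma_s<\mathrm{Stab}_G(s)$ preserves $\Act(s)$), and Corollary~\ref{cor:large_multicolor_rotations} gives $d^\ang_s(x,\gamma x)\geq \Theta_{rot}-2\Theta_0-4\kappa$, which exceeds $\Thetashortenable=(\Theta_{rot}-\Theta_0)/2-2\Theta_0-3\kappa$ as soon as $\Theta_{rot}$ is large enough; this reduces to $\Theta_{rot}>2\kappa-\Theta_0$, an easy arithmetic requirement.

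Suppose now $n(\gamma)>1$. I split according to whether $x\in\bbY_{i(\gamma)}$. If $x\in\bbY_{i(\gamma)}$, the fourth point of Proposition~\ref{prop:shortening} applied to $\nu:=x$ produces a vertex $v$ on the axis of $\gamma$, in the $N_{\alpha(\gamma)}$-orbit of $R_{\alpha(\gamma)}$, with $d^\ang_v(x,\gamma x)>(\Theta_{rot}-\Theta_0)/2-\kappa>\Thetashortenable$; both $x$ and $\gamma x$ are $v$-active by the first bullet of Definition~\ref{def;CPS}, since they all sit in $\bbY_{i(\gamma)}$. If instead $x\notin\bbY_{i(\gamma)}$, the fifth point of Proposition~\ref{prop:shortening} applied to $\nu':=x$ offers two alternatives: either some $v$ is $x$-inactive with its own packaged shortening $\gamma_v$ (giving condition (1) with $s=v$), or some $v$ on the axis is active for both $x$ and $\gamma x$ with $d^\ang_v(x,\gamma x)>\Thetashortenable$ (giving condition (2) with $s=v$), in which case the second point of Proposition~\ref{prop:shortening} supplies $\gamma_v\in\Gamma_v\setminus\{1\}$ with $n(\gamma_v\gamma)<n(\gamma)$. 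Either way, $(s,\gamma_s):=(v,\gamma_v)$ is the desired pair.

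For the complexity drop in the $n(\gamma)>1$ cases, $v$ lies in the $N_{\alpha(\gamma)}$-orbit of $R_{\alpha(\gamma)}\subset\bbY_{*_{\alpha(\gamma)}}$, so $\Gamma_v\leq N_{\alpha(\gamma)}$ by the second point of Theorem~\ref{theo;statement_that_should_have_been_in_D_PS}. Hence $\gamma_v\gamma\in N_{\alpha(\gamma)}$ and $\alpha(\gamma_v\gamma)\leq\alpha(\gamma)$: a strict drop closes the lex order directly, while equality is handled by $n(\gamma_v\gamma)<n(\gamma)$. The main piece of bookkeeping is the equivariance trick in the $n(\gamma)=1$ case, which converts ``$\gamma$ lies in a conjugate of $\Gamma_v$'' into the cleaner statement ``$\gamma\in\Gamma_s$''; beyond this, the corollary reduces to picking the correct bullet of Proposition~\ref{prop:shortening} and comparing its angle bound with the definition of $\Thetashortenable$.
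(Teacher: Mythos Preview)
Your argument is correct and follows the paper's strategy: split according to whether $x\in\bbY_{i(\gamma)}$ and invoke the fourth or fifth bullet of Proposition~\ref{prop:shortening}, together with the second bullet for the shortening element $\gamma_v$. Two small points of comparison are worth noting. First, you are \emph{more} careful than the paper in handling $n(\gamma)=1$ separately via Corollary~\ref{cor:large_multicolor_rotations}; the paper's proof tacitly applies Proposition~\ref{prop:shortening} throughout, even though that proposition is stated only for $n(\gamma)>1$, so your extra paragraph actually closes a small gap. Second, you are \emph{less} careful in that you skip the paper's opening move of conjugating so that $\gamma\in N_{\alpha(\gamma)}$ (rather than merely in an $N$-conjugate of it): Proposition~\ref{prop:shortening} is stated for $\gamma\in N_{\alpha(\gamma)}$, and your last paragraph's claim that $\gamma_v\gamma\in N_{\alpha(\gamma)}$ needs this reduction. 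This is a harmless omission---conjugate by $h\in N$, replace $x$ by $h^{-1}x$, run your argument, and conjugate back---but it should be stated.
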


\begin{proof} 
After conjugation by a suitable element $h$ and replacing $x$ with $h^{-1}x$, one can assume that $\gamma \in 
N_{\alpha(\gamma)}$. If $x\in \bbY_{i(\gamma)}$ then  we consider $v$ as in the fourth bullet from 
Proposition~\ref{prop:shortening} and set $s=v$. The cyclic 
normal form of $\gamma_s\gamma$ is shorter than that of $\gamma$ by the second bullet from Proposition~\ref{prop:shortening}. 
Otherwise, if $x\in \bbY_{j}$ with 
$j\neq i(\gamma)$, then by the fifth bullet from Proposition~\ref{prop:shortening} we either proceed as above, or we find 
some $x$-inactive $s$ so that $\gamma_s\gamma$ has shorter cyclic normal form.

In either case, the cyclic length of the conjugacy class is reduced. Either the result is still 
greater than $2$, and in that case $\alpha(\gamma_s\gamma)=\alpha(\gamma)$,  or it is reduced to $1$ (or $0$)  and $\gamma_s 
\gamma$ is actually conjugate into   $N_{\alpha-1}$. In the latter case, one has $\alpha(\gamma_s\gamma)<\alpha(\gamma)$.  
Thus $(\alpha(\gamma_s\gamma),n(\gamma_s\gamma))< (\alpha(\gamma),n(\gamma))$.
\end{proof}
Note that in the last case of the proof (in which $\alpha(\gamma_s\gamma)<\alpha(\gamma)$), the value of $n(\gamma_s\gamma)$ 
can be arbitrary.

\section{Lifting and projecting}\label{sec:lift_and_project} 
In this section, we describe how to lift quadrilaterals and triangles in $X/N$ to $X$.  This will allow us to prove the 
various statements referenced in the proof of Theorem~\ref{thm:quotient_proj_graph}.

\begin{assumption}\label{assump:const}
 We fix the notation of Theorem \ref{thm:quotient_proj_graph}, and fix constants 
as in Standing assumption \ref{assump:const_1}. Let $p:X\to X/N$ be the quotient 
map.  We assume that $3\Theta_{rot}/10>2C+3\kappa+5\Theta_0/2$.
\end{assumption}

\subsection{Lifting}
First, paths (and, more particularly, geodesics) lift:

\begin{lemma}
For each combinatorial path $\bar\gamma$ in $X/N$ starting at $\bar x$, and any point $x$ in the preimage of $\bar 
x$ (henceforth: a \emph{lift} of $x$),  there exists a combinatorial path in $X$ so that $p\circ\gamma=\overline\gamma$, 
which we call a \emph{lift} of $\bar\gamma$. Moreover, if $\bar\gamma$ is a geodesic, then so is $\gamma$.
\end{lemma}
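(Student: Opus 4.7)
The plan is to build the lift edge-by-edge by an elementary induction and then to deduce the geodesic statement from the obvious fact that the quotient map $p : X \to X/N$ is $1$-Lipschitz on the combinatorial metric.

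First I would unwind the definition of $X/N$: since $N \leq G$ acts on $X$ by simplicial automorphisms, $X/N$ is the graph whose vertex set is $X^{(0)}/N$ and whose edges are the $N$-orbits of edges of $X$. Given the combinatorial path $\bar\gamma = \bar x_0 \bar x_1 \cdots \bar x_k$ and a chosen lift $x_0$ of $\bar x_0$, I proceed inductively: assuming I have constructed $x_0, \ldots, x_{i-1}$ with $p(x_j) = \bar x_j$ and $x_{j-1}$ adjacent to $x_j$ in $X$, I want to produce $x_i$. By definition of $X/N$, there exist some lifts $x_{i-1}'$ of $\bar x_{i-1}$ and $x_i'$ of $\bar x_i$ with $\{x_{i-1}', x_i'\}$ an edge of $X$. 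Since $x_{i-1}$ and $x_{i-1}'$ are both lifts of $\bar x_{i-1}$, pick $n \in N$ with $n x_{i-1}' = x_{i-1}$ and set $x_i := n x_i'$. Then $x_i$ is a lift of $\bar x_i$ and $\{x_{i-1}, x_i\}$ is an edge of $X$, since $N$ acts by simplicial automorphisms. Concatenating produces the desired lift $\gamma$ of length $k$.

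For the second statement, the point is that $p$ sends each edge of $X$ either to an edge of $X/N$ (if its endpoints have distinct $N$-orbits) or collapses it to a single vertex (if some element of $N$ swaps its endpoints); in either case the image of a combinatorial path of length $\ell$ is a combinatorial path of length at most $\ell$. Now suppose $\bar\gamma$ is a geodesic of length $k$ and let $\gamma$ be the lift constructed above, ending at some $x_k$ with $p(x_k) = \bar x_k$. If $d_X(x_0, x_k) < k$, there is a shorter combinatorial path $\eta$ from $x_0$ to $x_k$ in $X$, and $p \circ \eta$ is a combinatorial path from $\bar x_0$ to $\bar x_k$ in $X/N$ of length at most the length of $\eta$, which is strictly less than $k$, contradicting that $\bar\gamma$ was a geodesic. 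Hence $\gamma$ is a geodesic.

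There is no real obstacle here: this is the standard equivariant graph quotient argument, and none of the projection-system or rotating-family machinery enters. The only thing worth being careful about is the possibility that $p$ identifies the endpoints of some edge of $X$ (producing a loop in $X/N$), but this only reduces path length under $p$ and therefore poses no problem for either the construction or the geodesic comparison.
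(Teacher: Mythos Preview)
Your proof is correct and follows exactly the same approach as the paper's own proof: lift edge-by-edge using that $N$ acts simplicially, then observe that lifting a geodesic yields a geodesic because $p$ is $1$-Lipschitz. The paper's version is a two-sentence sketch of precisely what you have written out in full.
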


\begin{proof}
In order to lift combinatorial paths it suffices to lift edges, given a lift of the starting point of the edge in 
the quotient.  This can be done since the action of $N$ on $X$ is simplicial. Lifting a geodesic yields a geodesic because 
the quotient map is 1-Lipschitz.
\end{proof}

The following proposition is the key to our approach to study $X/N$ and $G/N$: it allows us to lift geodesic triangles and 
quadrilaterals in $X/N$ to triangles/quadrilaterals in $G/N$, thereby allowing us to translate properties of $X$ (e.g. 
hyperbolicity) to properties of $X/N$.  By requiring the constant $\Theta_{rot}$ to 
be even larger, we could ensure that we can lift $n$-gons for any given $n$, but we will only need the cases $n=3$ and 
$n=4$.  The ``moreover'' part will only be needed for the WPD property.

\begin{prop}\label{prop:lift}
	For each geodesic quadrangle $\overline Q$ in $X/N$ there exists a geodesic quadrangle $Q$ in $X$ so that 
$p(Q)=\overline Q$. We call $Q$ a lift.
	
	Moreover, if the geodesics $[\bar v_1,\bar w_1]$, $[\bar v_2,\bar w_2]$ of $Q$ have lifts $[v_i,w_i]$ so that 
$d_s(v_i,w_i)\leq \Theta_{rot}/10$ whenever the quantity is defined,  then the lift $[v'_i,w'_i]$ of $[\bar v_i,\bar w_i]$ 
contained in $Q$ is an $N$-translate of $[v_i,w_i]$.
\end{prop}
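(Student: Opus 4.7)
The strategy is to lift the sides of $\overline Q$ one by one into a broken concatenation of four geodesics in $X$, and then iteratively modify this concatenation using the composite rotating family, each modification strictly lowering the well-ordered complexity $(\alpha(n),n(n))$ of the element $n\in N$ measuring how badly the concatenation fails to close.

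Fix a lift $v_1$ of the base vertex of $\overline Q$ and lift the four sides in order to obtain geodesics $[v_1,v_2],[v_2,v_3],[v_3,v_4],[v_4,v_5]$ with $v_5=nv_1$ for some $n\in N$. If $n=1$ we are done, so induct on $(\alpha(n),n(n))$. Apply Corollary \ref{cor:short_less_complex} to $\gamma=n$ with $x=v_1$, obtaining $(s,\gamma_s)$ with $(\alpha(\gamma_s n),n(\gamma_s n))<(\alpha(n),n(n))$ and either (a) $v_1\notin\Act(s)$, or (b) $v_1,nv_1\in\Act(s)$ with $d^\ang_s(v_1,nv_1)>\Thetashortenable$. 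In case (a), $\gamma_s$ fixes $v_1$, so applying $\gamma_s$ to everything past $v_1$ gives a new broken lift of $\overline Q$ with closing element $\gamma_s n$ of smaller complexity, completing the inductive step.

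In case (b), the key task is to locate a vertex $u$ on the broken concatenation---possibly interior to one of the four segments---that is fixed by $\gamma_s$, meaning $u$ is $s$-inactive, equal to $s$, or at $X$-graph-distance $1$ from $s$. If some corner $v_j$ is $s$-inactive, take $u=v_j$; otherwise all corners lie in $\Act(s)$, and the approximate triangle inequality for $d^\ang_s$ forces $d^\ang_s(v_i,v_{i+1})\ge (\Thetashortenable-3\kappa)/4$ for some $i$. Applying Lemma \ref{lem;transfert} and Remark \ref{rem:either_way} to replace $v_i,v_{i+1}$ by transferred representatives in $\bbY_{i(s)}$ with bounded projection loss, one obtains a $d_s$-distance exceeding the BGIT constant $C$ thanks to Assumption \ref{assump:const}. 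BGIT, combined with the $\delta$-hyperbolicity of $X$, then produces a vertex $u$ on the segment $[v_i,v_{i+1}]$ with $d_X(u,s)=1$. Rotating the tail of the broken path past $u$ by $\gamma_s$ yields a new broken lift: the modified $i$-th side $[v_i,u]\cup[u,\gamma_s v_{i+1}]$ has length $d_X(v_i,u)+d_X(u,v_{i+1})=d_X(v_i,v_{i+1})=d_{X/N}(\bar v_i,\bar v_{i+1})$, so it is an $X$-geodesic, and because $p\circ\gamma_s=p$ its projection is exactly $[\bar v_i,\bar v_{i+1}]$. The new closing element $\gamma_s n$ has strictly smaller complexity, and induction terminates.

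The main obstacle is case (b): converting the projection-type bound $d^\ang_s(v_1,nv_1)>\Thetashortenable$ into the $X$-metric assertion that $s$ has a graph-neighbor on the broken concatenation. This is precisely what Assumption \ref{assump:const} is calibrated for: after triangle inequality and the transfer lemma, enough projection survives to trigger BGIT on a single side. For the \emph{moreover} part, start the procedure with the specified $[v_i,w_i]$ ($i=1,2$) as the initial lifts of sides $1$ and $2$. At every iterative step, if BGIT were to produce a fixed vertex $u$ on side $1$ or $2$, then $d_s$ along that side would have to exceed $C$, contradicting $d_s(v_i,w_i)\le \Theta_{rot}/10$ provided $\Theta_{rot}$ is sufficiently large (as arranged in Assumption \ref{assump:const}). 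Hence every rotation is applied with $u$ on sides $3$ or $4$, leaving sides $1$ and $2$ unchanged beyond overall $N$-translation; the resulting lifts $[v'_i,w'_i]$ in $Q$ are therefore $N$-translates of $[v_i,w_i]$.
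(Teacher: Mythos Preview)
Your argument follows the same inductive scheme as the paper's proof: lift the four sides, induct on $(\alpha(\gamma),n(\gamma))$ for the closing element $\gamma$, apply Corollary~\ref{cor:short_less_complex}, and in each case locate a $\gamma_s$--fixed vertex on the broken concatenation past which one rotates. The case split (an $s$--inactive corner versus all corners $s$--active) also matches the paper.

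There is, however, one step that does not work as written. In the all-active subcase you invoke the transfer lemma to replace $v_i,v_{i+1}$ by points $v_i^t,v_{i+1}^t\in\bbY_{i(s)}$, obtain $d_s(v_i^t,v_{i+1}^t)>C$, and then claim BGIT yields a vertex on $[v_i,v_{i+1}]$ adjacent to $s$. But BGIT applied to $v_i^t,v_{i+1}^t$ produces a vertex on a geodesic $[v_i^t,v_{i+1}^t]$, not on $[v_i,v_{i+1}]$; transferred points can be arbitrarily far from the originals in $X$, and $\delta$--hyperbolicity does not bridge this. The paper avoids this entirely: it applies BGIT directly to the relevant side, treating $d_s$ and $d^\ang_s$ interchangeably (recall $|d_s-d^\pi_s|\le\kappa$ where both are defined, and BGIT is stated for arbitrary $x,y,s\in\bbY_*$). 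So the transfer lemma is unnecessary, and your detour through it does not actually deliver the conclusion you need.

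A second, smaller imprecision occurs in the ``moreover'' part. You argue that if the rotation point $u$ fell on side~1 or~2 then $d_s$ there would exceed $C$, contradicting $d_s(v_i,w_i)\le\Theta_{rot}/10$. But $C$ may well be much smaller than $\Theta_{rot}/10$, so there is no contradiction. The actual reason one may always choose the large side to be side~3 or~4 is the triangle inequality: sides~1 and~2 together contribute at most $2\Theta_{rot}/10$, so sides~3 and~4 must together exceed $\Thetashortenable-\Theta_{rot}/5-O(\kappa)$, and hence one of them exceeds $C$ by Assumption~\ref{assump:const}. This is what the paper means by ``the pair giving a large projection can be chosen not in $\{(v_i,w_i)\}$''.
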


\begin{proof}
	Let $\bar x, \bar y,\bar z, \bar t$  be the vertices of a quadrangle in
	$X/N$. Lift $\bar x$ as $x\in X$, and lift all four geodesic segments
	to get geodesics $[x,y], [y,z], [z,t], [t,x']$. In the setting of the ``moreover'' part, choose an $N$-translate of 
$[v_i,w_i]$ as the lift of $[\bar v_i,\bar w_i]$. There is an element
	$\gamma \in N$ such that $\gamma x = x'$. We argue, by transfinite induction on
	the pair $(\alpha(\gamma), n(\gamma))$ (for lexicographic order). 
	
	If  $(\alpha(\gamma), n(\gamma)) = (0, n)$,  then $\gamma=1$ and $x'=x$, so we are done. 
	We thus assume that  $(\alpha(\gamma), n(\gamma)) = (\alpha, n)$, with $\alpha>0$.
	
	Let us consider $(s,\gamma_s)$ as in Corollary \ref{cor:short_less_complex}. 
	If $x$ is $s$-inactive, we have $\gamma_sx=x$. We can then apply $\gamma_s$ to all lifts, and conclude by induction 
hypothesis.

Otherwise, we have $d_s(x, 
\gamma x) > \Thetashortenable$.  Viewing $s$ as a vertex of $X$, and using assumption~\eqref{item:BGIT} (BGIT) of 
the definition of composite projection graph, we see that in $X$ the 
geodesic $[x,x']$ contains a point at distance $1$ from $s$, and fixed by $\Gamma_s$. 
	
	There are several cases. The first one is when all points $x,y,z,t,x'$ are active for $s$. Recall that $d_s(x,x') >  
\Thetashortenable$, and  by the triangle inequality and our assumption on $\Theta_{rot}$, at least one  of  $ d_s(x,y), 
d_s(y,z), d_s(z,t)$ or   $d_s(t,x') $ is larger than the constant $C$ from BGIT (and, in the setting of the moreover part, 
the pair giving a large projection can be chosen not in $\{(v_i,w_i)\}$). 
	
	Let $(u,u')$ (not in $\{(v_i,w_i)\}$) be a pair among the aforementioned ones such that $d_s(u,u') >C$. 
	By BGIT, it follows that $[u,u']$ contains a point at distance $\leq 1$ from $s$, hence fixed by $\gamma_s$. 
Replacing $u'$ and all the points in $(x,y,z,t,x')$ after the position of $u'$ by their image by $\gamma_s$ produces new 
isometric lifts of the segments, in such a way that the endpoint $\gamma_s x'$ now differs from $x$ by $\gamma_s\gamma$, 
see Figure \ref{fig:rotate_lift}.
	Also, in the setting of the moreover part, notice that we replaced each $[v_i,w_i]$ by an $N$-translate. The 
induction hypothesis allows us to conclude.
	
	\begin{figure}[h]
	\centering
	 \includegraphics[width=0.8\textwidth]{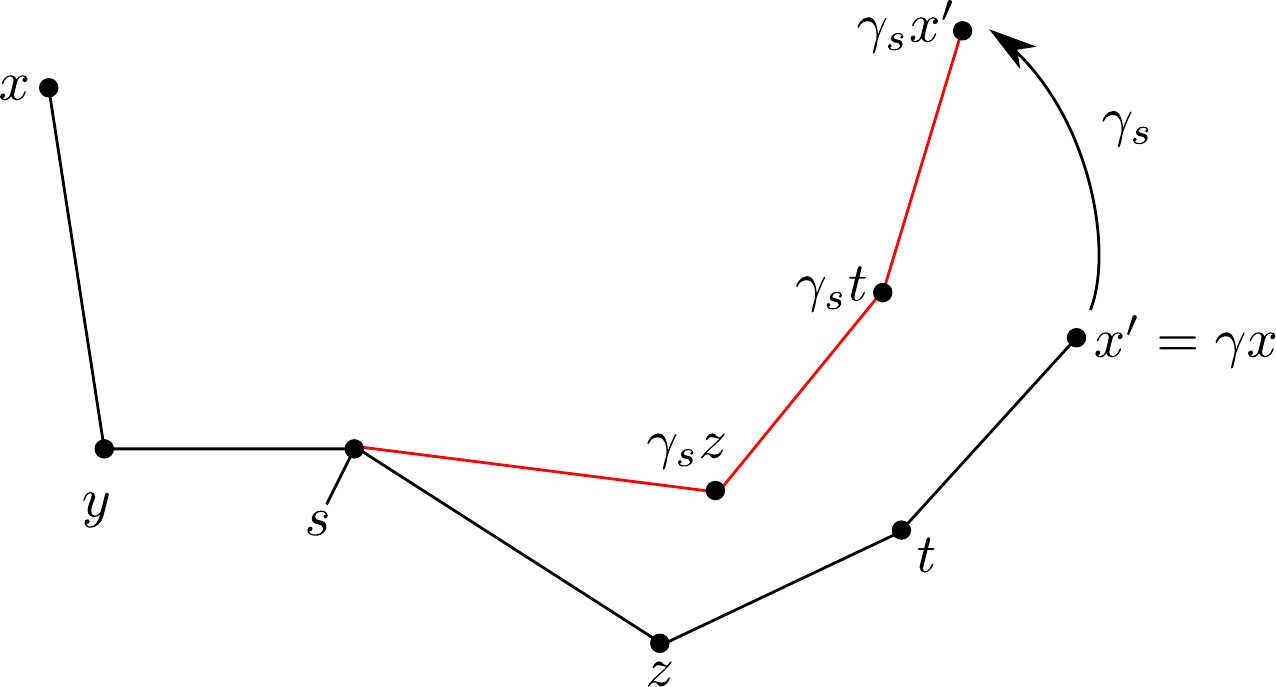}
	 \caption{The case that $s$ is adjacent to a vertex on $[y,z]$. In all cases, $s$ is adjacent to some vertex on one 
of the sides (possibly one of $x,y,z,t,x'$).}\label{fig:rotate_lift}
	\end{figure}
	
	The second case is when not all  $x,y,z,t,x'$ are active for $s$. The argument is similar. Let $u$ be the first point 
in the tuple that is inactive for $s$. This implies that $\gamma_s$ fixes $u$. Changing the lifts of all the elements after 
$u$ by their images by $\gamma_s$ does not change the property that the lifts of segments are isometric. The conclusion is 
the same.
\end{proof}

\subsection{Projecting}
The following lemma says, informally, that directions with small projection angles in $X$ are preserved by $p$.

\begin{lemma}\label{lift;segments_witout_angles}
Suppose that $x,y\in X^{(0)}$ have the property that $d_s(x,y)<\Theta_{rot}/10$ whenever the quantity is defined. 
Then $p|_{[x,y]}$ is isometric, for any geodesic $[x,y]$.
\end{lemma}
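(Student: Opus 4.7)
The plan is to argue by contradiction. Suppose $p|_{[x,y]}$ is not isometric; since $p$ is $1$-Lipschitz, this forces $d_{X/N}(p(x),p(y)) < L := d_X(x,y)$, so lifting a geodesic in $X/N$ from $p(x)$ to $p(y)$ starting at $x$ (via the preceding lemma) yields a geodesic $\sigma$ in $X$ from $x$ to $gy$ for some $g \in N \setminus \{1\}$ with $|\sigma| =: \ell < L$. Among all $g \in N \setminus \{1\}$ satisfying $d_X(x,gy) < L$, I pick one of minimal complexity $(\alpha(g), n(g))$ in the lexicographic order, and I seek a contradiction to this minimality.

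I apply Corollary~\ref{cor:short_less_complex} to $g$ with input vertex $x$, obtaining $(s, \gamma_s)$ with $(\alpha(\gamma_s g), n(\gamma_s g)) < (\alpha(g), n(g))$ and one of two alternatives. If $x$ is $s$-inactive, then $\gamma_s$ fixes $x$, so $d_X(x, \gamma_s g y) = d_X(\gamma_s^{-1}x, gy) = d_X(x, gy) < L$; hence $\gamma_s g$ is a witness of strictly smaller complexity, violating minimality. The only loophole is $\gamma_s g = 1$, i.e.\ $g = \gamma_s^{-1}$; but then $\gamma_s^{-1}$ fixes the $s$-inactive vertex $x$, so $gx = x$, and since $g$ is an isometry we obtain $d_X(x, gy) = d_X(gx, gy) = d_X(x, y) = L$, contradicting $\ell < L$.

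In the other alternative, $x$ and $gx$ are $s$-active and $d^\ang_s(x, gx) > \Thetashortenable$. Combining the approximate triangle inequality with the $G$-equivariance identity $d^\ang_s(gy, gx) = d^\ang_{g^{-1}s}(y, x)$ and the hypothesis $d_{g^{-1}s}(y, x) < \Theta_{rot}/10$ (which is defined exactly when $gy \in \Act(s)$, i.e.\ $y \in \Act(g^{-1}s)$; the complementary case $gy \notin \Act(s)$ is even easier, since then $\gamma_s$ already fixes $gy$ so $\gamma_s g y = gy$), I deduce $d^\ang_s(x, gy) > \Thetashortenable - \Theta_{rot}/10 - \kappa$. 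By Assumption~\ref{assump:const} this exceeds the BGIT constant $C$, so BGIT yields a vertex $w$ on $\sigma$ at distance $1$ from $s$, necessarily fixed by $\gamma_s$. Rotating the subsegment $[w, gy]$ of $\sigma$ by $\gamma_s$ produces a path from $x$ to $\gamma_s g y$ of length at most $\ell$. As before, if $\gamma_s g \neq 1$ this contradicts minimality, and if $\gamma_s g = 1$ then the same path is actually a path from $x$ to $y$ of length $\leq \ell < L$, contradicting $d_X(x, y) = L$.

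The main delicate point is keeping careful track of which projection distances are defined in each sub-case — which requires invoking the ``closeness in inaction'' axiom and the $G$-equivariance of active sets at the right moments — and checking that the constants fixed in Assumption~\ref{assump:const} really do leave enough slack to apply BGIT to the lift $\sigma$; modulo these bookkeeping issues, everything is a routine consequence of Corollary~\ref{cor:short_less_complex} and the rotation trick from the proof of Proposition~\ref{prop:lift}.
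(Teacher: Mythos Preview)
Your proof is correct and follows essentially the same strategy as the paper's: both argue by (transfinite induction / minimality of the complexity $(\alpha,n)$) on the element of $N$ relating the two endpoints, invoke Corollary~\ref{cor:short_less_complex}, and use BGIT to find a pivot point on the lifted short geodesic where the rotation $\gamma_s$ can be applied. The only cosmetic difference is that the paper lifts the short path starting at $y$ (obtaining $[y,\gamma x]$), so the small-projection hypothesis $d_s(x,y)<\Theta_{rot}/10$ is invoked directly at $s$, whereas you lift from $x$ (obtaining $[x,gy]$) and must pass through the equivariance $d^\ang_s(gx,gy)=d^\ang_{g^{-1}s}(x,y)$ to use the hypothesis at $g^{-1}s$; this makes the paper's version marginally more direct but is not a substantive difference.
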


\begin{proof}
Assume there is a shorter path from $\bar x=p(x)$ to $\bar y=p(y)$, for some $x,y$ as in the statement.  Lift this 
shorter path as a geodesic segment $[y,x']$. There exists $\gamma \in N$ such that $\gamma x = x'$. Again we proceed by 
induction on $(\alpha(\gamma), n(\gamma))$, this time to prove that  $d_X(y,x') = d_X(y,x)$  (thus falsifying that $d_{X/N} 
(\bar x,  \bar y) <  d_{X} (x,y)$), for any pair $x,y$ as in the statement. If $\alpha(\gamma) = 0$ then $\gamma= 1$ and it 
is obvious.

Assume that  $\alpha(\gamma) >0$. Let $(s,\gamma_s)$ be as in Corollary \ref{cor:short_less_complex}.  If $x$ 
is $s$-inactive, then $\gamma_s x=x$, and we can apply $\gamma_s$ to both geodesics and conclude by the induction hypothesis.
	
Otherwise, $d_s(x,\gamma x)>\Thetashortenable$,  and in particular there exists $s'$ on $[x,x']$ 
at distance $1$ from $s$. Since $d_s(x,y)$ is assumed to be small (if defined) we also have that $s' \in [y,x']$ (actually 
this is also true if $d_s(x,y)$ is not defined: in that case, $s'=y$). Thus, one can change the lift of $[\bar y, x]$ as 
$[y, \gamma_s x']$, while keeping it an isometric lift. One concludes by induction hypothesis, which applies to  
$\gamma_s\gamma$.
\end{proof}

\subsection{Properties of $G\curvearrowright X$ persisting for $G/N\curvearrowright X/N$}
We can now prove the various statements referenced in the proof of Theorem~\ref{thm:quotient_proj_graph}, thereby completing 
the proof of that theorem.

First, lifting geodesic triangles from $X/N$ to $X$ using Proposition~\ref{prop:lift}, we deduce:

\begin{cor}\label{cor:quotient_hyperbolic}
	$X/N$ is $\delta$-hyperbolic, where $\delta$ is the hyperbolicity constant of $X$.
\end{cor}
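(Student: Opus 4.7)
The plan is to reduce hyperbolicity of $X/N$ to hyperbolicity of $X$ via the lifting result of Proposition~\ref{prop:lift}, using the 1-Lipschitz property of $p$ in one direction and the lift of geodesic triangles in the other.

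More precisely, I would verify the thin-triangle condition in $X/N$. Fix a geodesic triangle $\bar T$ in $X/N$ with vertices $\bar x,\bar y,\bar z$, and view it as a degenerate geodesic quadrangle with vertices $\bar x,\bar y,\bar z,\bar z$ whose fourth side is the constant (length zero) geodesic at $\bar z$. Proposition~\ref{prop:lift} produces a geodesic quadrangle $Q$ in $X$ with $p(Q)=\bar T$. Since the fourth side of $\bar T$ (viewed as a quadrangle) is trivial, the corresponding lifted side is a length-zero geodesic in $X$, so $Q$ is in fact a genuine geodesic triangle $T\subset X$ with vertices $x,y,z$ lifting $\bar x,\bar y,\bar z$. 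The sides of $T$ are geodesics of $X$ that project under $p$ onto the corresponding sides of $\bar T$.

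Because $X$ is $\delta$-hyperbolic, the triangle $T$ is $\delta$-thin: any point on one side of $T$ lies within distance $\delta$ in $X$ of the union of the other two sides. Since $p\colon X\to X/N$ is $1$-Lipschitz (being a quotient by simplicial automorphisms), applying $p$ preserves this thinness, so every point on a given side of $\bar T$ lies within distance $\delta$ in $X/N$ of the union of the other two sides. Hence $\bar T$ is $\delta$-thin in $X/N$, establishing $\delta$-hyperbolicity.

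The only subtle point is the degeneration of the quadrangle to a triangle, which requires that the proof of Proposition~\ref{prop:lift} (or its statement, suitably interpreted) tolerates trivial sides; this is transparent from the inductive argument, since a length-zero segment gives no constraints and any rotation applied to its endpoints preserves the property of being a lift. The assumption on $\Theta_{rot}$ from Standing assumption~\ref{assump:const} is already in force, so no new constant needs to be tuned. Thus there is essentially no obstacle to carrying the argument through.
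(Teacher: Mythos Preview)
Your proof is correct and matches the paper's approach exactly: the paper's proof is the single line ``lifting geodesic triangles from $X/N$ to $X$ using Proposition~\ref{prop:lift}, we deduce [the corollary]'', and you have simply spelled out the implicit details (thinness of the lifted triangle in $X$, then pushing forward via the $1$-Lipschitz map $p$). The paper explicitly notes just before Proposition~\ref{prop:lift} that the cases $n=3$ and $n=4$ are both intended, so your handling of the degeneration to a triangle is consistent with their usage.
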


Next, we investigate survival of loxodromic (resp. WPD) elements of $G$ as loxodromic (resp. WPD) elements of $G/N$:

\begin{prop}\label{prop:preserve_lox}
Assume that $\phi\in G$ is a loxodromic isometry of $X$, and that there is $x_0\in X$  such that for all $n\in 
\mathbb{Z}$ for all $s\in X$,  we have $d_s(x_0, \phi^n x_0)<\Theta_{rot}/10$ whenever it is defined. Then, $\bar \phi \in 
G/N$   is loxodromic on $\bar X = X/N$. Moreover, if $\phi$ has the WPD property, then so does $\bar\phi$.
\end{prop}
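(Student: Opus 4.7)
The plan is to transfer both loxodromy and the WPD property from $(G,X)$ to $(G/N,X/N)$ by exploiting the small-projection hypothesis on the orbit of $\phi$: this hypothesis is precisely what is needed to apply Lemma~\ref{lift;segments_witout_angles} and the ``moreover'' clause of Proposition~\ref{prop:lift}.

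For loxodromy, I first note that the small-projection hypothesis lets me apply Lemma~\ref{lift;segments_witout_angles} to any geodesic $[x_0,\phi^n x_0]$, so $p$ is isometric on such geodesics. Thus $d_{X/N}(\bar x_0,\bar\phi^n \bar x_0) = d_X(x_0,\phi^n x_0)$, and by equivariance of the $G/N$-action $d_{X/N}(\bar\phi^k \bar x_0,\bar\phi^{k+n}\bar x_0)=d_X(x_0,\phi^n x_0)$ for all $k,n$. Since $\phi$ acts loxodromically on $X$, the orbit $(\phi^n x_0)$ is a quasigeodesic, and the identity above forces $(\bar\phi^n \bar x_0)$ to be a quasigeodesic in $X/N$ with the same constants. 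Hence $\bar\phi$ is loxodromic on $X/N$.

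For the WPD property, fix $r\geq 0$ and let $n_0$ be the WPD constant for $\phi$ acting on $X$ relative to $x_0$ and $r$. For $n\geq n_0$, consider any $\bar h\in G/N$ with $d_{X/N}(\bar x_0,\bar h\bar x_0)\leq r$ and $d_{X/N}(\bar\phi^n\bar x_0,\bar h\bar\phi^n\bar x_0)\leq r$, and fix any lift $h_0\in G$ of $\bar h$. I would then form the geodesic quadrilateral in $X/N$ with vertices $\bar x_0,\bar\phi^n\bar x_0,\bar h\bar\phi^n\bar x_0,\bar h\bar x_0$, and choose as distinguished lifts of its two long sides the $X$-geodesics $[x_0,\phi^n x_0]$ and $h_0[x_0,\phi^n x_0]$. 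By the hypothesis on $\phi$ together with the $G$-equivariance $d^\ang_{gs}(gu,gv)=d^\ang_s(u,v)$ built into the composite projection system, both specified lifts satisfy the projection bound $\Theta_{rot}/10$ required to invoke the moreover part of Proposition~\ref{prop:lift}.

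Applying that moreover part, the lifted quadrilateral in $X$ must have its two long sides equal to $N$-translates $n_1[x_0,\phi^n x_0]$ and $n_2 h_0[x_0,\phi^n x_0]$ for some $n_1,n_2\in N$, and its two short sides are $X$-geodesics of length $\leq r$ joining $n_1 x_0$ to $n_2 h_0 x_0$ and $n_1\phi^n x_0$ to $n_2 h_0\phi^n x_0$ respectively. Setting $h:=n_1^{-1}n_2 h_0$, one checks that $\bar h=\bar h_0$ and $d_X(x_0,h x_0), d_X(\phi^n x_0, h\phi^n x_0)\leq r$, so $h$ lies in the (finite) WPD-witnessing set of $\phi$ in $X$ at level $n$. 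Distinct $\bar h$'s produce distinct cosets modulo $N$ and hence distinct elements $h$, so only finitely many such $\bar h$ can arise, establishing the WPD property for $\bar\phi$. The main subtlety to monitor is the verification that the $h_0$-translate of the distinguished lift still obeys the small-projection bound; this is immediate from $G$-equivariance of $d^\ang$, but must be invoked explicitly.
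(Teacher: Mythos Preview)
Your proof is correct and follows essentially the same approach as the paper's: loxodromy via Lemma~\ref{lift;segments_witout_angles}, and WPD by lifting the relevant quadrilateral using the ``moreover'' clause of Proposition~\ref{prop:lift} to produce a lift $h\in G$ of each $\bar h$ lying in the finite WPD set for $\phi$. The only minor difference is that you verify the WPD condition directly for arbitrary $r$ at the basepoint $\bar x_0$, whereas the paper checks it only for $r=100\delta$ and then cites \cite[Proposition~5.31]{DGO} to conclude; your route is slightly more self-contained.
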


\begin{proof}
Lemma \ref{lift;segments_witout_angles} guarantees that $d_{\bar X} (\bar x_0, {\bar \phi}^n \bar x_0)$ grows 
linearly, thus $\bar \phi$ is loxodromic.

By the WPD property, we may take $n$ such that $d(x_0, \phi^n x_0  )$ is so large that, for a certain $k_0$,  at 
most $k_0$ elements of $G$ move the pair $(x_0, \phi^n x_0)$ at distance $\leq 100\delta$ from itself.  Call $ x_n=  { 
\phi}^n  x_0$.
	
Assume that $\{\bar \phi_i, i=1, \dots, k\}$ is a collection of distinct elements that move each  of $\bar x_0, \bar 
x_n$ within distance $100\delta$ from itself. Notice that each $\bar\phi_i[\bar x_0,\bar x_n]$ has a lift with small 
projections as in the moreover part of Proposition \ref{prop:lift}, namely $\phi'_i[x_0,x_n]$ for some $\phi'_i$ in the 
preimage of $\bar \phi_i$ (where we choose $[\bar x_0,\bar x_n]=p([x_0,x_n])$). Hence, there is a lift of a quadrilateral  
$(\bar x_0, \bar \phi_i \bar x_0, \bar \phi_i \bar x_n, \bar x_n)$ of the form $(x_0, \phi_i x_0, \phi_i x_n, x_n)$, for 
some $\phi_i$ in the preimage of $\bar \phi_i$. (We took an $N$-translate of the quadrilateral from Proposition 
\ref{prop:lift} to make sure that one side is $[x_0,x_n]$.)
	
By definition of $k_0$,  if $k> k_0$, then there are $i\neq j$ such that $\phi_i= \phi_j$. Projecting in $G/N$,  
$\bar\phi_i= \bar\phi_j$, which is a contradiction. This forces $k\leq k_0$. This holds for all sufficiently large $n$.  By, 
for example, the proof of \cite[Proposition 
5.31]{DGO}, this is sufficient to ensure 
the WPD property for $\bar \phi$.
\end{proof}

Finally, we see that non-elementarity persists:

\begin{lemma}\label{lem:non_elem}
Suppose that there exist independent loxodromic isometries $\phi,\psi\in G$ of $X$, and $x_0\in X$ such that 
for all $n,m\in \mathbb{Z}$ for all $s\in X$,  $d_s(\phi^n x_0, \psi^m x_0)<\Theta_{rot}/10$ whenever it is defined. 
Then the action of $G/N$ on $X/N$ is non-elementary.
\end{lemma}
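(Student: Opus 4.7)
The plan is to verify that $\bar\phi$ and $\bar\psi$ are themselves independent loxodromic isometries of $X/N$, which immediately gives non-elementarity.

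First, I would apply Proposition~\ref{prop:preserve_lox} separately to $\phi$ (using the hypothesis with $m=0$, which gives $d_s(x_0,\phi^n x_0)<\Theta_{rot}/10$ whenever defined) and symmetrically to $\psi$ (using $n=0$). This yields that both $\bar\phi$ and $\bar\psi$ act loxodromically on $X/N$, with common basepoint $\bar x_0=p(x_0)$.

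Next, I would invoke Lemma~\ref{lift;segments_witout_angles} applied to $x=\phi^n x_0$ and $y=\psi^m x_0$: the projection bound in the hypothesis is exactly what the lemma requires, so $p$ restricts to an isometry on any geodesic $[\phi^n x_0,\psi^m x_0]$, yielding the key identity
\[
d_{X/N}(\bar\phi^n \bar x_0,\bar\psi^m \bar x_0) \;=\; d_X(\phi^n x_0,\psi^m x_0) \qquad \text{for all } n,m\in\mathbb{Z}.
\]

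Finally, since $\phi,\psi$ are independent loxodromics on the $\delta$-hyperbolic space $X$, for every $R\ge 0$ only finitely many pairs $(n,m)\in\mathbb{Z}^2$ satisfy $d_X(\phi^n x_0,\psi^m x_0)\le R$: the two orbits track quasi-axes with four distinct endpoints in $\partial X$, so the inter-orbit distance blows up as $|n|+|m|\to\infty$. By the displayed identity this finiteness transfers verbatim to $X/N$. On the other hand, if $\bar\phi$ and $\bar\psi$ shared any point $\xi\in\partial(X/N)$, then after choosing appropriate signs their forward orbits would be quasigeodesic rays in $X/N$ both converging to $\xi$; standard hyperbolic geometry provides bounded-Hausdorff fellow-traveling of such rays, yielding infinitely many pairs $(n,m(n))$ with $|n|+|m(n)|\to\infty$ but $d_{X/N}(\bar\phi^n \bar x_0,\bar\psi^{m(n)} \bar x_0)$ uniformly bounded, contradicting the transferred finiteness. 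Hence $\bar\phi,\bar\psi$ have disjoint fixed-point sets at infinity and so are independent. The principal technical step is the isometric transfer via Lemma~\ref{lift;segments_witout_angles}; the boundary analysis is then routine hyperbolic geometry, with the mild subtlety that a single shared boundary point is not excluded a priori in a general hyperbolic graph but is still ruled out by the fellow-traveling argument.
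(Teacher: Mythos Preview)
Your proof is correct and follows essentially the same approach as the paper: both use Proposition~\ref{prop:preserve_lox} to get loxodromicity of $\bar\phi,\bar\psi$, and Lemma~\ref{lift;segments_witout_angles} to transfer orbit distances isometrically from $X$ to $X/N$. The paper simply asserts that this isometric transfer on $\langle\phi\rangle x_0\cup\langle\psi\rangle x_0$ yields independence, while you spell out the underlying finiteness/fellow-traveling argument; this is just filling in a standard step that the paper leaves to the reader.
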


\begin{proof}
By Proposition \ref{prop:preserve_lox}, $\bar\phi$ and $\bar\psi$ are loxodromic on $X/N$. Moreover, 
by Lemma \ref{lift;segments_witout_angles}, $p$ restricted to $\langle\phi\rangle x_0\cup \langle \psi\rangle x_0$ 
is an isometry, so that $\bar\phi$ and $\bar\psi$ are in fact independent.
\end{proof}

\subsection{Compatibility with stabilisers}
The following proposition is not used in the proof of Theorem \ref{thm:quotient_proj_graph}, but it is useful in 
applications.  It describes the structure of stabilisers of $X/N$, which turn out to be exactly what one expects.

\begin{prop}\label{prop:stab}
Under the Standing Assumption \ref{assump:const}, the following holds.	For any vertex $v$ of $X$,
	$${\rm Stab}(v)\cap N=\langle \{\Gamma_w\cap {\rm Stab}(v): w\in\bbY_*\} \rangle = \langle \{\Gamma_w: w\in\bbY_* 
\setminus \Act(v)\} \rangle.$$
\end{prop}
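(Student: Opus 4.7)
The plan is to prove the three groups $A = {\rm Stab}(v)\cap N$, $B = \langle\{\Gamma_w\cap{\rm Stab}(v): w\in\bbY_*\}\rangle$, and $C = \langle\{\Gamma_w: w\in\bbY_*\setminus\Act(v)\}\rangle$ coincide via the chain $C\subseteq B \subseteq A \subseteq C$.

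The first two inclusions I expect to be formal. For $C\subseteq B$: if $w\notin \Act(v)$ then by symmetry in action $v\notin \Act(w)$, so the rotating-family axiom that $\Gamma_w$ acts by rotations around $w$ forces $\Gamma_w$ to fix such a $v$. Hence $\Gamma_w\subseteq {\rm Stab}(v)$, so $\Gamma_w = \Gamma_w\cap {\rm Stab}(v)$ appears among the generators of $B$. For $B\subseteq A$: each generator $\Gamma_w\cap {\rm Stab}(v)$ sits inside both ${\rm Stab}(v)$ and $N$ (recall that $N$ is generated by the $\Gamma_w$, so $\Gamma_w\subseteq N$).

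The substantive inclusion is $A\subseteq C$, which I plan to prove by transfinite induction on the complexity pair $(\alpha(\gamma),n(\gamma))$ under lexicographic order, which is well-founded. The base case $\gamma=1$ is immediate. For the inductive step, fix $\gamma\in {\rm Stab}(v)\cap N\setminus\{1\}$ and apply Corollary \ref{cor:short_less_complex} with $x=v$: this produces $s\in\bbY_*$ and $\gamma_s\in\Gamma_s$ with $(\alpha(\gamma_s\gamma),n(\gamma_s\gamma))<(\alpha(\gamma),n(\gamma))$, and either (i) $v$ is $s$-inactive, or (ii) $v$ and $\gamma v$ are $s$-active with $d_s^\ang(v,\gamma v)>\Thetashortenable$. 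Since $\gamma v=v$, alternative (ii) would assert $d_s^\ang(v,v)>\Thetashortenable$; but the Separation axiom gives $d_s^\pi(v,v)\leq \theta$ whenever $v\in\Act(s)\setminus\{s\}$, and together with $d_s\leq d_s^\pi$ this forces $d_s^\ang(v,v)\leq \theta$, contradicting (ii) once $\Theta_{rot}$ is large enough that $\Thetashortenable>\theta$. So we are always in alternative (i): $s\notin\Act(v)$. Then $\gamma_s\in\Gamma_s\subseteq C$, and because $v\notin \Act(s)$ the rotation $\gamma_s$ fixes $v$, so $\gamma_s\gamma\in {\rm Stab}(v)\cap N$ and has strictly smaller complexity. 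By induction $\gamma_s\gamma\in C$, hence $\gamma = \gamma_s^{-1}(\gamma_s\gamma)\in C$.

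The main subtlety I anticipate is ensuring that alternative (ii) really is excluded, which requires both that $d_s^\ang(v,v)$ is defined (i.e.\ $s\neq v$) and that the bound from Separation dominates $\Thetashortenable$. The first point I expect to extract by tracing through the proof of Corollary \ref{cor:short_less_complex}: the chosen $s$ is drawn from the axis of $\gamma$ in the tree $T_{\alpha(\gamma)}$, on which $\gamma$ is hyperbolic by the first bullet of Proposition \ref{prop:shortening}, so $s$ is moved by $\gamma$ and in particular cannot equal the $\gamma$-fixed vertex $v$; the case $x=v\notin\bbY_{i(\gamma)}$ (handled via the fifth bullet) should unfold analogously but will need the same verification. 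The second point is a mild strengthening of the constant constraint in Standing Assumption \ref{assump:const}, easily absorbed into the "$\Theta_{rot}$ sufficiently large" hypothesis of Theorem \ref{thm:quotient_proj_graph}.
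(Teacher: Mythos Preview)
Your proof is correct and matches the paper's approach: the substantive inclusion $A\subseteq C$ is proved by transfinite induction on $(\alpha(\gamma),n(\gamma))$ via Corollary~\ref{cor:short_less_complex}, with alternative~(ii) ruled out because $\gamma v=v$ forces $d_s^\ang(v,v)$ to be small. Your closing worries are slightly overcautious: once case~(ii) of the corollary asserts $d_s^\ang(v,\gamma v)>\Thetashortenable$, that quantity is in particular defined, so $v\in\Act(s)\setminus\{s\}$ and hence $s\neq v$ is automatic; and Standing Assumption~\ref{assump:const} already yields $\Thetashortenable>\theta$ (indeed $\Thetashortenable>2C$), so no strengthening of the constants is needed. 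The only cosmetic difference is that the paper proves $B=C$ directly---showing $\Gamma_w\cap\mathrm{Stab}(v)=\{1\}$ for $w\in\Act(v)$ via Corollary~\ref{cor:large_multicolor_rotations}---rather than deducing it from your chain $C\subseteq B\subseteq A\subseteq C$.
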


\begin{proof}
To see the second equality, observe that if $w\notin \Act(v)$ then $\Gamma_w\subset {\rm Stab}(v)$.  If 
$\Gamma_w\cap {\rm Stab}(v)$ is non-trivial, then since $\G_w$ consists only of large rotations,  $w$ must be $v$-inactive 
(in other words $w\notin \Act(v)$). In fact, if $w$ is $v$-active, then, for any nontrivial $\gamma_w\in \Gamma_w$, 
$d_w(v,\gamma_w v)$ is defined, and hence nonzero by Corollary \ref{cor:large_multicolor_rotations}.  Thus $\gamma_w v\neq 
v$.
	
The right-hand-side is contained in  the left-hand-side. Let us prove the other inclusion.  Take $\gamma \in {\rm 
Stab}(v)\cap N$. We want to show that  $\gamma \in   \langle \{\Gamma_w: w\in\bbY_* \setminus 
\Act(v)\} \rangle $.  We proceed by induction on $(\alpha(\gamma), n(\gamma))$. 

If  $\alpha(\gamma) =0$, then $\gamma$ is 
trivial. If $\alpha(\gamma)>0$, let $(s,\gamma_s)$ be as in Corollary \ref{cor:short_less_complex}.  If $s$ is $v$-inactive, 
then $\gamma_s\gamma v= v$, and the induction hypothesis applies to $\gamma_s\gamma$. In particular, $\gamma$ is also in 
$\langle \{\Gamma_w: w\in\bbY_* \setminus \Act(v)\} \rangle$.
	
On the other hand, we cannot have $d_s^\ang(v, \gamma 
v)>\Thetashortenable $,  since this contradicts $v=\gamma v$.
\end{proof}

\section{Applications to mapping class groups}\label{sec:MCG}
Let $\Sigma_{g,p}$ denote the genus--$g$ oriented surface with $p$ punctures, and let $\mathcal C(\Sigma_{g,p})$ denote its 
curve graph.

\begin{prop}\label{prop:mcg_cpg}
 There exists $K$ such that the following holds: $\mathcal C(\Sigma_{g,p})$ is an $MCG(\Sigma_{g,p})$-composite projection 
graph.  For any $k\in\mathbb Z\backslash\{0\}$, there is a composite rotating family on the above composite projection 
graph such that, for each curve $y$, we have $\Gamma_y=\langle T_y^{kK}\rangle$, where $T_y$ is the Dehn twist about $y$.
\end{prop}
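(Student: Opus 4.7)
The plan is to verify the axioms of a composite projection graph for $(\mathcal{C}(\Sigma_{g,p}),\bbY_*)$ under the action of $MCG(\Sigma_{g,p})$, and then check the composite rotating family axioms for $\Gamma_y=\langle T_y^{kK}\rangle$. The set-up is essentially the one built in \cite{D_PlateSpinning}: take $\bbY_*$ to be the set of isotopy classes of essential simple closed curves; partition it into the finitely many $MCG$-orbits (topological types) $\bbY_1,\dots,\bbY_m$; declare $x\in\Act(y)$ if $x$ and $y$ intersect; and take $d^\pi_y$ to come from Masur--Minsky subsurface projection to the annular curve complex $\mathcal{C}(y)$.

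For the composite projection system axioms, symmetry is immediate, the triangle inequality holds because $d^\pi_y$ is defined as a diameter, the Behrstock inequality is the classical one for annular projections, properness is the Masur--Minsky finiteness theorem, separation is the bound on $\mathrm{diam}(\pi_y(z))$, closeness in inaction is the standard fact that disjoint curves project near each other, and finite filling follows because a pants decomposition together with one additional curve of each topological type already fills any family. Hyperbolicity of $\mathcal{C}(\Sigma_{g,p})$ and the simplicial $MCG$-action are classical. The remaining axiom of a composite projection graph, BGIT, is exactly the Bounded Geodesic Image Theorem of Masur--Minsky: for a suitable $C$, whenever $d_s(x,y)>C$, any geodesic $[x,y]$ in $\mathcal{C}(\Sigma_{g,p})$ contains a curve disjoint from $s$, i.e.\ at distance $1$ from $s$ in the curve graph.

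For the composite rotating family I set $\bbY_*^\tau=\bbY_*$ and $\Gamma_y=\langle T_y^{kK}\rangle$. Everything except proper isotropy and the large rotation condition is standard: $\Gamma_y$ is infinite cyclic and is contained in $\mathrm{Stab}(y)$; it commutes with $\Gamma_z$ when $y,z$ are disjoint (Dehn twists on disjoint curves commute); it fixes every curve disjoint from $y$, hence every vertex at distance $1$ from $y$ in $\mathcal{C}(\Sigma_{g,p})$; and $\Gamma_{gy}=g\Gamma_y g^{-1}$ is immediate from $T_{gy}=g T_y g^{-1}$. Both proper isotropy and the large rotation axiom reduce to the classical estimate
\[
\bigl|\,d_y(w,T_y^n z)-|n|\,\bigr|\leq C_0
\]
in the annular curve complex, valid for a uniform $C_0$ whenever $w,z\in\Act(y)$. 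Choosing $K$ large enough that $K-C_0-\Theta\geq\Theta_{rot}$, any non-trivial $g=T_y^{nkK}$ with $k\neq 0$ shifts any curve in $\Act(y)$ by at least $\Theta_{rot}$ in the annular distance, which gives both the rotation inequality and proper isotropy simultaneously.

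The main obstacle is the passage from the \emph{approximate} Behrstock and triangle inequalities naturally satisfied by raw Masur--Minsky projections to the \emph{exact} axioms of Definition \ref{def;CPS}. This is the reason one needs the projection complex machinery of \cite{BBF,BBFS}, and it is precisely the point at which one invokes the work already carried out in \cite{D_PlateSpinning}. Once that modification is in place, the composite projection graph structure and the composite rotating family property for sufficiently large powers of Dehn twists follow from the standard Masur--Minsky theory outlined above.
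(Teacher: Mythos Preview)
Your overall approach mirrors the paper's: both sketch the verification of the composite projection graph and composite rotating family axioms by pointing to \cite{D_PlateSpinning} together with the standard Masur--Minsky/Behrstock package (hyperbolicity of the curve graph, subsurface projections, the Behrstock inequality, BGIT, and the annular twist estimate). The paper's proof is essentially a citation, and your proposal fills in somewhat more of the verification, which is fine.

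There is, however, a genuine error in your choice of partition. You take the $\bbY_i$ to be the $MCG(\Sigma_{g,p})$--orbits (topological types) of curves. This violates the axiom $\bbY_{i(y)}\subset\Act(y)$ in Definition~\ref{def;CPS}: two curves of the same topological type can be disjoint (e.g.\ two non-separating curves in a pants decomposition), and hence not in each other's active sets. The whole point of the colouring is that any two distinct curves of the \emph{same} colour must intersect, so that $d^\pi_y$ is defined on all of $\bbY_{i(y)}\setminus\{y\}$ and the single-colour projection axioms of \cite{BBF} apply. The paper (following \cite{BBF} and \cite{D_PlateSpinning}) uses a much finer colouring: one passes to a finite-index normal subgroup $G_0\leq MCG(\Sigma_{g,p})$ chosen so that distinct curves in the same $G_0$--orbit always intersect, and the colours are then indexed by the finitely many $G_0$--cosets (with $MCG$ permuting the colours rather than fixing each one). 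Without this, you do not get a composite projection system at all, so the subsequent verification of the rotating-family axioms, while correct in spirit, is built on a structure that does not exist. Replacing your partition by the BBF colouring fixes the argument.
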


\begin{proof}
This follows the discussion in~\cite[Section 3]{D_PlateSpinning} exactly, so we just describe the data here and refer 
the reader to~\cite{D_PlateSpinning} for the explanation of why this data determines a composite projection 
graph and rotating family.

\textbf{Composite projection graph:}  $\mathcal C(\Sigma_{g,p})$ is hyperbolic~\cite{MM_I}, and $MCG(\Sigma_{g,p})$ acts 
by simplicial automorphisms.  For each (isotopy class of) curve $y$, let $\Act(y)$ be the set of curves $x$ that intersect
$y$, i.e. the set of vertices of $\mathcal C(\Sigma_{g,p})$ distinct from, and not adjacent to, $y$.  Note that 
$x\in\Act(y)$ if and only if $y\in\Act(x)$.  The $MCG(\Sigma_{g,p})$--invariant colouring $\mathcal 
C(\Sigma_{g,p})=\bigsqcup_{i=0}^m\mathbb Y_i$ is described in~\cite{D_PlateSpinning} and is derived from the 
colouring in~\cite{BBF}.  (There is a finite-index normal subgroup $G_0\leq MCG(\Sigma_{g,p})$ that preserves each colour, 
and the colours correspond to the cosets of $G_0$, so that $MCG(\Sigma_{g,p})$ acts on the set of colours.)  Given a curve 
$y$, and curves $x,z$ intersecting $y$, the distance $d^\pi_y(x,z)$ is defined via subsurface projection, and satisfies the 
properties from Definition~\ref{def;CPS} by results from \cite{MM_II,Beh:thesis}, see the discussion 
in~\cite{D_PlateSpinning}. 

\textbf{Composite rotating family:}  The discussion in~\cite{D_PlateSpinning} provides an integer $K>0$ such that the 
following holds for all $k\in\mathbb Z\backslash\{0\}$.  For each curve $y$, let $T_y$ be the Dehn twist about $y$.  Let 
$\Gamma_y=\langle T_y^{kK}\rangle$.  Then the subgroups $\Gamma_y,\ y\in\mathcal C(\Sigma_{g,p})^{(0)}$, form a composite 
rotating family on the $MCG(\Sigma_{g,p})$--composite projection system discussed above.
\end{proof}

\begin{thm}\label{thm:mcg_acyl}
 Let $\Sigma_{g,p}$ be a finite-type surface, with either $g>0$ or $p>3$. Then there exists 
 a positive integer $K_0$ so that for all non-zero multiples $K$ of $K_0$, the group $MCG(\Sigma_{g,p})/DT_K$ is 
acylindrically hyperbolic, where $DT_K$ is the normal subgroup generated by all $K^{th}$ powers of Dehn twists. 
\end{thm}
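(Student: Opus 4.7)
The plan is to apply Theorem \ref{thm:quotient_proj_graph} to $G = MCG(\Sigma_{g,p})$ acting on $X = \mathcal{C}(\Sigma_{g,p})$, equipped with the composite projection graph structure and the family of rotation subgroups $\Gamma_y = \langle T_y^{kK}\rangle$ provided by Proposition \ref{prop:mcg_cpg}; here $K$ is the fixed integer from that proposition and $k \in \mathbb{Z}\setminus\{0\}$ is an auxiliary parameter to be chosen. Setting $N = \langle\{\Gamma_y\}\rangle = DT_{kK}$ and $K_0 = kK$, Theorem \ref{thm:quotient_proj_graph} will yield hyperbolicity of $X/N$ together with a non-elementary action of $MCG(\Sigma_{g,p})/N$ on $X/N$ possessing a loxodromic WPD element. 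Osin's characterisation of acylindrical hyperbolicity as equivalent to admitting a non-elementary action on a hyperbolic space with a WPD element then finishes the argument. For a nonzero multiple $K = mK_0$ the same reasoning applies with $\Gamma_y = \langle T_y^{K}\rangle$, which is again a composite rotating family by Proposition \ref{prop:mcg_cpg} (now applied with $mk$ in place of $k$) and with even larger rotating control.

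The principal quantitative point is that the rotating control constant $\Theta_{rot}$ associated to $\{\langle T_y^{kK}\rangle\}$ can be made arbitrarily large by inflating $k$. Indeed, for curves $x, z$ intersecting $y$, the classical estimate on subsurface projections under iterates of a Dehn twist gives $d_y^\pi(z, T_y^n z) \geq |n| - O(1)$. Hence whenever $d_y(x,z) \leq \Theta$ and $g = T_y^{mkK}$ with $m \neq 0$, combining the triangle inequality for $d_y^\pi$ with the comparability $|d_y - d_y^\pi| \leq \kappa$ yields $d_y(x, gz) \geq |m| k K - O(1)$. Thus we may fix $k$ large enough that the resulting $\Theta_{rot}$ exceeds the threshold demanded by Theorem \ref{thm:quotient_proj_graph}, and define $K_0 := kK$.

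It remains to verify that the action $G \curvearrowright X$ satisfies the upstairs hypotheses needed to propagate loxodromicity, the WPD property, and non-elementarity through Theorem \ref{thm:quotient_proj_graph}. When $g > 0$ or $p > 3$, the curve graph $\mathcal{C}(\Sigma_{g,p})$ is an unbounded hyperbolic graph on which pseudo-Anosov mapping classes act loxodromically by Masur--Minsky and satisfy the WPD property by Bowditch, and independent pairs of pseudo-Anosovs abound. Theorem \ref{thm:quotient_proj_graph} therefore produces independent loxodromic elements of $MCG(\Sigma_{g,p})/DT_{K_0}$ acting on $X/N$, one of which is WPD, so the action is non-elementary with a WPD element and $MCG(\Sigma_{g,p})/DT_{K_0}$ is acylindrically hyperbolic. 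The heavy lifting is performed by Theorem \ref{thm:quotient_proj_graph}; the only ingredient specific to mapping class groups is the rotating-control estimate above, which is the standard linear growth of subsurface projection under Dehn twists and is therefore the main, but modest, point to check.
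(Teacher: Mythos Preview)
Your proof is correct and follows the same route as the paper: invoke Theorem~\ref{thm:quotient_proj_graph} for the composite projection graph structure on $\mathcal C(\Sigma_{g,p})$ furnished by Proposition~\ref{prop:mcg_cpg}, and then appeal to Osin's WPD criterion for acylindrical hyperbolicity. The paper's own proof is a two-line citation of these results; you have simply unpacked why the rotating control can be taken large (linear growth of annular projections under Dehn twisting) and why the upstairs action has the requisite pseudo-Anosov WPD and non-elementarity properties, which the paper leaves implicit.
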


\begin{proof}
  By Theorem \ref{thm:quotient_proj_graph}, the group $MCG(\Sigma_{g,p})/DT_K$ 
acts non-elementarily on $\mathcal C(\Sigma_{g,p})/DT_K$ with loxodromic WPD 
elements.  Hence $MCG(\Sigma_{g,p})/DT_K$ is acylindrically hyperbolic, by~\cite[Theorem 1.2]{Osin:acyl}.
 \end{proof}
 
 \begin{rem}
  A similar theorem also holds for quotients of mapping class groups by powers of Dehn twists around curves of one specified 
topological type (we allow $\bbY^\tau_*$ to be a proper subset of $\bbY_*$ in the definition of a composite rotating family 
on a composite projection graph).
 \end{rem}

\subsection{Relative hyperbolicity and relative quasiconvexity}
We will see that, in low-complexity, quotients of mapping class groups by powers of Dehn twists are in fact hyperbolic 
groups. To prove this, we will use relative hyperbolicity.

We will use the following definition of relative hyperbolicity (following Proposition~4.28 of~\cite{DGO}):

\begin{defi}\label{defi:relhyp}
Let $G$ be a group and let $\mathcal H$ be a collection of subgroups of $G$.   Let $\mathcal 
S\subset G$ be a finite set that is closed under taking inverses.  Suppose that $\mathcal S\sqcup\bigsqcup_{H\in\mathcal H}$ 
generates $G$, and let $Cay(G,\mathcal S\sqcup\bigsqcup_{H\in\mathcal H})$ be the Cayley graph with respect to this 
generating set.

(Note that, if $s\in \mathcal S$ is also contained in some $H\in\mathcal H$, and $g\in G$, we regard $g$ and $gs$ as being 
joined by two edges labelled by $s$, one of which is in a the graph $g\Gamma_H$ defined below.)

For each $H\in\mathcal H$, let $\Gamma_H$ be the Cayley graph of $H$ with respect to the generating set $H$, so that 
$\Gamma_H$ is a diameter--$1$ subgraph of $Cay(G,\mathcal S\sqcup\bigsqcup_{H\in\mathcal H})$ for each $H\in\mathcal H$.  
Define a metric $d_H$ on $H$ as follows: given vertices $x,y\in H$, a combinatorial path $\sigma$ in $\Cay(G,\mathcal 
S\sqcup\bigsqcup_{H\in\mathcal H})$ is \emph{admissible} if $\sigma$ does not traverse an edge of $\Gamma_H$.  Then 
$d_H(x,y)$ is the infimum of the lengths of admissible paths from $x$ to $y$. We say that $G$ is \emph{hyperbolic relative 
to $\mathcal H$} if $Cay(G,\mathcal S\sqcup\bigsqcup_{H\in\mathcal H})$ is a 
hyperbolic graph and the metric $d_H$ is proper for each $H\in\mathcal H$.
\end{defi}

We also require the notion of (strong) relative quasiconvexity.  In fact, we will take as the definition the 
characterisation provided by Theorem~4.13 of~\cite{Osin_relhyp}:

\begin{defi}\label{defi:relquas}
Let $G$ be hyperbolic relative to a collection $\mathcal H$ of subgroups.  The subgroup $Q\leq G$ is 
\emph{strongly relatively quasiconvex} if:
\begin{itemize}
     \item $Q$ is generated by a finite set $Y$;
     \item letting $d_Y$ be the word-metric on $Q$ with respect to $Y$, the inclusion $(Q,d_Y)\to Cay(G,\mathcal 
S\sqcup\bigsqcup_{H\in\mathcal H})$ is a quasi-isometric embedding.
\end{itemize}
\end{defi}

We will also use the following well-known fact (see~\cite[Theorem 5.1]{CharneyCrisp}):
 
 \begin{lem}\label{lem:rel_MS}
 Let $G$ act cocompactly on the graph $X$. Let $\{v_i\}$ be a (necessarily finite) collection of representatives of the 
$G$-orbits of the vertices of $X$.  Then $G$ is finitely generated relative 
to $\{Stab(v_i)\}$, and any orbit map defines a quasi-isometry between $Cay(G,\mathcal S\sqcup\bigsqcup_{v_i}Stab(v_i))$ and 
$X$, for any finite relative generating set $\mathcal S$.
\end{lem}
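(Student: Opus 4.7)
The plan is to run a standard Schwarz--Milnor style argument, adapted to the cocompact (but possibly non-proper) setting. Since $G$ acts cocompactly on $X$, there are finitely many $G$-orbits of edges; for each such orbit I would pick a representative $e_\ell$ with endpoints $v_{i(\ell)}$ and $a_\ell v_{i'(\ell)}$, where $a_\ell \in G$ and $v_{i(\ell)}, v_{i'(\ell)}$ lie in our chosen set of vertex orbit representatives. Let $\mathcal S$ consist of the $a_\ell$'s together with their inverses.

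To show $G = \langle \mathcal S \cup \bigsqcup_i {\rm Stab}(v_i) \rangle$, given $g\in G$, take any combinatorial path in $X$ from $v_{i_0}$ to $g v_{i_0}$ (which exists by connectedness of $X$), traverse it edge by edge, recognise each traversed edge as a $G$-translate of some $e_\ell$, and read off an expression for $g$ as a product of stabiliser elements and $a_\ell$'s. A standard generating-set-change argument then shows that any two finite relative generating sets give bilipschitz-equivalent Cayley graphs, so it suffices to verify the quasi-isometry claim for the specific $\mathcal S$ above.

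For the quasi-isometry, fix a basepoint $v\in X$ and consider the orbit map $\phi:G\to X$, $g\mapsto gv$. For the upper Lipschitz bound, an $\mathcal S$-edge $(g,gs)$ in the Cayley graph satisfies $d_X(gv,gsv)=d_X(v,sv)\leq \max_{s'\in\mathcal S}d_X(v,s'v)$, while a stabiliser-edge $(g,gh)$ with $h\in {\rm Stab}(v_j)$ satisfies the key estimate
\[ d_X(v,hv) \leq d_X(v,v_j) + d_X(v_j, hv) = d_X(v,v_j) + d_X(hv_j, hv) = 2\, d_X(v,v_j), \]
which is bounded uniformly in $j$ since there are only finitely many $v_j$. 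Coarse density of $\phi(G)=Gv$ is immediate from cocompactness, as every vertex of $X$ is of the form $gv_i$ for some $g,i$ and $d_X(gv_i, gv) = d_X(v_i, v)$ is uniformly bounded. For the lower Lipschitz bound, given $g,g'\in G$ and a geodesic in $X$ from $gv$ to $g'v$, the reverse of the finite generation argument converts each edge of the geodesic into a word of bounded length in the generators (one $a_\ell$ sandwiched between two stabiliser elements), giving a linear bound $d_{\rm Cay}(g,g')\leq C\, d_X(gv,g'v)$.

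The main technical point is the stabiliser-edge Lipschitz estimate above: one might initially worry that it fails because ${\rm Stab}(v_j)$ can act with unbounded orbits on $X$, but the saving observation is that the specific orbit ${\rm Stab}(v_j)\cdot v$ is confined to the sphere of radius $d_X(v,v_j)$ around $v_j$, and therefore has $X$-diameter at most $2 d_X(v,v_j)$ no matter how large ${\rm Stab}(v_j)$ is.
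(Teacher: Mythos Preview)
Your argument is correct and follows the standard Schwarz--Milnor template adapted to the non-proper setting. The paper itself does not give a proof: it records the lemma as a well-known fact and refers the reader to \cite[Theorem 5.1]{CharneyCrisp}. So your proposal is not so much a different route as a self-contained replacement for a citation.

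Two very minor points you may want to tighten. First, your lower Lipschitz bound as stated, $d_{\rm Cay}(g,g')\leq C\,d_X(gv,g'v)$, needs an additive constant: when $gv=g'v$ but $g\neq g'$ you only get $d_{\rm Cay}(g,g')$ bounded (by $1$ if $v$ is one of the $v_i$, since then $g^{-1}g'\in{\rm Stab}(v)$; by a fixed constant otherwise). This is of course harmless for a quasi-isometry. Second, the edge-reading argument is cleanest if you take the basepoint $v$ to be one of the chosen representatives $v_i$; the extension to an arbitrary basepoint then follows because any two orbit maps differ by a bounded amount. With those cosmetic adjustments, your write-up is complete and could serve as a proof in place of the reference.
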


\subsection{Vertex links in the curve graph}\label{sec:links}

Let $(g,p)\in \{(0,5),(1,2)\}$, and let $\gamma\in\mathcal C(\Sigma_{g,p})^{(0)}$. The link $lk(\gamma)$ of $\gamma$ in 
$\mathcal C(\Sigma_{g,p})$ is a 
discrete set that can be identified with the vertex set of the curve graph of the only component of $\Sigma_{g,p}-\gamma$ 
which is not homeomorphic to $\Sigma_{0,3}$. We equip $lk(\gamma)$ with the metric induced by this 
identification of $lk(\gamma)$ with a subset of the curve graph of the aforementioned complementary component. Denote by 
$L_\gamma$ the metric space with underlying set $lk(\gamma)$, endowed with the metric just described.

For a vertex $x\in\mathcal C(\Sigma_{g,p})$, with $x\neq \gamma$, we denote by $r(x)$ the entrance point in $lk(\gamma)$ of 
any geodesic from $x$ to $\gamma$.

The following is an easy consequence of the Bounded Geodesic Image Theorem, \cite[Theorem 3.1]{MM_II}.

\begin{lemma}\label{lem:bgi_complexity_1}
There exists $C$ so that the following holds. Let $x,y,s$ be curves.  If some geodesic $[x,y]$ in 
$\mathcal C(\Sigma_{g,p})$ from $x$ to $y$ does not contain $s$, then $d_{L_s}(r(x),r(y))\leq C.$
\end{lemma}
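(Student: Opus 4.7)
The plan is to reduce the statement to the standard Bounded Geodesic Image Theorem (BGIT) of Masur--Minsky \cite{MM_II}, applied to the subsurface $Y \subset \Sigma_{g,p}$ obtained as the complementary component of $s$ that is not a pair of pants. Since $\xi(g,p)=2$, the surface $Y$ has complexity $1$: it is $\Sigma_{0,4}$ when $(g,p)=(0,5)$, and $\Sigma_{1,1}$ when $(g,p)=(1,2)$. By construction, $L_s$ is precisely the vertex set of $\mathcal{C}(Y)$ with the induced curve-graph metric.

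First I would relate the entrance point $r(x)$ to the Masur--Minsky subsurface projection $\pi_Y(x)$. For any curve $x \neq s$, the entrance point $r(x)$ is a vertex of $lk(s)$ adjacent (in $\mathcal{C}(\Sigma_{g,p})$) to $x$ along some geodesic to $s$, hence is a curve disjoint from $s$ and intersecting $x$ (unless $x \in lk(s)$, in which case $r(x)=x$). It is then standard that $r(x)$ and $\pi_Y(x)$ are within a uniformly bounded distance $B$ in $\mathcal{C}(Y)$: both are obtained by surgering $x$ along $\partial Y = s$ and taking a boundary component of a regular neighborhood of the resulting arc system in $Y$.

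Next I would apply BGIT. The crucial observation is that $\Sigma_{g,p} \setminus Y$ consists of $s$ together with a $\Sigma_{0,3}$ component, which contains no essential simple closed curve. Therefore the only vertex of $\mathcal{C}(\Sigma_{g,p})$ disjoint from $Y$ (equivalently, the only curve for which $\pi_Y$ is undefined) is $s$ itself. Under the hypothesis that some geodesic $[x,y]$ avoids $s$, every vertex of $[x,y]$ admits a subsurface projection to $Y$, and BGIT supplies a constant $C_0$ with $d_{\mathcal{C}(Y)}(\pi_Y(x),\pi_Y(y)) \leq C_0$.

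Combining these two steps via the triangle inequality yields
\[
d_{L_s}(r(x),r(y)) \;\leq\; d_{\mathcal{C}(Y)}(r(x),\pi_Y(x)) + d_{\mathcal{C}(Y)}(\pi_Y(x),\pi_Y(y)) + d_{\mathcal{C}(Y)}(\pi_Y(y),r(y)) \;\leq\; C_0 + 2B,
\]
which gives the desired constant $C$. The only point requiring any care is the identification of $L_s$ with $\mathcal{C}(Y)$ and the bounded comparison between $r(\cdot)$ and $\pi_Y(\cdot)$; both are routine in complexity one, so no real obstacle arises and the proof is essentially an invocation of BGIT.
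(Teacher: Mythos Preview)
Your overall strategy---reduce to the Masur--Minsky BGIT for the complexity-one complementary subsurface $Y$---is exactly what the paper intends; indeed the paper gives no proof beyond citing BGIT. However, your justification of the comparison $d_{\mathcal C(Y)}(r(x),\pi_Y(x))\leq B$ is faulty. You assert that $r(x)$ is adjacent to $x$ in $\mathcal C(\Sigma_{g,p})$ and is ``obtained by surgering $x$ along $\partial Y=s$''. Neither is true: $r(x)$ is by definition the entrance point of a geodesic $[x,s]$ into $lk(s)$, i.e.\ the \emph{penultimate} vertex of that geodesic (the one adjacent to $s$, not to $x$). When $d_{\mathcal C(\Sigma_{g,p})}(x,s)$ is large, $r(x)$ is far from $x$ and bears no direct surgery relationship to it. (Your phrasing also conflates adjacency in the curve graph with intersection of curves; these are opposite.)

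The fix is immediate and uses BGIT a second time. The sub-geodesic of $[x,s]$ from $x$ to $r(x)$ is a geodesic that avoids $s$, so BGIT gives $d_{\mathcal C(Y)}(\pi_Y(x),\pi_Y(r(x)))\leq C_0$; and since $r(x)$ is itself a curve in $Y$, one has $\pi_Y(r(x))$ coarsely equal to $r(x)$. This yields the desired bound on $d_{\mathcal C(Y)}(r(x),\pi_Y(x))$, and then your triangle-inequality argument goes through. (A minor side remark: when $s$ is non-separating on $\Sigma_{1,2}$ there is no $\Sigma_{0,3}$ component, but your conclusion that $s$ is the unique curve missing $Y$ remains correct.)
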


Roughly speaking, the lemma says that there is a Lipschitz retraction of the complement of $\gamma$ onto $lk(\gamma)$. We 
will use such retraction to prove that the relative metric is finite by, again roughly speaking, starting with a path in the 
complement of $\gamma$ and constructing a path in $lk(\gamma)$ which is not much longer.

 \subsection{Hyperbolic and relatively hyperbolic quotients}\label{subsec:rel_hyp_quotient}
 Fix $(g,p)\in\{(0,4),(0,5),(1,0),(1,1),(1,2)\}$.  Let $K\in\mathbb Z\setminus\{0\}$, and let $DT_K$ be 
the normal subgroup generated by all $K^{th}$ powers of Dehn twists. Let $G(g,p,K)=MCG(\Sigma_{g,p})/DT_K$, and let 
$X_K=\mathcal C(\Sigma_{g,p})/DT_K$.

\begin{thm}\label{thm:mcg_quotients}
Let $(g,p)$ be as above.  Then there exists a positive integer $K_0$ so that for all sufficiently large multiples $K$ of 
$K_0$, the following hold. 
 \begin{enumerate}
  \item Suppose $(g,p)\in \{(0,4),(1,0),(1,1)\}$. Then $G(g,p,K)$ is non-elementary hyperbolic.\label{item:small}

  \item Suppose $(g,p)=(0,5)$ (resp. $(g,p)=(1,2)$). Then $G(g,p,K)$ is hyperbolic relative to an infinite index subgroup 
$H$ 
virtually isomorphic to $G(0,4,K)$ (resp. relative to two infinite index subgroups $H_1,H_2$, one virtually isomorphic to 
$G(0,4,K)$ and one virtually isomorphic to $G(1,1,K)$).  In particular, $G(g,p,K)$ is non-elementary 
hyperbolic.\label{item:medium}
 \end{enumerate}
  Moreover, letting $\mathcal H$ be the set of peripheral subgroups arising in the second case, we have that for each 
relative generating set $\mathcal S$, there is a $G_K$--equivariant 
quasi-isometry $Cay(G_K,\mathcal S\cup\bigcup_{H\in\mathcal H}H)\to X_K$.
\end{thm}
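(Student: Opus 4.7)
The plan is to apply Theorem~\ref{thm:quotient_proj_graph} to the $MCG(\Sigma_{g,p})$--composite projection graph $\mathcal C(\Sigma_{g,p})$ provided by Proposition~\ref{prop:mcg_cpg}, and then to bootstrap information on $X_K = \mathcal C(\Sigma_{g,p})/DT_K$ into (relative) hyperbolicity of $G_K = G(g,p,K)$, the main tool being Proposition~\ref{prop:stab} for the analysis of vertex stabilisers. For $K_0$ chosen so that every non-zero multiple $K$ gives a rotating family whose constant $\Theta_{rot}$ satisfies the hypothesis of Theorem~\ref{thm:quotient_proj_graph}, that theorem immediately yields hyperbolicity of $X_K$ and a cocompact, non-elementary action of $G_K$ with loxodromic WPD elements. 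The ``Moreover'' statement on the quasi-isometry is then immediate from Lemma~\ref{lem:rel_MS}.

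For item~(\ref{item:small}), when $\xi(g,p)\in\{0,1\}$ any two distinct essential curves intersect, so $\Act(v)=\mathcal C(\Sigma_{g,p})^{(0)}\setminus\{v\}$ for every vertex $v$. Proposition~\ref{prop:stab} then gives $Stab_{MCG}(v)\cap DT_K = \langle T_v^K\rangle$, and since the nontrivial complementary subsurfaces are pants or annuli (with trivial pure MCG), $Stab_{MCG}(v)$ is itself virtually $\langle T_v\rangle$. Hence vertex stabilisers of the $G_K$--action on $X_K$ are finite; a cocompact action with finite vertex stabilisers on a hyperbolic graph yields a hyperbolic group by Milnor--Schwarz, with non-elementarity inherited from the action on $X_K$.

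For item~(\ref{item:medium}), $\xi(g,p)=2$ and cutting along any curve $v$ leaves a unique nontrivial component $\Sigma_{g',p'}$ of complexity $1$. A further application of Proposition~\ref{prop:stab} shows that the image of $Stab_{MCG}(v)$ in $G_K$ is virtually $G(g',p',K)\times\mathbb Z/K$, which by item~(\ref{item:small}) is virtually hyperbolic; for $(0,5)$ there is one curve orbit, yielding one peripheral $H$ virtually $G(0,4,K)$, whereas for $(1,2)$ the separating and non-separating orbits give peripherals virtually $G(1,1,K)$ and $G(0,4,K)$. To upgrade the quasi-isometry of Lemma~\ref{lem:rel_MS} to the full definition of relative hyperbolicity (Definition~\ref{defi:relhyp}), it remains to establish properness of the metric $d_H$ on each peripheral. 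The plan is to convert an admissible path from $1$ to $h\in H$ into a loop at $\bar v$ in $X_K$ that stays away from $\bar v$, and then to use Lemma~\ref{lem:bgi_complexity_1} with the link retraction $r$ to obtain a uniformly short path in the link $L_{\bar v}$. This link is essentially the complexity-$1$ curve graph of $\Sigma_{g',p'}$ quotiented by its own $DT_K$, on which $H$ acts through its virtually-$G(g',p',K)$ quotient; by item~(\ref{item:small}) this action is cocompact with finite stabilisers, so only finitely many $h$ admit an admissible path of bounded length. Once relative hyperbolicity is in hand, hyperbolicity of $G_K$ follows since the peripherals themselves are hyperbolic (Bowditch/Osin).

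The main obstacle will be verifying properness of $d_H$. The delicate points are: identifying $L_{\bar v}$ with a quotient of the curve graph of $\Sigma_{g',p'}$ compatibly with $DT_K$; checking that admissible paths in the coned-off Cayley graph correspond, via the orbit map, to paths in $X_K$ avoiding $\bar v$ up to bounded error; and bounding the image under $r$ using Lemma~\ref{lem:bgi_complexity_1}. All three rely on the BGIT-type retraction developed in Section~\ref{sec:links} together with the compatibility of $DT_K$ with restriction to the subsurface $\Sigma_{g',p'}$.
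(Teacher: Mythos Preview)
Your proposal is correct and follows essentially the same route as the paper: hyperbolicity and non-elementarity of $X_K$ from Theorem~\ref{thm:quotient_proj_graph}, stabiliser analysis via Proposition~\ref{prop:stab}, the quasi-isometry from Lemma~\ref{lem:rel_MS}, properness of the relative metric via the link retraction and Lemma~\ref{lem:bgi_complexity_1}, and the final appeal to Osin. The one step the paper makes more explicit is the transfer of Lemma~\ref{lem:bgi_complexity_1} to $X_K$: it proves an auxiliary lemma by using Proposition~\ref{prop:lift} to lift a geodesic triangle (two rays into $\bar\gamma$ plus the connecting edge) back to $\mathcal C(\Sigma_{g,p})$---precisely the delicate point you flag at the end.
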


\begin{proof}
The curve graph $\mathcal C(\Sigma_{g,p})$ is an 
$MCG(\Sigma_{g,p})$-composite projection graph, and large powers 
of Dehn twists define a composite 
rotating family on it.  Thus, there exists $K_0$ so that for sufficiently large $K\in 
K_0\mathbb Z$, the graph $X_K$ is 
hyperbolic by Theorem \ref{thm:quotient_proj_graph}.  Moreover, the action of $G(g,p,K)$ on $X_K$ is 
non-elementary, since the action of $MCG(\Sigma_{g,p})$ on $\mathcal C(\Sigma_{g,p})$ is non-elementary. In particular, 
$G(g,p,K)$ contains elements acting on $X_K$ loxodromically.

 \textbf{Hyperbolicity in lowest-complexity cases:}  Suppose that $(g,p)\in 
\{(0,4),(1,0),(1,1)\}$. Then the action of $G(g,p,K)$ on $X_K$ has finite vertex 
stabilisers.  Indeed, each vertex stabiliser is a quotient of a vertex 
stabiliser of $\mathcal C(\Sigma_{g,p})$, which in our 
case is virtually generated by a Dehn twist.  By Lemma~\ref{lem:rel_MS} and cocompactness of the $G(g,p,K)$--action on 
$X_K$, $G(g,p,K)$ is $G(g,p,K)$--equivariantly quasi-isometric to $X_K$ and is thus hyperbolic.  (Since $MCG(\Sigma_{g,p})$ 
is hyperbolic in this case, hyperbolicity of $G(g,p,K)$ can also be deduced from \cite{Del:kill_power}.)
 
 \textbf{Stabilisers:} Suppose from now on that $(g,p)$ is as in the second case. By Proposition \ref{prop:stab}, all 
infinite vertex stabilisers for the action of $G(g,p,K)$ on $X_K$ are of the form specified by 
the statement. Specifically, for $(g,p)=(0,5)$ there is only one topological type of curves (yielding exactly one conjugacy 
class of stabilisers in $X_K$), with stabiliser virtually isomorphic to a central extension of $MCG(\Sigma_{0,4})$ by a 
cyclic subgroup generated by a Dehn twist. Proposition \ref{prop:stab} guarantees that the image of such a stabiliser in 
$G(g,p,K)$ is also obtained by modding out powers of Dehn twists. 

For $(g,p)=(1,2)$ the situation is similar, except that there are two topological types of curves (one 
non-separating, with complement $\Sigma_{0,4}$, and one separating with 
complement $\Sigma_{0,3}\sqcup \Sigma_{1,1}$). 
 
 \textbf{Relative hyperbolicity:}  By Lemma \ref{lem:rel_MS}, $Cay(G(g,p,K),\mathcal S\cup 
\mathcal H)$ is equivariantly quasi-isometric to $X_K$, where $\mathcal S$ is any fixed finite generating set, and 
$\mathcal H$ is a union of conjugacy representatives of stabilisers. Since the action of $G(g,p,K)$ on $X_K$ has a 
loxodromic element, the stabilisers must have infinite index. We now have to prove that the relative metric on each 
stabiliser, $H$, is proper.

Recall from Subsection \ref{sec:links} that, for $\gamma$ a curve on $\Sigma_{g,p}$, we defined a metric space $L_\gamma$ 
with underlying set the link $lk(\gamma)$ of $\gamma$ (regarded as a vertex of $\mathcal C(\Sigma_{g,p})$). Moreover, 
$L_\gamma$ is naturally isometric to the vertex set of the curve graph of the (only non-$\Sigma_{0,3}$ component of the) 
complement of $\gamma$. Similarly, in view of the discussion above about stabilisers, the link of $\bar\gamma$ in $X_{K}$ 
can be made into a metric space $L_{\bar\gamma}$ naturally isometric to the vertex set of the quotient of the curve graph of 
a complexity-1 surface by the action of the subgroup generated by $K$-th powers of 
Dehn twists supported on said surface.

 \begin{lemma}
 Let $\gamma$ be a curve. Let $\bar x,\bar y\neq \bar\gamma$ be adjacent vertices of $X_K$. Let $r(\bar 
x)=r_{\bar\gamma}(x),r(\bar y)=r_{\bar\gamma}(y)$ be the entrance points in $lk(\bar\gamma)$ of geodesics from $\bar x,\bar 
y$ to $\bar \gamma$. Then $d_{L_{\bar\gamma}}(r(\bar x),r(\bar y))\leq C$, where $C$ is as in Lemma 
\ref{lem:bgi_complexity_1}.
 \end{lemma}
 
 \begin{proof}
 For $\bar z$ a vertex of $X_K$, we choose any geodesic $\alpha_{\bar z}$ from $\bar z$ to $\bar\gamma$.
By Proposition \ref{prop:lift}, we can lift the geodesic triangle formed by $\alpha_{\bar x},\alpha_{\bar y}$ and an 
edge connecting $\bar x,\bar y$ to $\mathcal C(\Sigma_{g,p})$, and for convenience we arrange that one of the vertices of the 
lift is $\gamma$. 
 
 By Lemma \ref{lem:bgi_complexity_1}, we see that, in $\mathcal C(\Sigma_{g,p})$, the entrance points of the lifts of 
$\alpha_{\bar x},\alpha_{\bar y}$ in 
$lk(\gamma)$ are close to each other as measured in $L_\gamma$, which implies that the same holds for $r(\bar x),r(\bar y)$ 
in $L_{\bar\gamma}$.
 \end{proof}
 
 We now claim that, for any fixed $x_0\in lk(\bar\gamma)$, there exists $M$ with the following property. Suppose that we can 
write some $h\in H=Stab(\bar\gamma)$ as $h=s_1h_1\dots s_n$, where $s_i\in S,h_i\in H$ and $s_1h_1\dots s_i\notin H$ for all 
$i\leq n-1$. Then $d_{L_{\bar\gamma}}(x_0,hx_0)\leq Mn$. Since $L_{\bar\gamma}$ is equivariantly quasi-isometric to $H$ (see 
the argument for the hyperbolicity of $G(0,4,K)$ and $G(1,1,K)$ above), this concludes the proof of relative hyperbolicity.
 
 We can assume that $S$ is symmetric, and that $S\cap H=\emptyset$. We will use the maps $r_{\bar\gamma}$ (for any $\gamma$) 
from the lemma, 
 where we assume that the relevant geodesics are chosen equivariantly. 
 
Let $L_0=\max_{s\in S} \max_{\bar\gamma'}d_{L_{\bar\gamma'}}(r(x_0),r(s x_0))$. Notice that $L_0$ is indeed attained 
because for any given $s$ (of which there are finitely many), there are only finitely many $\bar\gamma'$ for which 
$d_{L_{\bar\gamma'}}(r(x_0),r(s x_0))$ can exceed $C$ times the distance between $x_0$ and $sx_0$, namely those 
occurring along a geodesic from $x_0$ to $sx_0$.

By equivariance, we have $d_{L_{\bar\gamma}}(r(gx_0),r(gs x_0))\leq L_0$  for any $g\in G(g,p,K)$. Moreover, if $g\notin H$ 
and $h'\in H$, then $gx_0$ and $ghx_0$ are connected by two edges not containing $\bar\gamma$, so that 
$d_{L_{\bar\gamma}}(r(gx_0),r(ghx_0))\leq 2C$. Hence
 $$d_{L_{\bar\gamma}}(x_0,hx_0)= d_{L_{\bar\gamma}}(r(x_0),r(hx_0))\leq L_0n+2(n-1)C,$$
 as required.
 
 Finally, in case~\eqref{item:medium}, hyperbolicity of $G(g,p,K)$ follows since a group hyperbolic relative to hyperbolic 
subgroups is hyperbolic by~\cite[Corollary 2.41]{Osin_relhyp}.
 \end{proof}

\subsection{Residual properties of $MCG(\Sigma_{g,p})$}\label{subsec:residual_properties}
Let $K_0,K$ be as in the statement of Theorem~\ref{thm:mcg_quotients} and let $G(g,p,K)$ and $X_K$ be as above.  Let 
$\Psi_K:MCG(\Sigma_{g,p})\to G(g,p,K)$ be the quotient map.

\begin{lemma}\label{lem:first_quotient_survive}
Let $x\in MCG(\Sigma_{g,p})\setminus\{1\}$.  Then $\Psi_K(x)\ne1$ for all 
sufficiently large $K\in K_0\mathbb Z$.
\end{lemma}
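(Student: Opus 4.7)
The plan is to argue by cases based on the Nielsen--Thurston classification of $x$: $x$ is either (i) periodic (finite order), (ii) pseudo-Anosov (equivalently, loxodromic on $\mathcal C(\Sigma_{g,p})$), or (iii) reducible of infinite order. The key preliminary observation is that $DT_K$ is torsion-free. Indeed, Theorem~\ref{theo;statement_that_should_have_been_in_D_PS} displays $N = DT_K$ as a transfinite iterated amalgamated free product whose vertex groups are either $N_{\alpha-1}$ or of the form $\Gamma_v \times (\text{subgroup of } N_{\alpha-1})$, with each $\Gamma_v = \langle T_v^K\rangle \cong \bbZ$. Since amalgamated free products and direct unions of torsion-free groups remain torsion-free, transfinite induction on $\alpha$ yields that $N$ is torsion-free.

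Cases (i) and (ii) are then immediate. In case (i), since $x$ is torsion and $DT_K$ is torsion-free, $x \notin DT_K$, so $\Psi_K(x) \ne 1$ for every $K \in K_0 \bbZ$. In case (ii), the computation of $L_\gamma$ performed in the proof of Theorem~\ref{thm:quotient_proj_graph} applies to $x$ and gives $L_x := \sup_{n, s} d_s(x_0, x^n x_0) < \infty$, where the supremum is over $n \in \bbZ$ and curves $s$ for which the projection is defined. Once the rotation constant $\Theta_{rot}$ (which grows with $K$) exceeds $10 L_x$, Proposition~\ref{prop:preserve_lox} forces $\Psi_K(x)$ to be loxodromic on $X_K$, hence nontrivial; this holds for all sufficiently large $K \in K_0 \bbZ$.

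The remaining case (iii) is the main obstacle, and is handled by induction on $\xi(g,p)$. Since $x$ is reducible of infinite order, some power $x^n$ fixes a curve $v$. Assume $\Psi_K(x) = 1$; then $x^n \in DT_K \cap \mathrm{Stab}(v)$, which by Proposition~\ref{prop:stab} equals $\langle T_w^K : w \in \{v\} \cup lk(v)\rangle$, where $lk(v)$ is the set of curves disjoint from $v$. Using the standard splitting of (a finite-index subgroup of) $\mathrm{Stab}(v)$ as $\langle T_v\rangle \times \iota(MCG(\Sigma_{g,p} - v))$ via the Birman exact sequence, each $T_w^K$ with $w \in lk(v)$ corresponds to a $K$-th power of a Dehn twist in the complementary subsurface, so this subgroup sits inside $\langle T_v^K\rangle \times \iota(DT_K(\Sigma_{g,p} - v))$. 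In the base case $\xi(g,p) = 1$, the complement has no essential curves and $DT_K(\Sigma_{g,p} - v) = \{1\}$; in the inductive step $\xi(g,p) = 2$, the lemma applied to each (complexity-$1$) component of $\Sigma_{g,p} - v$ forces the image $\bar{x^n}$ of $x^n$ in $MCG(\Sigma_{g,p} - v)$ to be trivial for $K$ large enough in terms of $x$. Either way, a further power $x^N$ of $x$ lies in $\langle T_v\rangle \cap \langle T_v^K\rangle$, so $x^N = T_v^{Kj}$ for some $j$, and this fixed nontrivial power of $T_v$ forces $j = 0$ once $K$ exceeds its $T_v$-exponent, contradicting $x^N \ne 1$. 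The main technical subtlety is identifying the subgroup from Proposition~\ref{prop:stab} with the claimed direct product via the Birman exact sequence, and passing to finite-index subgroups as needed to ensure $x^n$ fixes each component of $\Sigma_{g,p} - v$.
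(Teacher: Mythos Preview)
Your proof is correct and follows essentially the same approach as the paper's: both split into the three Nielsen--Thurston cases, handle the periodic case via torsion-freeness of $DT_K$ (deduced by transfinite induction from Theorem~\ref{theo;statement_that_should_have_been_in_D_PS}), handle the pseudo-Anosov case via Proposition~\ref{prop:preserve_lox}, and handle the reducible case by passing to a power fixing a curve, invoking Proposition~\ref{prop:stab} to identify $DT_K\cap\mathrm{Stab}(v)$, and then inducting on complexity. The only organisational difference is that the paper constructs an explicit homomorphism $\bar\phi:\Psi_K(H')\to\bar A$ and checks well-definedness (which is exactly your containment $DT_K\cap\mathrm{Stab}(v)\subseteq\langle T_v^K\rangle\times\iota(DT_K(\Sigma-v))$ in disguise), whereas you argue by contradiction; the content is the same.
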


\begin{proof}
We consider three cases.
\begin{itemize}
 \item If $x$ is pseudo-Anosov, then it acts loxodromically on $\mathcal 
C(\Sigma_{g,p})$, so by Proposition \ref{prop:preserve_lox}, $\Psi_K(x)\ne1$ 
for 
sufficiently large $K$.
 \item If $x$ has finite order, then $x\not\in DT_K$ since each element of 
$DT_K$ has infinite order. This follows by transfinite induction using 
Theorem~\ref{theo;statement_that_should_have_been_in_D_PS}.(2),(4).

 \item The remaining possibility is that $x$ is reducible.  In this case, there 
exists $n>0$, depending only on $(g,p)$, so that $x^n$ stabilises a simple closed curve 
$\gamma$ of $\Sigma_{g,p}$.  Let $U,V$ be the components of 
$\Sigma_{g,p}\setminus\gamma$ (if $\gamma$ is non-separating, we take $V=\emptyset$ and $U=\Sigma_{g,p}\setminus\gamma$).  
Let $H\leq MCG(\Sigma_{g,p})$ be the stabiliser 
of $\gamma$.  Let $W=U\sqcup V$, which is a (possibly disconnected) subsurface 
of $\Sigma_{g,p}$ of complexity strictly less than that of $\Sigma_{g,p}$.

The action of $H$ on $W$ gives an exact sequence $1\to Z\to 
H\stackrel{\phi}{\longrightarrow} \widehat A$, 
where $Z$ is central in $H$ and is the cyclic subgroup generated by the Dehn 
twist about $\gamma$, and $\widehat A$ has a finite-index subgroup $A$ such 
that $A\leq MCG(U)$ if $V=\emptyset$ and $A\leq MCG(U)\times MCG(V)$ otherwise.

Let $H'\leq H$ be the finite-index subgroup 
$H'=\phi^{-1}(A)$.  Let $\Psi_K^U:MCG(U)\to G_K(U)$ be the quotient obtained by 
killing 
$K^{th}$ powers of Dehn twists in $MCG(U)$, and define $\Psi_K^V:MCG(V)\to 
G_K(V)$ analogously (if $V\neq\emptyset$).  Let $\Psi_K':A\to\overline A$ be the 
restriction of $\Psi_K^U$ to $A$ if $V=\emptyset$ and the restriction 
of $\Psi_K^U\times\Psi_K^V$ to $A$ otherwise.

Define a homomorphism $\bar \phi:\Psi_K(H')\to \overline A$ by 
$\bar\phi(\Psi_K(g))=\Psi_K'(\phi(g))$.  To see that this is well-defined, it 
suffices to show that $\Psi_K'(\phi(n))=1$ whenever $n\in DT_K\cap H'$.  By 
Proposition~\ref{prop:stab}, we have $n=yq$, where $y$ is a power of the Dehn 
twist about $\gamma$ and $q$ is the product of powers of Dehn twists about 
curves in $W$.  Moreover, $\phi(q)=\phi(yq)=\phi(n)\in A$, 
since $n\in H'$.  Hence $\phi(n)$ is the product of powers of Dehn twists in 
$U$ and $V$, and lies in $A$, so $\Psi'_K(\phi(n))$ is defined and 
$\Psi'_K(\phi(n))=1$, as required.

Choose $m\geq 1$ so that $x^{mn}\in H'$.  Write $x^{mn}=zw$, where $z\in Z$ and 
$w$ is supported on $W$, with either $x^{mn}=z$ or $\phi(w)\neq 1$.  

If $x^{mn}=z$, then $\Psi_K(x^{mn})=\Psi_K(z)\neq 1$ for sufficiently large 
$K$, by Proposition~\ref{prop:stab}.

Otherwise, $\phi(x^{mn})=\phi(w)\neq 1$.  Moreover, $\phi(w)\in A$, since 
$\phi(w)=\phi(x^{mn})$ and $x^{mn}\in H'$.  Hence either $\phi(w)=a\in MCG(U)$ 
(if $V=\emptyset$) or $\phi(w)=(a,b)\in MCG(U)\times MCG(V)$.  In either case, we 
can assume $a\neq 1$, so by induction on complexity, $\Psi_K^U(a)\neq 1$, and 
hence $\Psi_K'(\phi(w))\neq 1$, for all sufficiently large $K$, as required. 
 But 
$\Psi_K'(\phi(w))=\bar\phi(\Psi_K(x^{mn}))$, so $\Psi_K(x^{mn})\neq 1$, and 
hence $\Psi_K(x)\neq 1$.

(In the base case, $U$ is an annulus or pair of pants.  When $U$ is a pair of 
pants, $\Psi_K$ is the identity.  When $U$ is an annulus, $MCG(U)$ has a 
finite-index normal subgroup generated by a single Dehn twist, and the lemma 
clearly holds.)
\end{itemize}

By the Nielsen-Thurston classification, any $x$ is of one of the above three 
types, so the lemma holds.
\end{proof}

This, and Theorem~\ref{thm:mcg_quotients}, are already sufficient to prove:

\begin{cor}\label{cor:res_hyp}
 Suppose $(g,p)\in\{(0,4),(0,5),(1,0),(1,1),(1,2)\}$. Then $MCG(\Sigma_{g,p})$ is fully residually non-elementary 
hyperbolic.
\end{cor}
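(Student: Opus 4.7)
The plan is to combine Lemma~\ref{lem:first_quotient_survive} with Theorem~\ref{thm:mcg_quotients} directly. Fix $(g,p)\in\{(0,4),(0,5),(1,0),(1,1),(1,2)\}$, and let $F\subseteq MCG(\Sigma_{g,p})\setminus\{1\}$ be an arbitrary finite subset. We need to produce a surjection from $MCG(\Sigma_{g,p})$ onto a non-elementary hyperbolic group which is injective on $F\cup\{1\}$.

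First, I would apply Lemma~\ref{lem:first_quotient_survive} to each $x\in F$ in turn: for each such $x$, the lemma provides a threshold $K(x)$ such that $\Psi_K(x)\neq 1$ whenever $K\in K_0\mathbb{Z}$ exceeds $K(x)$. Simultaneously, Theorem~\ref{thm:mcg_quotients} provides a threshold $K'$ such that for every multiple $K$ of $K_0$ with $K\ge K'$, the quotient group $G(g,p,K)=MCG(\Sigma_{g,p})/DT_K$ is non-elementary hyperbolic (in the low-complexity cases directly, and in the two middle-complexity cases via the relatively hyperbolic structure whose peripherals are themselves hyperbolic).

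Since $F$ is finite, I can choose a single $K\in K_0\mathbb{Z}$ which simultaneously exceeds $K'$ and each of the finitely many thresholds $K(x),\ x\in F$. Then the quotient map $\Psi_K:MCG(\Sigma_{g,p})\to G(g,p,K)$ is surjective by construction, $G(g,p,K)$ is non-elementary hyperbolic by the choice $K\ge K'$, and $\Psi_K(x)\neq 1$ for every $x\in F$ by the choice $K\ge K(x)$. This exhibits the required surjection and completes the verification of full residual non-elementary hyperbolicity.

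There is no serious obstacle: the corollary is essentially a bookkeeping consequence of the two previous results, since the "sufficiently large $K$" condition in Lemma~\ref{lem:first_quotient_survive} depends on the individual element and can be upgraded to hold uniformly across any finite collection. The only mildly subtle point worth mentioning explicitly is that full residual finiteness allows the test set $F$ to be arbitrary, so one genuinely needs both the element-by-element survival statement of Lemma~\ref{lem:first_quotient_survive} and the non-elementary hyperbolicity of the \emph{same} quotient $G(g,p,K)$ supplied by Theorem~\ref{thm:mcg_quotients}; these two ingredients fit together precisely because both conditions are of the form "for all sufficiently large multiples of $K_0$".
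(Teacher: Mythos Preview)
Your proposal is correct and follows essentially the same approach as the paper: combine Theorem~\ref{thm:mcg_quotients} (giving non-elementary hyperbolicity of $G(g,p,K)$ for all sufficiently large $K\in K_0\mathbb Z$) with Lemma~\ref{lem:first_quotient_survive} (each nontrivial element survives in $G(g,p,K)$ for all sufficiently large $K$), and take $K$ large enough for the finitely many conditions at once. One cosmetic remark: you phrase the requirement as ``injective on $F\cup\{1\}$'', but what is actually needed (and what you verify) is the weaker condition $\Psi_K(x)\neq 1$ for each $x\in F$.
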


\begin{proof}
By Theorem~\ref{thm:mcg_quotients}, $G(g,p,K)$ is non-elementary 
hyperbolic, and by Lemma~\ref{lem:first_quotient_survive}, any finite subset of $MCG(\Sigma_{g,p})\setminus\{1\}$ is 
mapped injectively to $G(g,p,K)$ for all sufficiently large $K$.
\end{proof}

For convenience, whenever $g,p$ are fixed, we write $G_K$ to mean $G(g,p,K)$.  We now study images of convex-cocompact 
subgroups  in $G_K$.  

\begin{prop}\label{prop:convex_cocompact}
Let $(g,p)$ be as in Section~\ref{subsec:rel_hyp_quotient}, let $K_0,K$ be as in Theorem~\ref{thm:mcg_quotients}, and let 
$(G_K,\mathcal H)$ be the relatively hyperbolic structure from Theorem~\ref{thm:mcg_quotients}.  Let $Q\le 
MCG(\Sigma_{g,p})$ be a convex-cocompact subgroup.  Then:
\begin{enumerate}
     \item \label{item:injective} For all sufficiently large $K$, the quotient map $\Psi_K$ is injective on $Q$.
     \item\label{item:strongly_quasiconvex} For all sufficiently large $K$, $\Psi_K(Q)$ is strongly relatively quasiconvex 
in $(G_K,\mathcal H)$, and hence quasiconvex in $G_K$.
     \item\label{item:keep_out} For all $x\in MCG(\Sigma_{g,p})\setminus Q$ and all sufficiently large $K$, we have 
$\Psi_K(x)\not\in\Psi_K(Q)$.
\end{enumerate}
\end{prop}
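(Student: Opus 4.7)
My plan starts from a single geometric input: because $Q\le MCG(\Sigma_{g,p})$ is convex-cocompact, there exists a constant $L=L(Q)$ such that for every pair $q_1,q_2\in Q$ and every curve $s$ for which $d_s(q_1 x_0,q_2 x_0)$ is defined, we have $d_s(q_1 x_0,q_2 x_0)\le L$. (This is the standard ``uniformly bounded subsurface projections'' characterisation of convex-cocompact subgroups of mapping class groups.) Throughout I will assume $K$ is large enough that $\Theta_{rot}(K)/10>L$, imposing further lower bounds on $K$ in individual steps.

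For (\ref{item:injective}), I will split the non-trivial elements of $Q$ by Nielsen--Thurston type. Every infinite-order element of $Q$ is pseudo-Anosov by convex-cocompactness, and every torsion element is excluded from $DT_K$ by the second bullet in the proof of Lemma~\ref{lem:first_quotient_survive}. The uniform bound $L$ lets Proposition~\ref{prop:preserve_lox} apply simultaneously to every pseudo-Anosov $q\in Q$, so $\Psi_K(q)$ is loxodromic on $X_K$ and in particular nontrivial in $G_K$ once $K$ is sufficiently large.

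For (\ref{item:strongly_quasiconvex}), the image $\Psi_K(Q)$ is finitely generated as a quotient of $Q$. The projection bound $L<\Theta_{rot}/10$ makes Lemma~\ref{lift;segments_witout_angles} applicable to every pair of $Q$-orbit points, so each geodesic $[q_1 x_0,q_2 x_0]\subset\mathcal{C}(\Sigma_{g,p})$ projects isometrically into $X_K$. The orbit map $\Psi_K(Q)\to X_K$ therefore inherits the quasi-isometric embedding property from the orbit map $Q\to\mathcal{C}(\Sigma_{g,p})$. Composing with the $G_K$-equivariant quasi-isometry $\Cay(G_K,\mathcal{S}\cup\bigcup\mathcal{H})\to X_K$ supplied by Theorem~\ref{thm:mcg_quotients} yields strong relative quasiconvexity of $\Psi_K(Q)$ in $(G_K,\mathcal{H})$, and hyperbolicity of $G_K$ upgrades this to ordinary quasiconvexity.

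For (\ref{item:keep_out}), I argue by contradiction: assume there is a sequence $K_i\to\infty$ in $K_0\mathbb{Z}$ and decompositions $x=q_i n_i$ with $q_i\in Q$ and $n_i\in DT_{K_i}$. If $\{q_i\}$ takes only finitely many values, pigeonhole produces a fixed $q\in Q$ with $q^{-1}x\ne 1$ lying in $DT_{K_i}$ for infinitely many $i$, contradicting Lemma~\ref{lem:first_quotient_survive}. The main obstacle is the remaining subcase, in which, after passing to a subsequence, $q_i\to\infty$ in the word metric on $Q$.

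To dispose of that subcase I will bound the subsurface projections $d_s(x_0,n_i x_0)$ uniformly in $i$ and $s$. Equivariance gives $d_s(x_0,n_i x_0)=d_{q_i s}(q_i x_0,xx_0)$, and inserting the fixed orbit point $q_1 x_0$ yields
\[
d_{q_i s}(q_i x_0,xx_0)\le d_{q_i s}(q_i x_0,q_1 x_0)+d_{q_i s}(q_1 x_0,xx_0)+\kappa
\]
via the approximate triangle inequality. The first summand is at most $L$ by convex-cocompactness; the second is controlled by applying Lemma~\ref{lem:more_proper} to the fixed pair $(q_1 x_0,xx_0)$, which cuts the projection to $\theta+2\kappa$ off a finite exceptional set of curves, on which the projection is trivially bounded by a constant depending only on $x$, $Q$, and $x_0$. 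Thus $d_s(x_0,n_i x_0)\le M$ for some $M=M(x,Q,x_0)$ independent of $i$ and $s$. Taking $K_i$ large enough that $\Theta_{rot}(K_i)/10>M$, Lemma~\ref{lift;segments_witout_angles} forces $d_{X_{K_i}}(\bar x_0,n_i\bar x_0)=d_{\mathcal{C}(\Sigma_{g,p})}(x_0,n_i x_0)$. The right-hand side tends to infinity, because $n_i x_0=q_i^{-1}(xx_0)$ escapes every bounded set as $q_i\to\infty$ in the non-elementary convex-cocompact group $Q$, whereas the left-hand side is zero since $\Psi_{K_i}(n_i)=1$. This contradiction completes (\ref{item:keep_out}).
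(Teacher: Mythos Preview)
Your argument is correct in its essentials and uses the same underlying inputs as the paper: the uniform bound on subsurface projections along $Q$--orbits, Lemma~\ref{lift;segments_witout_angles} for isometric projection of orbit geodesics, and Lemma~\ref{lem:first_quotient_survive} to kill finitely many bad elements. Parts~\eqref{item:injective} and~\eqref{item:strongly_quasiconvex} match the paper almost exactly (the paper derives injectivity directly from the isometric projection of $[v_0,hv_0]$ rather than splitting by Nielsen--Thurston type, but your version is arguably more explicit about torsion).

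For part~\eqref{item:keep_out}, however, your Case~2 is both more complicated than necessary and has a technical wrinkle. When you insert $q_1x_0$ into the triangle inequality at $q_is$, you implicitly assume $q_1x_0\in\Act(q_is)$; in the composite projection system formalism the triangle inequality for $d^\pi_{q_is}$ is only stated for triples in $\Act(q_is)\setminus\{q_is\}$, so this step needs a patch when $q_1x_0$ and $q_is$ are disjoint. (There is a related issue with citing Lemma~\ref{lem:more_proper}, which is stated for $x,z$ in the same $\bbY_i$.) These are fixable, but you can avoid them entirely.

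The paper sidesteps Case~2 by using what you already proved in~\eqref{item:strongly_quasiconvex}: the orbit map $\Psi_K(Q)\to X_K$ is a quasi-isometric embedding with constants \emph{independent of $K$}. If $\Psi_{K_i}(x)=\Psi_{K_i}(q_i)$, then $d_{X_{K_i}}(\bar x_0,\Psi_{K_i}(q_i)\bar x_0)\le d_{\mathcal C}(x_0,xx_0)=\Delta$, so $|q_i|_Q$ is bounded by a constant depending only on $\Delta$ and the (uniform) QI constants. This immediately forces $\{q_i\}$ to be finite, eliminating Case~2 and reducing to your Case~1 argument. I would recommend replacing your projection--bounding manoeuvre with this observation.
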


\begin{proof}
Consider the action of $Q$ on $\mathcal C(\Sigma_{g,p})$ arising as the restriction of the action of $MCG(\Sigma_{g,p})$.   
Fix a vertex $v_0\in\mathcal C(\Sigma_{g,p})$.  Fix $h\in Q$, 
and let $s$ be a curve.  Then $d_s(v_0,hv_0)\leq C$ whenever the quantity is defined, for some uniform constant $C$: this is 
contained in the proof of~\cite[Theorem 7.4]{KentLeininger:convex_cocompact}, see \cite[Lemma 5.1]{DurhamTaylor:stability}. 
Choosing $K$ sufficiently 
large (in terms of $C$) and applying 
Lemma~\ref{lift;segments_witout_angles} implies that geodesics $[v_0,hv_0]\in\mathcal C(\Sigma_{g,p})$ map isometrically to 
geodesics in $X_K$ joining the images 
of $v_0,hv_0$.  This implies assertion~\eqref{item:injective}.  

Now, fix a finite generating set $\mathcal Y$ of $Q$.  Since geodesics $[v_0,hv_0]$ map isometrically to geodesics in $X_K$, 
we see that the orbit map $\Psi_K(h)\mapsto \Psi_K(h)v_0$ is a quasi-isometric embedding $\Psi_K(Q)\to X_K$ whose constants 
depend on $\mathcal Y$ but are independent of $K$.

By Theorem~\ref{thm:mcg_quotients}, $X_K$ is  $G_K$--equivariantly quasi-isometric to $Cay(G_K,\mathcal S\cup\mathcal 
H)$ for any finite relative generating set $\mathcal S$, so $\Psi_K(Q)\to Cay(G_K,\mathcal S\cup\mathcal 
H)$ is a quasi-isometric embedding.  Choosing $\mathcal S$ to be a 
finite generating set of $G_K$, we can pull back the quasi-isometric embedding $Q\to Cay(G_K,\mathcal S\cup\mathcal 
H)$ under the Lipschitz map $Cay(G_K,\mathcal S)\to Cay(G_K,\mathcal S\cup\mathcal 
H)$ to get a quasi-isometric embedding $Q\to Cay(G_K,\mathcal S)$, proving that $\Psi_K(Q)$ is quasiconvex in the hyperbolic 
group $G_K$.  This proves assertion~\eqref{item:strongly_quasiconvex}.  

Let $x\in MCG(\Sigma_{g,p})\setminus Q$.  Given $K$, let $\bar v_0$ be the image of $v_0$ in $X_K$.  Let 
$\Delta=d_{\mathcal C(\Sigma_{g,p})}(v_0,xv_0)$.  Then there exists $\Delta'$, depending only on 
$\Delta$ and the generating set of $Q$, such that for all sufficiently large $K$, the set of $h\in Q$ such that 
$d_{X_K}(\Psi_K(h)\bar v_0,\bar v_0)\leq\Delta$ is contained in the set $\{h_i\}$ of elements of $Q$ of word-length at most 
$\Delta'$.  This is because any orbit map $Q\to \Psi_K(Q)\to X_K$ is a quasi-isometric embedding with constants independent 
of $K$.

Suppose that $\Psi_K(x)\in \Psi_K(Q)$.  Since $d_{X_K}(\Psi_K(x)\bar v_0,\bar v_0)\le\Delta,$ we have 
$\Psi_K(xh_i^{-1})=1$ for some $i$.  For sufficiently large $K$, the 
map $\Psi_K$ is injective on $Q$, so $x=h_i^{-1}$, contradicting that $x\not\in Q$.  This proves 
assertion~\eqref{item:keep_out}.
\end{proof}

\begin{thm}
 Assume that all hyperbolic groups are residually finite.  Let 
$(g,p)\in\{(0,4),(0,5), (1,0),(1,1),(1,2) \}$.  Then any convex-cocompact subgroup $Q<MCG(\Sigma_{g,p})$ is 
separable in $MCG(\Sigma_{g,p})$.
\end{thm}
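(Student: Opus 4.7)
The plan is to reduce separability in $MCG(\Sigma_{g,p})$ to separability of a quasiconvex subgroup in the hyperbolic quotients $G_K = MCG(\Sigma_{g,p})/DT_K$, and then apply the QCERF conclusion of Agol--Groves--Manning \cite{AGM} under the residual finiteness hypothesis. Concretely, fix an element $x \in MCG(\Sigma_{g,p}) \setminus Q$; we must produce a finite quotient of $MCG(\Sigma_{g,p})$ in which the image of $x$ avoids the image of $Q$.

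First, I would choose $K$ a sufficiently large multiple of $K_0$ so that Proposition~\ref{prop:convex_cocompact} applies to both $Q$ and the chosen $x$. This simultaneously yields: (i) $\Psi_K|_Q$ is injective, (ii) $\Psi_K(Q)$ is quasiconvex in $G_K$, and (iii) $\Psi_K(x) \notin \Psi_K(Q)$. For each $(g,p)$ in our list, Theorem~\ref{thm:mcg_quotients} guarantees that $G_K$ is a hyperbolic group (either directly, in the low-complexity cases, or because $G_K$ is hyperbolic relative to hyperbolic peripherals). Thus $G_K$ is a hyperbolic group containing the quasiconvex subgroup $\Psi_K(Q)$ and an element $\Psi_K(x)$ outside it.

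Now I would invoke the hypothesis. Since every hyperbolic group is assumed to be residually finite, \cite{AGM} implies that every hyperbolic group is QCERF, so in particular $\Psi_K(Q)$ is separable in $G_K$. Hence there is a finite group $F$ and a surjection $\pi: G_K \to F$ with $\pi(\Psi_K(x)) \notin \pi(\Psi_K(Q))$. The composition $\pi \circ \Psi_K : MCG(\Sigma_{g,p}) \to F$ is then a finite quotient that separates $x$ from $Q$, and since $x$ was arbitrary, this proves $Q$ is separable.

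The substantive work has been pushed into Proposition~\ref{prop:convex_cocompact}: verifying injectivity on $Q$, quasiconvexity of $\Psi_K(Q)$, and the separation condition $\Psi_K(x) \notin \Psi_K(Q)$ in the quotient. Once those three facts are available, the present theorem is essentially a formal consequence, with the only additional ingredient being the \cite{AGM} implication from residual finiteness to QCERF. There is no genuine obstacle at this stage, provided one correctly chooses $K$ large enough to accommodate both the fixed subgroup $Q$ and the chosen element $x$ simultaneously in Proposition~\ref{prop:convex_cocompact}.
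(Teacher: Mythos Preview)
Your proposal is correct and follows essentially the same route as the paper's proof: fix $x\notin Q$, choose $K$ large enough via Proposition~\ref{prop:convex_cocompact} and Theorem~\ref{thm:mcg_quotients} so that $G_K$ is hyperbolic with $\Psi_K(Q)$ quasiconvex and $\Psi_K(x)\notin\Psi_K(Q)$, then apply \cite{AGM} to get QCERF and pull back a finite separating quotient. The only minor difference is that you explicitly record injectivity of $\Psi_K|_Q$, which the paper's proof does not invoke since it is not needed once conclusions (ii) and (iii) of Proposition~\ref{prop:convex_cocompact} are in hand.
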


\begin{proof}
Let $G_K,\Psi_K$ and $X_K$ be as in Section~\ref{subsec:rel_hyp_quotient}.  Fix $x\in MCG(\Sigma_{g,p})\setminus Q$.  Using 
Theorem~\ref{thm:mcg_quotients} and Proposition~\ref{prop:convex_cocompact}, we can choose $K$ so that $G_K$ is 
hyperbolic, $\Psi_K(Q)$ is quasiconvex in $G_K$, and $\Psi_K(x)\not\in\Psi_K(Q)$.  If every hyperbolic group is residually 
finite, then by~\cite[Theorem 0.1]{AGM}, for any hyperbolic group, all of its quasiconvex subgroups 
are separable.  In particular, $G_K$ has a finite quotient separating $x$ from $\Psi_K(Q)$.  Hence there is a finite 
quotient 
of $MCG(\Sigma_{g,p})$ separating $x$ from $Q$, as required.
\end{proof}

\bibliographystyle{alpha}
\bibliography{quotients_MCG}
\end{document}